\documentclass[reqno,12pt]{amsart}
\textwidth138mm \textheight222mm
\newcommand{\RR}{{\mathbb R}}
\newcommand{\CC}{{\mathbb C}}

\newcommand{\NN}{{\mathbb N}}

\newcommand{\mcS}{\mathcal{S}}
\newcommand{\Rp}{\mathrm{Re}}
\def\bege{\begin{equation}} \def\ende{\end{equation}}
\def\begr{\begin{eqnarray}} \def\endr{\end{eqnarray}}
\usepackage{amsmath}
\usepackage{amssymb}
\usepackage{cite}

\usepackage{bbm}
\usepackage{amsmath}
\usepackage{txfonts}
\usepackage{stmaryrd}
\usepackage{amssymb}
\usepackage{amsfonts}
\usepackage{times}
\usepackage{mathrsfs}

\usepackage{amsthm}
\usepackage{enumerate}

\usepackage{framed}
\usepackage{lipsum}
\usepackage{color}


\def\BB{ \mathbb{B}}
\def\SS{ \mathbb{S}}
\def\CC{ \mathbb{C}}
\newcommand{\DD}{{\mathbb D}}
\def\B{\mathcal{B}}
\def\R{\mathcal{R}}
\def\I{\mathcal{I}}
\def\T{\mathcal{T}}
\def\HT{\mathscr{T}}
\def\D{\mathbb{D}}
\def\N{\mathbb N}

\def\hD{\hat{\mathcal{D}}}
\def\dD{\mathcal{D}}
\def\a{\alpha}

\def\vp{\varphi}
\def\om{\omega}

\def\p{{\prime}}
\def\Waw{{W_{\alpha}^\om}}
\def\Wav{\Waw}
\def\begr{\begin{eqnarray}} \def\endr{\end{eqnarray}}


\def\msk{\medskip}
\def\ol{\overline}
\newtheorem{Lemma}{Lemma}
\newtheorem{Theorem}{Theorem}

\newtheorem{Proposition}{Proposition}

\begin{document}
\title[  Toeplitz operators on $A_\om^2(\BB)$ with $\om\in\hD$ ]{  Toeplitz operators on Bergman spaces induced by  doubling weights on the unit ball of $\mathbb{C}^n$}

 \author{ Juntao Du and    Songxiao Li$\dagger$ }

 \address{Juntao Du\\ Faculty of Information Technology, Macau University of Science and Technology, Avenida Wai Long, Taipa, Macau.}
 \email{jtdu007@163.com  }

 \address{Songxiao Li\\ Institute of Fundamental and Frontier Sciences, University of Electronic Science and Technology of China,
 610054, Chengdu, Sichuan, P.R. China. } \email{jyulsx@163.com}

 \subjclass[2000]{30H10, 47B33 }
 \begin{abstract}
 The boundedness and compactness of Toeplitz operator from $A_\omega^p$ to $A_\omega^q$ with  doubling weights $\omega$ are studied in this paper.   The characterizations of Schatten class  Toeplitz operators and Volterra operators on $A_\omega^2$  are also investigated.
 \thanks{$\dagger$ Corresponding author.}
 \vskip 3mm \noindent{\it Keywords}:  Weighted Bergman  space, doubling weight, Toeplitz operator, Schatten class.
 \end{abstract}
 \maketitle

\section{Introduction}
Let $\BB$ be the open unit ball of $\CC^n$ and $\SS$   the boundary of $\BB$. When $n=1$,  $\BB$ is  the open unit disk in the complex plane $\mathbb{C}$ and always denoted by $\D$. Let $H(\BB)$ denote the space of all holomorphic functions on $\BB$. For any two points
$z=(z_1,z_2,\cdots,z_n)\,\mbox{ and } \, w=(w_1,w_2,\cdots,w_n)$ in $\CC^n$, we define $\langle z,w \rangle=z_1\overline{w_1}+\cdots+z_n\overline{w_n}$ and
$$|z|=\sqrt{\langle z,z \rangle}=\sqrt{|z_1|^2+\cdots+|z_n|^2}.$$

 Let $d\sigma$  and $dV$ be the normalized Lebesgue surface and volume measures on $\SS$ and $\BB$, respectively.
For $0<p< \infty$, the  Hardy space $H^p(\BB) $(or $H^p$) is the space consisting of all functions $f\in H(\BB)$ such that
$$\|f\|_{H^p}=\sup_{0<r<1}M_p(r,f)<\infty,$$ where
$$M_p(r,f)=\left(\int_{\SS}|f(r\xi)|^pd\sigma(\xi)\right)^\frac{1}{p}, \,\,\mbox{ when }\,\,0<p<\infty.$$
 $H^\infty$ is the space consisting of all $f\in H(\BB)$ such that $\|f\|_{H^\infty}=\sup_{z\in\BB}|f(z)|<\infty.$

For any $f\in H(\BB)$, let $\Re f(z)$ be the radial derivative of $f$,  that is,
$$\Re f(z)=\sum_{k=1}^n z_k\frac{\partial f}{\partial z_k}(z),\,\,\,\,z=(z_1,z_2,\cdots,z_n)\in\BB.$$
The Bloch space $\B(\BB)$ consists of all $f\in H(\BB)$ such that
$$ \|f\|_{\B(\BB)}=|f(0)|+\sup_{z\in\BB}(1-|z|^2)|\Re f(z)|<\infty.$$
When $n=1$, $\|\cdot\|_{\B(\D)}$ is a little different from the norm defined in   classical way, see \cite{zhu} for example, but they are equivalent.
We  keep $\B$ as the abbreviation of  $\B(\BB)$.

Suppose $\om$ is a radial weight ( i.e., $\om$ is a positive, measurable and  integrable  function on $[0,1)$ and $\om(z)=\om(|z|)$ for all $z\in\BB$).  Let $\hat{\om}(r)=\int_r^1\om(t)dt$.
We say that
\begin{itemize}
  \item $\om$ is a doubling weight, denoted by $\om\in \hD$,  if there exists   $C>0$ such that
$$\hat{\om}(r)<C\hat{\om}(\frac{1+r}{2}) ,\,\,\mbox{ when } 0\leq r<1;$$
  \item  $\om$ is a regular weight,  denoted by  $\om\in    \R $, if there exists $C>0$ and $\delta\in(0,1)$ such that
$$\frac{1}{C}<\frac{\hat{\om}(r)}{(1-r)\om(r)}<C,\,\mbox{ when }\, r\in(\delta,1);$$
 \item  $\om$ is a rapidly increasing weight, denoted by  $\om\in\I$, if
$$\lim_{r\to 1} \frac{\hat{\om}(r)}{(1-r)\om(r)}=\infty;$$
  \item $\om$ is a reverse doubling weight, denoted by $\om\in\check{\dD}$, if
 there exist $K>1$ and $C=C(\om)>1$ such that
\begin{align}\label{0420-1}
\hat{\om}(r)\geq C\hat{\om}(1-\frac{1-r}{K}),\,\,\,\,r\in(0,1).
\end{align}
\end{itemize}

The regular weight is a natural extension of the classical weight $(1-r^2)^\alpha(\alpha>-1)$.
The rapidly increasing weight was introduced by Pel\'aez and R\"atty\"a in \cite{PjaRj2014book}.
 The doubling weight, which was introduced in  \cite{Pja2015},  is  the extension of the regular weight and the rapidly increasing weight.
   See  \cite{Pja2015,PjaRj2014book}  for more details about $\I,\R$ and  $\hD$.
Let $\dD=\hD\cap\check{\dD}$. It is easy to check that $\R\subset\dD$.
If $\om\in\dD$, let $K_\om$ be the infimum  of the $K$ such that (\ref{0420-1}) holds.
By Lemma 1.1 in \cite{PjaRj2014book},  $K_\om=1$ if $\om$ is continuous and regular.
More information about $\check{\dD}$ and $\dD$ can be seen in \cite{PjRj2019arxiv,KtPjRj2018arxiv}.

Suppose $\mu$ is  a positive Borel measure on $\BB$ and $0<p<\infty$. The Lebesgue space $L^p(\BB,d\mu)$ (or $L_\mu^p$, for brief) consists of all measurable
complex functions $f$ on $\BB$ such that $|f|^p$ is integrable with respect to $\mu$, that is, $f\in L^p(\BB,d\mu)$ if and only if
$$\|f\|_{L^p(\BB,d\mu)}=\left(\int_\BB |f(z)|^pd\mu(z)\right)^\frac{1}{p}<\infty.$$
$L^\infty(\BB,d\mu)$ (or $L_\mu^\infty$)  consists of all measurable complex functions $f$ on $\BB$ such that $f$ is essential bounded, that is, $f\in L^\infty(\BB,d\mu)$ if and only if
$$\|f\|_{L^\infty(\BB,d\mu)}=\inf_{E\subset\BB,\mu(E)=0}\sup_{z\in\BB\backslash E}|f(z)|<\infty.$$
More details about $L^p(\BB,d\mu)$ can be seen in \cite{Rw1980,Zk2005}. If  $\om\in \hD$,  letting $d\mu(z)=\om(z)dV(z)$,  $\mu$ is a Borel measure on $\BB$.
 Then, we will write $L^p(\BB,d\mu)$ as $L^p(\BB,\om dV)$ or $L_\om^p$.
When $n=1$ and $z\in\D$,  $dV(z)$ is  the normalized Lebesgue area measure on $\D$, i.e., $dV(z)=\frac{1}{\pi}dA(z)$. Then we can define
the corresponding Lebesgue spaces on the unit disk in the same way.

In \cite{PjaRj2014book}, J. Pel\'aez and J. R\"atty\"a introduced a new class function spaces $A_\om^p(\D)$, the weighted Bergman spaces induced by   rapidly increasing weights $\om$ in $\D$. That is
$$A_\om^p(\DD)=L^p(\D,\om dA)\cap H(\D),\,\,0<p<\infty.$$
See \cite{Pja2015,PjaRj2014book,PjaRj2015,PjaRj2016,PjRj2016jmpa,PjaRjSk2015mz,PjaRjSk2018jga} for more results on $A_\om^p(\D)$ with $\om\in\hD$.  In \cite{DjLsLxSy2019arxiv}, we extended the Bergman space $A_\om^p(\D)$  with $\om\in\hD$ to the unit ball $\BB$ of $\CC^n$.   That is
$$A_\om^p(\BB)=L^p(\BB,\om dV)\cap H(\BB),\,\,0<p<\infty.$$
For brief, let $A_\om^p=A_\om^p(\BB)$. As a subspace of $L^p(\BB,\om dV)$, the norm on $A_\om^p$ will be written as $\|\cdot\|_{A_\om^p}$.
 It is easy to check that $A_\om^p$ is a Banach space when $p\geq 1$ and a complete metric space with the distance $\rho(f,g)=\|f-g\|_{A_\om^p}^p$ when $0<p<1$.

When $\alpha>-1$  and $c_\a=\frac{\Gamma(n+\a+1)}{\Gamma(n+1)\Gamma(\a+1)}$, if $\om(z)=c_\alpha(1-|z|^2)^\alpha$, the space $A_\om^p$ becomes the classical weighted Bergman space $A_\alpha^p$, and we write $dV_\alpha(z)=c_\alpha(1-|z|^2)^\alpha dV(z)$.  When $\alpha=0$, $A^p_0=A^p$ is the standard Bergman space.
See \cite{Rw1980,Zk2005} for the theory of $H^p$ and $A_\alpha^p$.

Let  $\om_s=\int_0^1 r^s\om(r)dr$ and
$$B_z^\om(w)=\frac{1}{2n!}\sum_{k=0}^\infty \frac{(n-1+k)!}{k!\om_{2n+2k-1}} \langle w,z\rangle^k.$$
In \cite{DjLsLxSy2019arxiv2}, we proved that,  for any $f\in L^1(\BB,\om dV)$,
\begin{align*}
f(z)=\langle f, B_z^\om \rangle_{A_\om^2}=\int_\BB f(w)\ol{B_z^\om(w)}\om(w)dV(w).
\end{align*}
So, $B_z^\om$ is called the reproducing kernel of $A_\om^2$. More results about $B_z^\om$ can be seen in \cite{DjLsLxSy2019arxiv2}.

Assume that $\mu$ is a positive Borel measure on $\BB$. The Toeplitz operator  associated with $\mu$ is defined by
$$\T_\mu f (z)=\int_\BB f(\xi)\ol{B_z^\om(\xi)}d\mu(\xi),$$
and the Berezin transform  of $\T_\mu$ is defined by
$$\widetilde{\T_\mu}(z)=\frac{\langle \T_\mu B_z^\om, B_z^\om \rangle_{A_\om^2}}{\|B_z^\om\|_{A_\om^2}^2}.$$
Generally, the Berezin transform of a linear operator $T:A_\om^p\to A_\om^q$ is
$$\widetilde{T}(z)=\frac{\langle T B_z^\om, B_z^\om \rangle_{A_\om^2}}{\|B_z^\om\|_{A_\om^2}^2}.$$

Since 1970s, there are a lot of works focused on Toeplitz operators, see  \cite{Cla1973iumj,MgSc1979iumj,Zk1988jop}.
On the unit disk,  in \cite{PjaRj2014book, PjaRj2016}, Pel\'aez and R\"atty\"a completely characterized the Schatten class Toeplitz on $A_\om^2$ with $\om\in\hD$.
In \cite{PjaRjSk2018jga},  the authors investigate  the boundedness and compactness of $\T_\mu: A_\om^p\to A_\om^q$ with $\om\in\R$.
On the unit ball, Zhu studied the Schatten class Toeplitz operators on $A_\alpha^2$ in \cite{Zk2007nyjm}. Paul and Zhao described the boundedness and compactness of $\T_\mu:A_\alpha^p\to A_\beta^q$ in \cite{PjZr2015mmj}.

Motivated by \cite{PjaRj2014book, PjaRjSk2018jga}, we study the Toeplitz operator on Bergman spaces induced by doubling weights in the unit ball of $\CC^n$.
The paper is organized as follows. In section 2, we give some lemmas which   will be used later. In section 3, we study the boundedness and compactness of  $\T_\mu:A_\om^p\to A_\om^q$ with $\om\in\dD$. In section 4, we introduce a new kind of Dirichlet spaces induced by doubling weights  and investigate the Schatten class  Toeplitz operators on these Dirichlet spaces.   As an application, we get two characterizations of Schatten class  Toeplitz operators on $A_\om^2$.
In section 5, using the  characterizations of Schatten class  Toeplitz operators on $A_\om^2$, we describe the  Schatten class  Volterra integral operator $T_g$ on $A_\om^2$.

Throughout this paper, the letter $C$ will denote  constants and may differ from one occurrence to the other.
The notation $A \lesssim B$ means that there is a positive constant C such that $A\leq CB$.
The notation $A \approx B$ means $A\lesssim B$ and $B\lesssim A$.\msk

 \section{ Preliminary results}
 In this section, we   introduce some notations and some results obtained  in \cite{DjLsLxSy2019arxiv,DjLsLxSy2019arxiv2}.
For any $\xi,\tau\in\overline{\BB}$, let $d(\xi,\tau)=|1-\langle \xi,\tau\rangle|^\frac{1}{2}$.
 Then $d(\cdot,\cdot)$ is a nonisotropic metric.
For $r>0$   and $\xi\in\SS$, let
  $$Q(\xi,r)=\{\eta\in \SS: d(\xi,\eta) \leq r \}.$$
 $Q(\xi,r)$  is a nonisotropic metric ball in $\SS$ for all $\xi\in \SS$ and $r\in(0,1)$.
More information about $d(\cdot,\cdot)$ and $Q(\xi,r)$ can be found in \cite{Rw1980,Zk2005}.

For any $a\in\BB\backslash\{0\}$, let $Q_a=Q({a}/{|a|},\sqrt{1-|a|})$ and
$$S_a=S(Q_{a})=\left\{z\in\BB:\frac{z}{|z|}\in Q_{a},|a|<|z|<1\right\}.$$
For convenience, if $a=0$, let $Q_a=\SS$ and $S_a=\BB$.  We call $S_a$ the Carleson block.  As usual, for a measurable set $E\subset\BB$, $\om(E)=\int_E \om(z)dV(z)$.

For any radial weight $\om$,  its associated weight $\om^*$  is defined by
\begin{align*}\om^*(z)=\int_{|z|}^1 \om(s)\log\frac{s}{|z|}sds, \,\,z\in\D\backslash\{0\}.\end{align*}

Now we state some lemmas which will be used in this paper.

\begin{Lemma}\label{0507-1}
Suppose $\om$ is a radial weight.
\begin{enumerate}[(i)]
  \item The following statements are equivalent.
  \begin{enumerate}[(a)]
    \item  $\om\in\hD$;
    \item  $\om^*(r)\approx (1-r)\int_r^1 \om(t)dt$ as $r\to 1$;
    \item for all $x\geq 1$, $\int_0^1 s^x\om(s)ds\approx \hat{\om}(1-\frac{1}{x})$;
    \item there is a constant $b>0$ such that $\frac{\hat{\om}(t)}{(1-t)^b}$ is essentially increasing.
  \end{enumerate}
  \item $\om\in\check{\dD}$ if and only if there is a constant $a>0$ such that $\frac{\hat{\om}(t)}{(1-t)^a}$ is essentially decreasing.
  \item If $\om$ is continuous, then $\om\in \R$ if and only if  there are  $-1<a<b<+\infty$ and $\delta\in [0,1)$, such that
  \begin{align}\label{0515-1}\frac{{\om}(t)}{(1-t)^b} \nearrow\infty,
\,\,\mbox{ and }\,\,\frac{{\om}(t)}{(1-t)^a}\searrow 0,\,\,\mbox{ when }\,\,\delta\leq t<1 .
\end{align}
\end{enumerate}
\end{Lemma}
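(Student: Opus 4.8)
The plan is to establish the three equivalences in Lemma \ref{0507-1} largely by reducing everything to elementary estimates on $\hat\om$ and, for part (iii), on $\om$ itself. Throughout I will freely use that $\hat\om$ is nonincreasing and absolutely continuous with $\hat\om\,'=-\om$ a.e.

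For part (i), I would prove the cycle $(a)\Rightarrow(c)\Rightarrow(d)\Rightarrow(a)$ and then treat $(a)\Leftrightarrow(b)$ separately. For $(a)\Rightarrow(c)$: split $\int_0^1 s^x\om(s)\,ds$ at $s=1-\frac1x$; on $[0,1-\frac1x]$ bound $s^x\le e^{-1}\cdot(\text{something})$ more carefully one gets $\int_0^{1-1/x}s^x\om(s)\,ds\lesssim \hat\om(1-\frac1x)$ using that $s^x$ decays and iterating the doubling condition $\hat\om(r)\le C\hat\om(\tfrac{1+r}2)$ to control $\hat\om$ on dyadic-type subintervals of $[0,1-\frac1x]$; on $[1-\frac1x,1]$ use $s^x\le 1$ to get the trivial upper bound $\hat\om(1-\frac1x)$, while the lower bound $\int_0^1 s^x\om(s)\,ds\gtrsim\hat\om(1-\frac1x)$ follows since $s^x\gtrsim 1$ on $[1-\frac1x,1]$ (as $(1-\frac1x)^x$ is bounded below) and $\int_{1-1/x}^1\om = \hat\om(1-\frac1x)$. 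For $(c)\Rightarrow(d)$: the quantity $\int_0^1 s^x\om(s)\,ds$ is, up to constants, a smoothed version of $\hat\om(1-\frac1x)$, and the monotonicity of $x\mapsto (1-t)^{-b}\int_0^1 s^x\om(s)\,ds$-type expressions forces $\hat\om(t)/(1-t)^b$ to be essentially increasing for suitable small $b>0$; concretely, from $(c)$ and the submultiplicativity one extracts a constant $C$ with $\hat\om(r)\le C\hat\om(r')$ whenever $1-r\le 2(1-r')$, which is exactly the statement that $\hat\om(t)/(1-t)^b$ is essentially increasing for $b=\log_2 C$. For $(d)\Rightarrow(a)$: if $\hat\om(t)/(1-t)^b$ is essentially increasing then $\hat\om(r)\le C(1-r)^b(1-\tfrac{1+r}2)^{-b}\hat\om(\tfrac{1+r}2)=C2^b\hat\om(\tfrac{1+r}2)$, i.e.\ $\om\in\hD$. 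Finally $(a)\Leftrightarrow(b)$: writing $\om^*(z)=\int_{|z|}^1\om(s)\log\frac{s}{|z|}s\,ds$ and using $\log\frac{s}{|z|}\approx \frac{s-|z|}{|z|}$ near the boundary, one integrates by parts to get $\om^*(r)\approx\int_r^1\hat\om(s)\,ds$; then $\om\in\hD$ is equivalent to $\int_r^1\hat\om(s)\,ds\approx(1-r)\hat\om(r)$ by a standard argument (the doubling condition gives $\int_r^1\hat\om\le\sum_k\int_{r_k}^{r_{k+1}}\hat\om\lesssim\sum_k (1-r)2^{-k}\hat\om(r)\lesssim(1-r)\hat\om(r)$ on dyadic intervals $r_k=1-(1-r)2^{-k}$, and the reverse is trivial since $\hat\om$ is nonincreasing).

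For part (ii), this is the exact mirror of $(a)\Leftrightarrow(d)$. If $\om\in\check\dD$, then $\hat\om(r)\ge C\hat\om(1-\frac{1-r}{K})$, and iterating along $1-(1-r)K^{-j}$ gives $\hat\om(1-(1-r)K^{-j})\le C^{-j}\hat\om(r)$; choosing $a=\log_K C>0$ this says precisely that $\hat\om(t)/(1-t)^a$ is essentially decreasing. Conversely, if $\hat\om(t)/(1-t)^a$ is essentially decreasing with constant $C$, then $\hat\om(1-\frac{1-r}{K})\le C K^{-a}\hat\om(r)$, and picking $K$ large enough that $CK^{-a}<1$ yields \eqref{0420-1}. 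This is short.

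For part (iii), assume $\om$ continuous. The $\Leftarrow$ direction: if $\om(t)/(1-t)^b\nearrow\infty$ and $\om(t)/(1-t)^a\searrow 0$ on $[\delta,1)$ with $-1<a<b$, integrate these monotonicity statements to compare $\hat\om(r)=\int_r^1\om$ with $(1-r)\om(r)$: from $\om(s)\le \om(r)(1-s)^a(1-r)^{-a}$ for $s\ge r$ we get $\hat\om(r)\le \frac{(1-r)\om(r)}{a+1}$, and from $\om(s)\ge \om(r)(1-s)^b(1-r)^{-b}$ we get $\hat\om(r)\ge\frac{(1-r)\om(r)}{b+1}$, giving $\hat\om(r)\approx(1-r)\om(r)$, i.e.\ $\om\in\R$. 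The $\Rightarrow$ direction is the substantive one: given $\om\in\R$, so $\hat\om(r)\approx(1-r)\om(r)$ on $(\delta,1)$, one shows there exist $0<a+1$ and $b$ with the stated monotonicity. Since $\hat\om\,'=-\om$, the relation $\hat\om\approx(1-r)\om$ becomes a differential inequality $\frac{c_1}{1-r}\le -\frac{\hat\om\,'(r)}{\hat\om(r)}=\frac{\om(r)}{\hat\om(r)}\le\frac{c_2}{1-r}$, which upon integrating gives $(1-r)^{c_2}\lesssim\hat\om(r)\lesssim(1-r)^{c_1}$; feeding this back into $\om\approx\hat\om/(1-r)$ gives $\om(r)\approx(1-r)^{-1}\hat\om(r)$ squeezed between powers, and then choosing $b>c_2-1$ and $-1<a<c_1-1$ one verifies $\om(t)(1-t)^{-b}$ and $\om(t)(1-t)^{-a}$ have the required strict monotonicity after possibly enlarging $\delta$ — this last verification uses $\frac{d}{dt}\log\big(\om(t)(1-t)^{-b}\big)=\frac{\om\,'(t)}{\om(t)}+\frac{b}{1-t}$ and the bound on $\om\,'/\om$ that follows from $\om\in\R$, together with a mollification step since $\om$ is only assumed continuous, not differentiable.

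The main obstacle I anticipate is exactly this last point in (iii)$\Rightarrow$: passing from the integrated bound $\om(r)\approx(1-r)^{\text{power}}$ (which only pins down $\om$ up to a bounded factor) to genuine \emph{monotonicity} of $\om(t)(1-t)^{-b}$, since a bounded oscillating factor can destroy monotonicity. The resolution is that one does not need $\om$ itself but may replace it by an equivalent weight (or use that $\om\in\R$ already forces enough regularity of $\om$, e.g.\ a local Lipschitz-type control of $\log\om$, coming from $\hat\om\approx(1-r)\om$ applied at two nearby points); this is presumably why the hypothesis "$\om$ is continuous" is imposed, and I expect the authors' proof to invoke a known lemma from \cite{PjaRj2014book} or \cite{Pja2015} to handle it cleanly rather than redoing the regularization by hand.
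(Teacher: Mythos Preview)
The paper does not prove this lemma at all: it simply cites Lemmas~A and~B of \cite{PjaRjSk2018jga} for parts (i) and (ii), and observation (v) of Lemma~1.1 in \cite{PjaRj2014book} for part (iii). Your proposal is therefore not competing with an argument in the paper but reconstructing what those references contain.

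For parts (i) and (ii) your sketches are correct and are essentially the standard proofs one finds in the cited sources. The only quibble is that your $(c)\Rightarrow(d)$ is really $(c)\Rightarrow(a)\Rightarrow(d)$ in disguise (you recover the doubling inequality from $(c)$ and then iterate it), which is fine but should be said explicitly. The integration-by-parts route $\om^*(r)\approx\int_r^1\hat\om(s)\,ds$ and the dyadic summation for $(a)\Leftrightarrow(b)$ are correct, as is all of part (ii).

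For part (iii), your $\Leftarrow$ direction is correct. For $\Rightarrow$ there is a genuine gap, and the repair you propose does not close it. Your differential inequality controls $-\hat\om'/\hat\om$, hence $\hat\om$ (which is $C^1$), but yields no pointwise control on $\om'/\om$ since $\om$ is only assumed continuous. Mollification does not help either: it produces a smooth weight \emph{comparable} to $\om$, but strict monotonicity of the smoothed version does not transfer back to $\om$ itself. Concretely, take $\om(t)=(1-t)^\alpha\bigl(2+\sin\tfrac{1}{1-t}\bigr)$ with $\alpha>-1$. This weight is continuous and one checks $\hat\om(r)\approx(1-r)^{\alpha+1}\approx(1-r)\om(r)$, so $\om\in\R$ in the sense defined in this paper; yet $\om(t)/(1-t)^b$ is never eventually monotone for any $b$, since its derivative is dominated near $t=1$ by the sign-changing factor $\cos\tfrac{1}{1-t}$. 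Thus, with the definition of $\R$ adopted here, the forward implication in (iii) as literally stated cannot be proved from scratch. What observation (v) of Lemma~1.1 in \cite{PjaRj2014book} actually gives is the equivalence for the class $\R$ as defined \emph{there}, where conditions of the form \eqref{0515-1} are taken as part of (or immediately equivalent to) the definition; the present paper is simply importing that result. Your closing paragraph anticipated exactly this.
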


\begin{proof}
By Lemmas A and B in \cite{PjaRjSk2018jga}, {\it (i)} and {\it (ii)} holds. By observation {\it (v)} of Lemma 1.1 in \cite{PjaRj2014book}, {\it (iii)} holds.
\end{proof}

\begin{Lemma}\label{1210-3}
Assume that $\om\in\hD$. Then  the following statements hold.
\begin{enumerate}[(i)]
    \item For any $\alpha>-2$, $(1-t)^\alpha\om^*(t)\in\R$;
\item  $\om(S_a)\approx (1-|a|)^n\int_{|a|}^1 \om(r)dr$;
\item $\hat{\om}(z)\approx\hat{\om}(a)$, if $1-|z|\approx 1-|a|$.
\end{enumerate}
\end{Lemma}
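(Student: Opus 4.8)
The three parts all reduce, via the characterizations of $\hD$ in Lemma \ref{0507-1}, to elementary computations; the genuine content lies in {\it (i)}. I would begin with {\it (iii)}. Assume $1-|z|\approx1-|a|$ and, without loss of generality, $|z|\le|a|$, so that $1-|a|\le1-|z|\lesssim1-|a|$. Since $\hat\om$ is positive and decreasing, $\hat\om(z)=\hat\om(|z|)\ge\hat\om(|a|)=\hat\om(a)$. Conversely, Lemma \ref{0507-1} {\it (i)(d)} furnishes $b>0$ for which $t\mapsto\hat\om(t)/(1-t)^{b}$ is essentially increasing, whence
$$\hat\om(|z|)=\frac{\hat\om(|z|)}{(1-|z|)^{b}}\,(1-|z|)^{b}\lesssim\frac{\hat\om(|a|)}{(1-|a|)^{b}}\,(1-|z|)^{b}=\hat\om(a)\left(\frac{1-|z|}{1-|a|}\right)^{b}\lesssim\hat\om(a);$$
this settles {\it (iii)}.

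For {\it (ii)}, I would pass to polar coordinates, writing $z=r\xi$ with $r\in(|a|,1)$ and $\xi\in Q_a$. Using $\om(r\xi)=\om(r)$,
$$\om(S_a)=2n\,\sigma(Q_a)\int_{|a|}^{1}r^{2n-1}\om(r)\,dr,\qquad Q_a=Q\!\left(\frac{a}{|a|},\sqrt{1-|a|}\right).$$
The standard estimate for the surface measure of nonisotropic balls (see \cite{Rw1980}) gives $\sigma(Q_a)\approx(1-|a|)^{n}$ (for $a=0$ both quantities are comparable to $1$). Moreover $\int_{|a|}^{1}r^{2n-1}\om(r)\,dr\approx\int_{|a|}^{1}\om(r)\,dr=\hat\om(a)$: the bound $\lesssim$ is immediate since $r^{2n-1}\le1$, while $\gtrsim$ follows by distinguishing $|a|\ge1/2$ (where $r^{2n-1}\ge2^{1-2n}$ on $[|a|,1]$) from $|a|<1/2$ (where both sides are comparable to the positive constant $\hat\om(1/2)$, because $\hat\om(|a|)\in[\hat\om(1/2),\hat\om(0)]$). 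Combining these relations gives $\om(S_a)\approx(1-|a|)^{n}\hat\om(a)$.

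The core of the lemma is {\it (i)}. Set $w(t)=(1-t)^{\alpha}\om^{*}(t)$ for $t\in[0,1)$. Since $\om\in\hD$, Lemma \ref{0507-1} {\it (i)(b)} gives $\om^{*}(t)\approx(1-t)\hat\om(t)$ as $t\to1$, hence $w(t)\approx(1-t)^{\alpha+1}\hat\om(t)$ near $1$. First I would verify that $w$ is a legitimate radial weight: it is positive and measurable; near $0$ it is integrable because $\om^{*}(t)\le\hat\om(0)\log(1/t)$ and $(1-t)^{\alpha}$ is bounded there; near $1$ it is integrable because $\hat\om$ is bounded and $\alpha+1>-1$ — precisely the point where the hypothesis $\alpha>-2$ is used. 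Then, for $r$ near $1$,
$$\hat w(r)\approx\int_{r}^{1}(1-t)^{\alpha+1}\hat\om(t)\,dt.$$
Since $\hat\om$ is decreasing and $\alpha+2>0$, the right-hand side is at most $\frac{1}{\alpha+2}\hat\om(r)(1-r)^{\alpha+2}$; for the reverse inequality, the essential monotonicity of $t\mapsto\hat\om(t)/(1-t)^{b}$ from Lemma \ref{0507-1} {\it (i)(d)} gives
$$\int_{r}^{1}(1-t)^{\alpha+1}\hat\om(t)\,dt\gtrsim\frac{\hat\om(r)}{(1-r)^{b}}\int_{r}^{1}(1-t)^{\alpha+1+b}\,dt\approx(1-r)^{\alpha+2}\hat\om(r).$$
Hence $\hat w(r)\approx(1-r)^{\alpha+2}\hat\om(r)\approx(1-r)\,w(r)$ as $r\to1$, i.e.\ $\hat w(r)/\left((1-r)w(r)\right)\approx1$ near $1$, which is exactly the defining condition for $w\in\R$.

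The main obstacle is {\it (i)}: one must certify that $(1-t)^{\alpha}\om^{*}(t)$ is genuinely integrable on $[0,1)$ — where the hypothesis $\alpha>-2$ is indispensable and where a careless argument could fail for $-2<\alpha\le-1$ — and then establish the two-sided estimate for $\hat w$, whose lower half is the only place where the doubling property of $\om$ (through Lemma \ref{0507-1} {\it (i)(d)}, equivalently $\hat\om(r)\lesssim\hat\om(\frac{1+r}{2})$) genuinely enters. Parts {\it (ii)} and {\it (iii)} are routine by comparison.
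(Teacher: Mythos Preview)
Your proof is correct. The paper's own proof merely cites external references (Lemma 1.7 in \cite{PjaRj2014book} for {\it (i)}, Lemma 2 in \cite{DjLsLxSy2019arxiv} for {\it (ii)}, and Lemma \ref{0507-1} for {\it (iii)}); your argument unpacks these directly and is essentially what those references contain, so the approaches coincide.
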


\begin{proof}
By Lemma 1.7 in \cite{PjaRj2014book}, {\it (i)} holds. By Lemma 2 in \cite{DjLsLxSy2019arxiv}, {\it (ii)} holds.
Using Lemma \ref{0507-1}, it is easy to check that {\it (iii)} holds.
\end{proof}

The following two lemmas are Lemmas 3 and 4 in \cite{DjLsLxSy2019arxiv2}, respectively.

\begin{Lemma}\label{0313-1} Suppose $\om\in\hD$. Then,
$$\|B_z^\om\|_{\B}\approx\frac{1}{\om(S_z)}\approx \|B_z^\om\|_{H^\infty},\,\,z\in\BB.$$
\end{Lemma}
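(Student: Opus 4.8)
The plan is to prove the three-way equivalence by sandwiching everything between $\|B_z^\om\|_{H^\infty}$ and $\frac{1}{\om(S_z)}$, since the Bloch norm will automatically sit in between once we control the radial derivative. First I would record the explicit series
$$B_z^\om(w)=\frac{1}{2n!}\sum_{k=0}^\infty \frac{(n-1+k)!}{k!\,\om_{2n+2k-1}}\langle w,z\rangle^k,$$
and note that $\|B_z^\om\|_{H^\infty}=B_z^\om(z\cdot|z|/|z|)$-type estimates reduce to the one real variable sum $\sum_k \frac{(n-1+k)!}{k!\,\om_{2n+2k-1}}|z|^{2k}$ because $|\langle w,z\rangle|\le |z|$ with equality attained along $w=z/|z|\cdot t$. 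Using Lemma \ref{0507-1}(i)(c), the moments satisfy $\om_{2n+2k-1}\approx\hat\om\!\big(1-\tfrac{1}{2n+2k-1}\big)\approx\hat\om\!\big(1-\tfrac1k\big)$ for large $k$, so the tail of the series behaves like $\sum_k k^{n-1}|z|^{2k}/\hat\om(1-1/k)$. A standard splitting of this sum at $k\approx \frac{1}{1-|z|}$, together with Lemma \ref{0507-1}(i)(d) (that $\hat\om(t)/(1-t)^b$ is essentially increasing) to sum the tail and the trivial bound $\hat\om(1-1/k)\ge \hat\om(|z|)$-type monotonicity for the head, gives $\sum_k k^{n-1}|z|^{2k}/\hat\om(1-1/k)\approx \frac{1}{(1-|z|)^n\hat\om(|z|)}$. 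By Lemma \ref{1210-3}(ii) the right side is $\approx \frac{1}{\om(S_z)}$, which yields $\|B_z^\om\|_{H^\infty}\approx \frac{1}{\om(S_z)}$.

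Next I would handle the Bloch norm. The lower bound $\|B_z^\om\|_{\B}\gtrsim \|B_z^\om\|_{H^\infty}$ is not literally true pointwise, so instead I would argue $\|B_z^\om\|_{\B}\gtrsim \frac{1}{\om(S_z)}$ directly: evaluating $|B_z^\om(0)|+\sup_w(1-|w|^2)|\Re B_z^\om(w)|$, the term $(1-|w|^2)|\Re B_z^\om(w)|$ at $w$ near $z$ already forces a lower bound of the right order, since $\Re B_z^\om(w)=\sum_k \frac{k(n-1+k)!}{k!\,\om_{2n+2k-1}}\langle w,z\rangle^k$ and choosing $w=z$ introduces an extra factor $k$, again summable by the same head/tail split to $\approx \frac{1}{(1-|z|)^{n+1}\hat\om(|z|)}$, so after multiplying by $(1-|z|^2)$ we recover $\frac{1}{\om(S_z)}$. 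For the upper bound $\|B_z^\om\|_{\B}\lesssim \|B_z^\om\|_{H^\infty}$, I would use the elementary fact that for any $h\in H(\BB)$ one has $(1-|w|^2)|\Re h(w)|\lesssim \|h\|_{H^\infty}$ (a Schwarz–Pick/Cauchy-estimate argument on the slice disk through $w$), applied to $h=B_z^\om$; combined with $|B_z^\om(0)|=\frac{1}{2n!\,\om_{2n-1}}\lesssim \|B_z^\om\|_{H^\infty}$ this closes the loop.

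Assembling: $\frac{1}{\om(S_z)}\lesssim \|B_z^\om\|_{\B}\lesssim \|B_z^\om\|_{H^\infty}\lesssim \frac{1}{\om(S_z)}$, so all three quantities are comparable. I expect the main obstacle to be the asymptotic evaluation of the kernel series, i.e. proving $\sum_{k\ge 0}\frac{(n-1+k)!}{k!\,\om_{2n+2k-1}}|z|^{2k}\approx \frac{1}{(1-|z|)^n\hat\om(|z|)}$ uniformly in $z$; this is where the doubling hypothesis enters essentially, through the two-sided control of $\om_{2n+2k-1}$ by $\hat\om(1-1/k)$ in Lemma \ref{0507-1}(i)(c) and the essential monotonicity of $\hat\om(t)/(1-t)^b$ in part (i)(d). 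Everything else — the Cauchy estimate for the Bloch upper bound, the value of $B_z^\om(0)$, and the translation $\frac{1}{(1-|z|)^n\hat\om(|z|)}\approx\frac{1}{\om(S_z)}$ via Lemma \ref{1210-3}(ii) — is routine once that estimate is in hand.
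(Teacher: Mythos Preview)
Your approach is correct; the paper itself does not prove this lemma but imports it from \cite{DjLsLxSy2019arxiv2}, and the series estimate $\sum_k\frac{(n-1+k)!}{k!\,\om_{2n+2k-1}}r^k\approx\frac{1}{(1-r)^n\hat\om(r)}$ you sketch is precisely what the paper later invokes as ``the proof of Lemma~\ref{0313-1}'' (see the proofs of Lemmas~\ref{0413-1} and~\ref{0627-1}). One cosmetic slip: the $H^\infty$ supremum is attained as $\langle w,z\rangle\to|z|$, so the relevant sum carries $|z|^k$ rather than $|z|^{2k}$, but since $1-|z|\approx 1-|z|^2$ and $\hat\om(|z|)\approx\hat\om(|z|^2)$ by Lemma~\ref{1210-3}(iii) this does not affect the asymptotics.
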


\begin{Lemma}\label{0313-2}
Let $0<p<\infty$ and  $\om\in\hD$. Then the following assertions hold.
\begin{enumerate}[(i)]
  \item When $|rz|>\frac{1}{4}$, then
     $$M_p^p(r, B_z^\om) \approx \int_{0}^{r|z|} \frac{1}{\hat{\om}(t)^p (1-t)^{np-n+1}}dt,$$
     and
     $$M_p^p(r, \Re B_z^\om) \approx \int_{0}^{r|z|} \frac{1}{\hat{\om}(t)^p (1-t)^{(n+1)p-n+1}}dt.$$
  \item If $\upsilon\in\hD$, when $|z|>\frac{6}{7}$, then
  $$\|B_z^\om\|_{A_\upsilon^p}^p \approx   \int_{0}^{|z|} \frac{\hat{\upsilon}(t)}{\hat{\om}(t)^p (1-t)^{np-n+1}}dt,$$
  and
  $$  \|\Re B_z^\om\|_{A_\upsilon^p}^p\approx  \int_0^{|z|}  \frac{\hat{\upsilon}(t)}{\hat{\om}(t)^p (1-t)^{(n+1)p-n+1}}dt.$$
\end{enumerate}
\end{Lemma}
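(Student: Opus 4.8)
The plan is to push everything down to one complex variable. Since $B_z^\om(w)$ depends on $w$ and $z$ only through $\langle w,z\rangle$, write
$$B_z^\om(w)=G(\langle w,z\rangle),\qquad G(\lambda)=\frac{1}{2n!}\sum_{k\ge 0}\frac{(n-1+k)!}{k!\,\om_{2n+2k-1}}\,\lambda^k,$$
a fixed holomorphic function on $\D$ with positive Taylor coefficients $\gamma_k$; since $\Re_w\langle w,z\rangle=\langle w,z\rangle$, one also has $\Re B_z^\om(w)=H(\langle w,z\rangle)$ with $H(\lambda)=\lambda G'(\lambda)=\sum_{k\ge1}k\gamma_k\lambda^k$. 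Using unitary invariance to put $z=|z|e_1$ and the slice-integration formula $\int_\SS\Phi(\langle\xi,z\rangle)\,d\sigma(\xi)=(n-1)\int_\D\Phi(|z|\lambda)(1-|\lambda|^2)^{n-2}\,\frac{dA(\lambda)}{\pi}$ (valid for $n\ge2$), applied to $\Phi(u)=|G(ru)|^p$, one gets, with $s=r|z|$,
$$M_p^p(r,B_z^\om)=(n-1)\int_\D|G(s\lambda)|^p(1-|\lambda|^2)^{n-2}\,\frac{dA(\lambda)}{\pi}=2(n-1)\int_0^1\rho(1-\rho^2)^{n-2}M_p^p(s\rho,G)\,d\rho,$$
and the same with $H$ in place of $G$ for $\Re B_z^\om$; when $n=1$ one simply has $M_p^p(r,B_z^\om)=M_p^p(s,G)$ and $M_p^p(r,\Re B_z^\om)=M_p^p(s,H)$. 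Thus the ball estimates reduce to one-variable integral means of $G$ and $H$ (the classical Bergman weight $(1-|\lambda|^2)^{n-2}$ appearing in the slice formula is regular and harmless).

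The coefficients are pinned down as follows. By Lemma \ref{0507-1}(i)(c), $\om_{2n+2k-1}\approx\hat\om\bigl(1-\tfrac1{2n+2k-1}\bigr)$ for $k\ge1$, and since $1-\tfrac1{2n+2k-1}$ and $1-\tfrac1k$ are at comparable distances from $\SS$, Lemma \ref{1210-3}(iii) upgrades this to $\om_{2n+2k-1}\approx\hat\om(1-\tfrac1k)$; with $\frac{(n-1+k)!}{k!}\approx k^{n-1}$ we get $\gamma_k\approx k^{n-1}/\hat\om(1-1/k)$ and $k\gamma_k\approx k^{n}/\hat\om(1-1/k)$. The convenient reformulation is $k^{m}/\hat\om(1-1/k)=1/\widehat{\nu_m}(1-1/k)$, where $\nu_m$ is the radial weight with $\widehat{\nu_m}(t)=\hat\om(t)(1-t)^m$; for $m\ge0$ this is an honest weight ($\nu_m(t)=\om(t)(1-t)^m+m\hat\om(t)(1-t)^{m-1}\ge0$), and $\nu_m\in\hD$ since $\widehat{\nu_m}$ inherits the doubling condition from $\hat\om$. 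So $G$ has, up to bounded factors, the Taylor coefficients $1/\widehat{\nu_{n-1}}(1-1/k)$ and $H$ those of $1/\widehat{\nu_{n}}(1-1/k)$; in particular, by the doubling of $\widehat{\nu_m}$, these coefficients are essentially constant on each dyadic block $2^j\le k<2^{j+1}$.

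The one substantial input is a one-variable integral-means estimate: if $\Phi(\lambda)=\sum_k c_k\lambda^k$ has positive coefficients with $c_k\approx 1/\widehat\nu(1-1/k)$ for some $\nu\in\hD$, then for $0<p<\infty$,
$$M_p^p(\rho,\Phi)\approx\int_0^{\rho}\frac{dt}{\widehat\nu(t)^p(1-t)^p},\qquad \rho>\tfrac14$$
(this is of the type established in \cite{PjaRj2014book}; for $p\ne2$, where orthogonality is unavailable, one cuts $\Phi$ into dyadic polynomial blocks — on which $c_k$ is essentially a constant — and sums their contributions against the radius, using that $t\mapsto\widehat\nu(t)/(1-t)^b$ is essentially increasing for some $b>0$, Lemma \ref{0507-1}(i)). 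Granting it, for $n=1$ it gives the two formulas of (i) immediately (with $\nu=\om$, resp. $\nu=\nu_1$). For $n\ge2$, apply it with $\nu=\nu_{n-1}$ inside the slice reduction, interchange the $\rho$- and $t$-integrations, and simplify using $1-(t/s)^2\approx 1-t$ on the effective range of $t$ together with the doubling of $\hat\om$ and Lemma \ref{1210-3}(iii); this produces
$$M_p^p(r,B_z^\om)\approx\int_0^{r|z|}\frac{(1-t)^{n-1}\,dt}{\bigl[\hat\om(t)(1-t)^{n-1}\bigr]^p(1-t)^p}=\int_0^{r|z|}\frac{dt}{\hat\om(t)^p(1-t)^{np-n+1}},$$
and $\nu=\nu_n$ gives the $\Re B_z^\om$ formula with $(n+1)p-n+1$ replacing $np-n+1$; this is (i). Finally (ii) follows from (i): write $\|B_z^\om\|^p_{A^p_\upsilon}=2n\int_0^1 s^{2n-1}\upsilon(s)M_p^p(s,B_z^\om)\,ds$, substitute (i), exchange the order of integration, and use $\int_{t/|z|}^1 s^{2n-1}\upsilon(s)\,ds\approx\hat\upsilon(t/|z|)\approx\hat\upsilon(t)$ for $|z|>\tfrac67$ (Lemma \ref{1210-3}(iii)), and similarly for $\Re B_z^\om$. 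I expect the one-variable integral-means estimate for $p\ne2$ — genuinely handling the cancellation in the power series and summing the dyadic blocks against the radius — to be the only real difficulty; the slice reduction, the coefficient asymptotics, and the passage from (i) to (ii) are routine bookkeeping with the weight regularity from Lemmas \ref{0507-1} and \ref{1210-3}.
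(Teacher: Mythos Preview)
The paper does not prove this lemma; it is imported verbatim as Lemma~4 of the companion paper \cite{DjLsLxSy2019arxiv2}, so there is no in-paper proof to compare against. Your outline is the natural route and is presumably what that companion paper does: reduce to one variable via the slice formula, pin down the coefficient growth $\gamma_k\approx k^{n-1}/\hat\om(1-1/k)$ using Lemma~\ref{0507-1}(i)(c) and Stirling, invoke the Pel\'aez--R\"atty\"a one-variable integral-means estimate for power series with positive, dyadically slowly varying coefficients, and then integrate radially to pass from (i) to (ii). The identification $k^m/\hat\om(1-1/k)=1/\widehat{\nu_m}(1-1/k)$ with $\nu_m\in\hD$ is a clean way to feed the problem back into the known one-variable machinery.

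Two places where your wording overstates pointwise what is really only an integral equivalence. First, in the slice step the claim ``$1-(t/s)^2\approx 1-t$ on the effective range'' is false for $t$ near $s$; what is true is that after splitting at $t=2s-1$ the two integrals $\int_0^s (1-t/s)^{n-1}\,\hat\om(t)^{-p}(1-t)^{-np}\,dt$ and $\int_0^s (1-t)^{n-1}\,\hat\om(t)^{-p}(1-t)^{-np}\,dt$ match piecewise (on $[0,2s-1]$ the pointwise comparison does hold; on $[2s-1,s]$ both contributions are $\approx \hat\om(s)^{-p}(1-s)^{-(np-n)}$). Second, in (ii) the claim ``$\hat\upsilon(t/|z|)\approx\hat\upsilon(t)$'' likewise fails for $t$ near $|z|$; again split at $t=2|z|-1$ and use $\int_a^1\hat\upsilon(u)\,du\approx(1-a)\hat\upsilon(a)$ on the tail. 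Also note that the one-variable estimate is only valid for $s\rho>1/4$, so in the slice integral the small-$\rho$ portion must be treated as a bounded remainder, which is harmless since the target integral in (i) is bounded below for $r|z|>1/4$. These are bookkeeping issues, not strategic gaps.
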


To study the compactness of a linear operator, we need the following  lemma which can be obtained in a standard way.
\begin{Lemma}\label{1210-2}
Suppose that $0<p, q<\infty, \om\in \hD$ and $\mu$ is a positive Borel measure on $\BB$. If $T:A_\om^p\to L_\mu^q$ is linear and bounded, then T is compact if and only if
whenever $\{f_k\}$  is bounded in $A_\om^p$ and $f_k\to 0$ uniformly on compact subsets of $\BB$,
$\lim\limits_{k\to\infty}\|Tf_k\|_{L_\mu^q} =0.$
\end{Lemma}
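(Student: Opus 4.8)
The plan is to prove the two implications separately, following the by-now standard normal-families argument adapted to the $A_\om^p$ setting. The only structural facts I need are (a) point evaluations $f\mapsto f(z)$ are bounded on $A_\om^p$ with a local bound uniform on compact subsets of $\BB$, and (b) the closed ball of $A_\om^p$ is a normal family, so that a bounded sequence has a subsequence converging uniformly on compact subsets to some $g\in H(\BB)$; both are immediate from the growth estimate $|f(z)|\lesssim \|f\|_{A_\om^p}/\om(S_z)^{1/p}$ (which follows from the subharmonicity of $|f|^p$ together with Lemma \ref{1210-3}(ii)), exactly as in the classical Bergman case. I will use these freely.

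For the easy direction, suppose $T$ is compact. Let $\{f_k\}$ be bounded in $A_\om^p$, say $\|f_k\|_{A_\om^p}\le M$, with $f_k\to 0$ uniformly on compact subsets of $\BB$. Suppose toward a contradiction that $\|Tf_k\|_{L_\mu^q}\not\to 0$; then there is $\e>0$ and a subsequence with $\|Tf_{k_j}\|_{L_\mu^q}\ge\e$. By compactness of $T$, after passing to a further subsequence we may assume $Tf_{k_j}\to h$ in $L_\mu^q$ for some $h$, so $\|h\|_{L_\mu^q}\ge\e$. On the other hand, for each fixed $z\in\BB$ the functional $f\mapsto (Tf)(z)$... — here I should be slightly careful, since the target is $L_\mu^q$, not a space of holomorphic functions, so I cannot talk about pointwise values of $Tf$ directly. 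Instead I argue: since $f_{k_j}\to 0$ uniformly on compacta and is norm-bounded, $f_{k_j}\to 0$ weakly in $A_\om^p$ when $p>1$ (test against the dense set of kernels, using the reproducing formula), hence $Tf_{k_j}\to 0$ weakly in $L_\mu^q$; combined with the norm convergence $Tf_{k_j}\to h$ this forces $h=0$, contradicting $\|h\|_{L^q_\mu}\ge \e$. For $p\le 1$ the weak-convergence step is replaced by the following cleaner observation that in fact works for all $p$: any subsequence of $\{f_k\}$ has a further subsequence converging uniformly on compacta to $0$ (the limit is $0$ since $f_k\to0$ pointwise), and then by compactness a further subsequence has $Tf_{k_j}$ convergent in $L^q_\mu$; one shows the limit must be $0$ by testing against continuous compactly supported functions on $\BB$ and using that $T$ is the given integral-type operator, or more simply by noting that convergence in $L^q_\mu$ plus the a priori bound forces the limit to agree with the uniform-on-compacta limit $\mu$-a.e. on every compact set. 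Either way, every subsequence of $\|Tf_k\|_{L^q_\mu}$ has a further subsequence tending to $0$, so the whole sequence does.

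For the converse, suppose the sequential condition holds; I must show $T$ maps the closed unit ball $\B_{A_\om^p}$ of $A_\om^p$ to a relatively compact subset of $L_\mu^q$. Since $L_\mu^q$ is a complete metric space, it suffices to show every sequence $\{Tf_k\}$ with $f_k\in\B_{A_\om^p}$ has a convergent subsequence; when $q\ge1$ I show it is Cauchy after passing to a subsequence, when $q<1$ the same with $d(u,v)=\|u-v\|_{L^q_\mu}^q$. By normality of $\B_{A_\om^p}$, extract a subsequence $f_{k_j}\to g$ uniformly on compact subsets of $\BB$; the local evaluation bound shows $g\in A_\om^p$ with $\|g\|_{A_\om^p}\le1$, and then $h_j:=f_{k_j}-g$ is bounded in $A_\om^p$ (by $2$, say) and $h_j\to0$ uniformly on compacta. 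The hypothesis gives $\|Th_j\|_{L_\mu^q}\to0$, i.e. $Tf_{k_j}\to Tg$ in $L_\mu^q$. Hence $\{Tf_k\}$ has a convergent subsequence, $T(\B_{A_\om^p})$ is relatively compact, and $T$ is compact.

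The main obstacle — really the only point requiring care rather than routine bookkeeping — is the rigorous handling of the easy direction when $p\le1$ (and the parallel point for $q\le 1$), where one cannot invoke duality/weak convergence and must instead justify directly that an $L^q_\mu$-limit of a uniformly-on-compacta-null sequence is $0$; the clean way is to observe that $\mu$-convergence along a subsequence implies $\mu$-a.e.\ convergence along a further subsequence, which must then coincide with the pointwise limit $0$. All remaining steps (the growth estimate, normality, completeness of $L^q_\mu$, the subsequence-of-every-subsequence trick to upgrade to convergence of the full sequence) are standard.
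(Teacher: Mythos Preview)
The paper gives no proof of this lemma; it only says the result ``can be obtained in a standard way.'' Your argument is the expected normal-families one, and the implication (sequential criterion $\Rightarrow$ compact) is correct. One cosmetic remark: that the locally uniform limit $g$ satisfies $\|g\|_{A_\om^p}\le 1$ follows from Fatou's lemma applied to $\int_\BB |f_{k_j}|^p\om\,dV$, not from the point-evaluation estimate as you write.

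The reverse implication (compact $\Rightarrow$ sequential criterion) has a genuine gap once $0<p\le 1$ or $0<q<1$. Your weak-convergence argument is correct when $p>1$ and $q\ge 1$. But for the remaining range you assert that $L_\mu^q$-convergence of $Tf_{k_j}$ to $h$, after passing to a $\mu$-a.e.\ convergent subsequence, ``must coincide with the pointwise limit $0$.'' This conflates two different sequences: it is $\{f_{k_j}\}$ that converges to $0$ locally uniformly, not $\{Tf_{k_j}\}$, and for an \emph{abstract} bounded linear $T$ there is no information whatsoever about $Tf_{k_j}(w)$ for any $w$. The alternative you float --- ``using that $T$ is the given integral-type operator'' --- is unavailable for the same reason: the hypothesis is only that $T$ is bounded and linear. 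The gap is not fatal for this paper: the direction compact $\Rightarrow$ sequential is invoked only in the proof of $(i)\Rightarrow(ii)$ for the compactness of $\T_\mu$ with $1<p\le q<\infty$, where your weak-convergence argument already suffices.
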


For a Banach space or a complete metric space $X$  and a positive Borel measure $\mu$ on $\BB$, $\mu$ is a  $q-$Carleson measure (vanish $q-$Carleson measure) for $X$ means that the identity operator $Id:X\to L_\mu^q$ is bounded (compact).
When $0<p\leq q<\infty$ and $\om\in\hD$, the characterizations of $q-$Carleson measure for $A_\om^p$ was obtained in \cite{DjLsLxSy2019arxiv}.
\msk

\noindent{\bf Theorem A. }{\it Let $0<p\leq q<\infty$, $\om\in \hD$ and $\mu$ be a positive Borel measure on $\D$. Then the following statements hold:
\begin{enumerate}[(i)]
  \item  $\mu$ is a $q$-Carleson measure for $A_\om^p$ if and only if
  \begin{align}\label{0109-8}
  \sup_{a\in\BB} \frac{\mu(S_a)}{(\om(S_a))^{\frac{q}{p}}}<\infty.
  \end{align}
  Moreover, if $\mu$  is a $q$-Carleson measure for $A_\om^p$, then
  $$\|Id\|_{A_\om^p\to L_\mu^q}^q \approx \sup_{a\in\BB} \frac{\mu(S_a)}{(\om(S_a))^{\frac{q}{p}}}.$$
  \item  $\mu$ is a vanish $q$-Carleson measure for $A_\om^p$ if and only if
  $$\lim_{|a|\to 1}   \frac{\mu(S_a)}{(\om(S_a))^{\frac{q}{p}}}=0.$$
\end{enumerate}
}

Recall that, for any $f\in L^1_\om$,  the Bergman projection $P_\om $ is defined by
\begin{align*}
P_\om f(z)=\int_\BB f(\xi)\ol{B_z^\om(\xi)}\om(\xi)dV(\xi),
\end{align*}
and the  maximal Bergman projection $P_\om^+$ is defined by $$P_\om^+(f)(z)=\int_\BB f(\xi)\left|B_z^\om(\xi)\right|\om(\xi)dV(\xi).$$

The following Theorems B and C are main results in \cite{DjLsLxSy2019arxiv2}.\msk

\noindent{\bf Theorem B. }{\it When  $\om\in\dD$,  $P_\om:L^\infty\to\B$ is bounded and  onto.}\msk

\noindent{\bf Theorem C. }{\it Suppose $1<p<\infty$ and $\om,\upsilon\in\dD$. Let $q=\frac{p}{p-1}$. Then the following statements are equivalent:
\begin{enumerate}[(i)]
  \item $P_\om^+: L_\upsilon^p\to  L_\upsilon^p$ is bounded;
  \item $P_\om: L_\upsilon^p\to  L_\upsilon^p$ is bounded;
  \item $M=\sup\limits_{0\leq r<1} \frac{\hat{\upsilon}(r)^{\frac{1}{p}}}{\hat{\om}(r)}
  \left(\int_r^1 \frac{\om(s)^q}{\upsilon(s)^{q-1}}s^{2n-1}ds\right)^\frac{1}{q}<\infty;$
  \item $N=\sup\limits_{0\leq r <1} \left(\int_0^r \frac{\upsilon(s)}{\hat{\om}(s)^p}s^{2n-1}ds+1\right)^\frac{1}{p}\left(\int_r^1 \frac{\om(s)^q}{\upsilon(s)^{q-1}}s^{2n-1}ds\right)^\frac{1}{q}<\infty.$
\end{enumerate}}
\msk

Let $P_z$ be the orthogonal projection of $\CC^n$ onto the one dimensional subspace $[z]=\{\lambda z:\lambda\in\CC\}$ generated by $z$, and $P_z^{\perp}$ be the orthogonal projection from $\CC^n$ onto $\CC^n\ominus[z]$. Thus $P_0(w)=0$,  $P_0^\perp(w)=w$ and
$$P_z(w)=\frac{\langle w,z\rangle}{|z|^2}z, ~~~~\,~~~\,~~P_z^\perp(w)=w-\frac{\langle w,z\rangle}{|z|^2}z,~~~~\mbox{~~~when~~}z\neq0.$$
For $z,w\in\BB$, the pseudo-hyperbolic distance between $z$ and $w$ is defined by
$$\rho(z,w)=\left|\frac{z-P_z(w)-\sqrt{1-|z|^2}P_z^\perp(w)}{1-\langle w,z\rangle}\right|.$$
The pseudo-hyperbolic ball at $z\in\BB$ with radius $r\in(0,1)$ is given by
 $$\Delta(z,r)=\{w\in\BB:\rho(z,w)<r\}.$$
Let $\beta(\cdot,\cdot)$ be the Bergman metric, that is $$\beta(z,w)=\frac{1}{2}\log\frac{1+\rho(z,w)}{1-\rho(z,w)}.$$
$D(z,r)$ means a Bergman metric ball at $z$ with radius $r>0$.
As we know, every Bergman metric ball is a pseudo-hyperbolic ball, and for all $a\in\BB$ and $z\in D(a,r)$, we  have $1-|z|\approx 1-|a|$.

The pseudo-hyperbolic balls and Bergman metric balls play very important roles in the theory of operators on the $A_\alpha^p$.  But when $\om\in\hD$, the roles of pseudo-hyperbolic balls and Bergman metric balls are substituted by Carleson block in the unit ball. If $\om\in\dD$, we   compare $\om(\Delta(a,r))$ with $\om(S_a)$
 as follows.

\begin{Proposition}\label{0421-1}
Let $0<r<1$ and $\om\in\dD$ such that $\frac{1}{K_\om}+\frac{2r}{1+r^2}>1$.
Then, for all $z\in\BB$  and $w\in\Delta(z,r)$,
\begin{align}\label{0508-1}
\om(\Delta(z,r))\approx \om(\Delta(w,r))\approx \om(S_z)\approx \om(S_w).
\end{align}
Moreover, if $\om\in\R$, for any fixed $r\in(0,1)$, (\ref{0508-1}) holds.
\end{Proposition}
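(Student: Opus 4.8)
The plan is to reduce everything to three comparisons: $\om(\Delta(z,r))\approx\om(S_z)$, $\om(\Delta(z,r))\approx\om(\Delta(w,r))$, and $\om(S_z)\approx\om(S_w)$, and the last of these is essentially immediate from Lemma \ref{1210-3}(ii)--(iii) once we know $1-|z|\approx 1-|w|$ for $w\in\Delta(z,r)$, a standard fact recorded in the paragraph preceding the statement. So the heart of the matter is the first comparison, $\om(\Delta(z,r))\approx\om(S_z)$, with constants depending only on $r$ and $\om$ (and not on $z$).

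For the upper bound $\om(\Delta(z,r))\lesssim\om(S_z)$, I would first dispose of the trivial range $|z|$ bounded away from $1$ (there $\om(\Delta(z,r))\le\|\om\|_{L^1}$ and $\om(S_z)$ is bounded below by a positive constant depending on $r$, since $S_z$ then contains a fixed ball). For $|z|$ close to $1$, recall that $\Delta(z,r)$ is comparable to a Bergman metric ball, so there is a constant $c=c(r)$ with $\Delta(z,r)\subset\{w: 1-|w|\ge c(1-|z|)\}$ and, more importantly, $\Delta(z,r)$ is contained in a nonisotropic ``tent'' of the form $S_a$ where $1-|a|\approx 1-|z|$ and $a/|a|$ is within a bounded nonisotropic distance of $z/|z|$; using the doubling property $\om\in\hD$ together with Lemma \ref{1210-3}(ii), $\om(S_a)\approx(1-|a|)^n\hat\om(a)\approx(1-|z|)^n\hat\om(z)\approx\om(S_z)$. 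This gives $\om(\Delta(z,r))\le\om(S_a)\lesssim\om(S_z)$.

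For the lower bound $\om(\Delta(z,r))\gtrsim\om(S_z)$, the point is that $\Delta(z,r)$ contains a nonisotropic set of the right size. Fix a smaller radius, say $r/2$; then $\Delta(z,r)\supset\Delta(z,r/2)$, and $\Delta(z,r/2)$ contains (for $|z|$ near $1$) a Carleson-type block $S_b$ with $1-|b|\approx(1-|z|)$ and $Q_b\subset Q_z$ a nonisotropic ball of radius comparable to $\sqrt{1-|z|}$ — this is precisely where the hypothesis $\frac{1}{K_\om}+\frac{2r}{1+r^2}>1$ enters. Indeed $\frac{2r}{1+r^2}$ is the Bergman-metric radius $\tanh(\beta)$ we can afford, and $\frac{1}{K_\om}$ measures how deep into the radial direction we must push to gain a definite fraction of $\hat\om$: by the reverse-doubling definition \eqref{0420-1} and the quantitative control of $K_\om$, integrating $\om$ over the piece of $S_z$ with $|z|<|w|<1-\frac{1-|z|}{K}$ for $K$ slightly larger than $K_\om$ already captures a fixed proportion of $\om(S_z)$, and the hypothesis guarantees this radial slab, after intersecting with the relevant nonisotropic cap, still sits inside $\Delta(z,r)$. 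Hence $\om(\Delta(z,r))\ge\om(S_b)\gtrsim(1-|z|)^n\hat\om(z)\approx\om(S_z)$ by Lemma \ref{1210-3}(ii) again. The symmetric statements for $w$ in place of $z$ follow by applying what we have proved at the point $w$ and then using $\om(S_z)\approx\om(S_w)$; the comparison $\om(\Delta(z,r))\approx\om(\Delta(w,r))$ is then immediate, and for $\om\in\R$ one has $K_\om=1$ (Lemma 1.1 of \cite{PjaRj2014book} as quoted), so the hypothesis $\frac{1}{K_\om}+\frac{2r}{1+r^2}>1$ holds automatically for every $r\in(0,1)$.

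The main obstacle I anticipate is the careful geometric bookkeeping in the lower bound: one must make explicit how a pseudo-hyperbolic ball $\Delta(z,r)$ in $\BB$ decomposes in the ``polar'' coordinates (radial depth $1-|w|$ versus nonisotropic angular spread $d(w/|w|,z/|z|)$), verify that the slab dictated by $K_\om$ fits inside it under the stated numerical condition, and keep all comparison constants uniform in $z$. The doubling and reverse-doubling characterizations in Lemma \ref{0507-1} (the essentially monotone quantities $\hat\om(t)/(1-t)^b$ and $\hat\om(t)/(1-t)^a$) are the right tools to make the radial estimates uniform, and Lemma \ref{1210-3}(ii)--(iii) converts every $\om$-mass of a tent into the normalized quantity $(1-|z|)^n\hat\om(z)$, which is manifestly a function of $|z|$ alone — that is what ultimately forces all four quantities in \eqref{0508-1} to be comparable.
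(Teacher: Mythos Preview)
Your overall strategy matches the paper's: reduce to $\om(\Delta(z,r))\approx\om(S_z)$, get the upper bound by enclosing $\Delta(z,r)$ in a comparable Carleson block (the paper simply cites Lemma~8 of \cite{DjLsLxSy2019arxiv} here), and get the lower bound by exhibiting a ``radial slab $\times$ nonisotropic cap'' inside $\Delta(z,r)$ whose $\om$-mass is controlled via the reverse-doubling constant $K_\om$. The paper carries this out explicitly through the ellipsoid description of $\Delta(z,r)$, constructing a set $E_z=\{\,|c|<|w|<|c|+\delta r t,\ |1-\langle w/|w|,c/|c|\rangle|<k(1-|c|)\,\}$ and checking $E_z\subset\Delta(z,r)$ by a direct computation; the numerical hypothesis enters exactly as you say, via $\frac{\delta r(1+|z|)}{1+r^2|z|}\to\frac{2\delta r}{1+r^2}>1-\frac{1}{K_\om}$.

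Two points to tighten. First, do not pass to $\Delta(z,r/2)$: the function $r\mapsto\frac{2r}{1+r^2}$ is increasing on $(0,1)$, so shrinking the radius makes the hypothesis $\frac{1}{K_\om}+\frac{2r}{1+r^2}>1$ \emph{harder} to satisfy, not easier; you must work directly at radius $r$ (with an auxiliary $\delta<1$ close to $1$, as the paper does). Second, the set you place inside $\Delta(z,r)$ cannot be a genuine Carleson block $S_b$, since every $S_b$ reaches the boundary while $\Delta(z,r)$ does not; what you actually need---and what your own description of ``the piece of $S_z$ with $|z|<|w|<1-\frac{1-|z|}{K}$'' correctly captures---is a truncated slab. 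Replace the final line ``$\om(\Delta(z,r))\ge\om(S_b)$'' by the estimate $\om(\Delta(z,r))\ge\om(E_z)\approx(1-|c|)^n\int_{|c|}^{|c|+\delta r t}\om(s)\,ds\gtrsim(1-|z|)^n\hat\om(z)$, and the argument is complete.
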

\begin{proof}
 For any $z\neq 0$, $\Delta(z,r)$ is an ellipsoid consisting of all $w\in \BB$ such that
$$\frac{|P_z(w)-c|^2}{r^2t^2}+\frac{|P_z^\perp(w)|^2}{r^2t}<1,$$
where
$$ c=\frac{(1-r^2)z}{1-r^2|z|^2}~~,\mbox{~~~~}~~t=\frac{1-|z|^2}{1-r^2|z|^2}.$$
As $|z|\to 1$, we have $|c|\to 1$ and $t\to 0$.
Without loss of generality, we can assume $z=(|z|,0,0,\cdots,0)$.
Then $w=(w_1,w_2,\cdots,w_n)\in \Delta(z,r)$ if and only if
$$\frac{|w_1 -|c||^2}{r^2t^2}+\frac{|w|^2-|w_1|^2}{r^2t}<1.$$

Let  $\delta,k\in(0,1)$ and
\begin{align*}
E_z=&\left\{w\in\BB:|c|<|w|<|c|+\delta rt,\,\,\mbox{ and }\,\, \left|1-\langle \frac{w}{|w|},\frac{c}{|c|} \rangle\right|<k(1-|c|)\right\}\\
=&\bigg\{w\in\BB:|c|<|w|<|c|+\delta rt,\,\,\mbox{ and }\,\, \left||w|-w_1\right|<k|w|(1-|c|)\bigg\}.
\end{align*}
After a calculation, for all  $w\in E_z$,  there is a $C=C(r)>0$ such that
\begin{align*}
\frac{|w_1 -|c||^2}{r^2t^2}+\frac{|w|^2-|w_1|^2}{r^2t}  &\leq \frac{(|w_1 -|w|+|w|-|c||)^2}{r^2t^2}+\frac{2(|w|-|w_1|)}{r^2t}  \\
&\leq \delta^2+Ck.
\end{align*}
Let $\delta, k\in (0,1)$ such that
$$\frac{1}{K_\om}+\frac{2r\delta}{1+r^2}>1\,\,\mbox{ and }\,\,\delta^2+Ck<1.$$
Then we have $E_z\subset \Delta(z,r)$. Moreover, we can get a  $K>K_\om$ such that
$$\frac{\delta r(1+|z|)}{1+r^2|z|}>1-\frac{1}{K},~~~\,~~\mbox{as}~~ |z|\rightarrow 1. $$
  Then
\begin{align*}
\int_{|c|}^{|c|+\delta rt}\om(s)ds&=\int_{|c|}^{|c|+\frac{\delta r(1+|z|)}{1+r^2|z|}(1-|c|)} \om(s)ds \geq \int_{|c|}^{|c|+(1-\frac{1}{K})(1-|c|)} \om(s)ds \\
&=\int_{|c|}^{1-\frac{1-|c|}{K}}\om(s)ds\gtrsim  \hat\om(|c|)\approx \hat{\om}(|z|).
\end{align*}
Hence
\begin{align*}
\om(\Delta(z,r))\geq \om(E_z)\approx (1-|c|)^n\int_{|c|}^{|c|+\delta rt}\om(s)ds\gtrsim\om(S_z).
\end{align*}
By the proof of Lemma 8 in \cite{DjLsLxSy2019arxiv}, we have $\om(\Delta(z,r))\lesssim\om(S_z)$ as $|z|\rightarrow 1$.
For any fixed $\tau\in(0,1)$, when $|z|\leq \tau$, it is obvious that $\om(\Delta(z,r))\approx 1\approx \om(S_z)$.
By Lemma \ref{1210-3}, (\ref{0508-1}) holds.

If $\om\in\R$, using $K_\om=1$,  (\ref{0508-1}) holds for any fixed $r\in(0,1)$. The proof is complete.
\end{proof}

\section{Boundedness and compactness of $\T_\mu:A_\om^p\to A_\om^q$ with $\om\in\dD$}
In this section, we will discuss the boundedness and compactness of $\T_\mu:A_\om^p\to A_\om^q$ with $\om\in\dD$.

For a $f\in H(\BB)$, the Taylor series of $f$ at origin, which converges absolutely and uniformly on each compact subset of $\BB$,  is
$$f(z)=\sum_{m}\hat{f}_mz^m, \,\,z\in\BB.$$ Here the summation is over all multi-index $m=(m_1,m_2,\cdots,m_n),$
where each $m_k(k=1,2,\cdots,n)$ is a nonnegative integer and $z^m=z_1^{m_1}z_2^{m_2}\cdots z_{n}^{m_n}.$
Let
$$|m|=m_1+m_2+\cdots+m_n, \,\, m!=m_1!m_2!\cdots m_n!.$$

Suppose $\om\in\hD$. The space $A_\om^2$ is a Hilbert space with the inner product as follows.
$$\langle f,g \rangle_{A_\om^2}=\int_\BB f(z)\ol{g(z)}\om(z)dV(z), \,\,\mbox{ for all }\,\,f,g\in A_\om^2.$$

\begin{Lemma}\label{0413-1}
Suppose $\om\in\hD$. Then there exist constants $c=c(\om)>0$ and $\delta=\delta(\om)\in(0,1)$ such that
$$|B_a^\om(z)|\geq \frac{c}{\om(S_a)},\,\,z\in S_{a_\delta},\,\,a\in\BB\backslash\{0\},$$
where $a_\delta=(1-\delta(1-|a|))\frac{a}{|a|}$.
\end{Lemma}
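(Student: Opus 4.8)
The plan is to estimate $|B_a^\om(z)|$ from below on a Carleson block $S_{a_\delta}$ slightly smaller than $S_a$, by writing the value $B_a^\om(z)$ as the reproducing-kernel value $\langle B_z^\om, B_a^\om\rangle_{A_\om^2}$ and comparing it with $\|B_a^\om\|_{A_\om^2}^2$. More concretely, since $B_a^\om(z)=\overline{B_z^\om(a)}$ one may assume by rotational invariance that $a=(|a|,0,\dots,0)$, so that $B_a^\om(z)=\frac{1}{2n!}\sum_{k=0}^\infty \frac{(n-1+k)!}{k!\,\om_{2n+2k-1}}(|a|z_1)^k$ depends only on $z_1$ and is (up to the normalizing constant) a one-variable doubling-Bergman kernel evaluated at $|a|\,z_1$. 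Thus the estimate reduces, after controlling the dependence on the remaining coordinates inside the thin shell $S_{a_\delta}$, to a one-dimensional statement about the size of $B_{|a|}^\om$ near the boundary point $a/|a|$.

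First I would record the pointwise asymptotics for the kernel. By the reproducing property, $\|B_a^\om\|_{A_\om^2}^2=B_a^\om(a)\approx \frac{1}{\om(S_a)}$ (this can be read off from Lemma \ref{0313-2}(ii) with $\upsilon=\om$, $p=2$, together with Lemma \ref{1210-3}(ii) and Lemma \ref{0507-1}(i)(c), or directly from the coefficient formula and Lemma \ref{0507-1}(i)(c)), so a lower bound of the correct order is $|B_a^\om(z)|\gtrsim 1/\om(S_a)$. The natural route is to show that on $S_{a_\delta}$ the kernel cannot have decayed much from its value at $a$: write
\begin{align*}
B_a^\om(z)-B_a^\om(a)=\int_0^1 \Re\big(B_a^\om\big)\big(a+t(z-a)\big)\,\frac{dt}{\cdots},
\end{align*}
or, more cleanly, integrate the radial derivative along the segment from $a$ to $z$ and use the gradient/radial-derivative estimate $M_\infty(r,\Re B_a^\om)\lesssim \int_0^{r|a|}\hat\om(s)^{-1}(1-s)^{-(n+1)}ds$ coming from Lemma \ref{0313-2}(i) (case $p=\infty$ interpreted via the $M_p$ growth, or via the standard estimate $\|\Re B_a^\om\|_{H^\infty}\lesssim \frac{1}{(1-|a|)\om(S_a)}$ which follows from Lemma \ref{0313-1} applied to the function $B_a^\om$ and the Bloch-norm comparison). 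Since for $z\in S_{a_\delta}$ one has $|z-a|\lesssim \delta(1-|a|)$ and $1-|w|\approx 1-|a|$ for every $w$ on the segment, this yields
\begin{align*}
\big|B_a^\om(z)-B_a^\om(a)\big|\lesssim \delta(1-|a|)\cdot \frac{1}{(1-|a|)\om(S_a)}=\frac{C\delta}{\om(S_a)}.
\end{align*}
Choosing $\delta$ small enough that $C\delta<\tfrac12 c_0$, where $c_0/\om(S_a)$ is the lower bound for $|B_a^\om(a)|$, gives $|B_a^\om(z)|\geq \tfrac12 c_0/\om(S_a)$ on $S_{a_\delta}$, as required; the case $|a|$ bounded away from $1$ is handled separately by continuity and compactness, absorbing it into the constant.

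The main obstacle I anticipate is making the derivative estimate on $B_a^\om$ along the \emph{whole} segment from $a$ to $z$ rigorous and uniform: Lemma \ref{0313-2}(i) is stated for the integral means $M_p(r,\cdot)$, so to get a genuine pointwise bound one needs either the $H^\infty$ estimate via $\|\Re B_a^\om\|_{H^\infty}\approx \|B_a^\om\|_{\B}/(1-|a|)\approx \frac{1}{(1-|a|)\om(S_a)}$ (Lemma \ref{0313-1}), which is clean but must be checked to apply with the stated shape, or a subharmonicity/mean-value argument converting the $M_\infty$ growth of $\Re B_a^\om$ into a pointwise bound on the slightly shrunk region. A secondary technical point is controlling how the thin shell $S_{a_\delta}$ sits inside $S_a$ so that $1-|w|\approx 1-|a|$ and $\rho(w,a)$ stays bounded uniformly; this is routine given the geometry of Carleson blocks recalled in Section 2, but it must be spelled out to keep the constants independent of $a$. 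Once the pointwise radial-derivative bound is in hand, the rest is the one-line triangle-inequality argument above.
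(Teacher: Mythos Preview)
Your overall strategy---triangle inequality plus a derivative bound---is exactly right, and the reduction to the one-variable function $\zeta\mapsto F(\zeta)=\sum_k c_k\zeta^k$ with $\zeta=\langle z,a\rangle$ is the correct move. The gap is in the choice of anchor point and the geometry that follows from it.

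You compare $B_a^\om(z)$ with $B_a^\om(a)$ and claim that for $z\in S_{a_\delta}$ one has $|z-a|\lesssim \delta(1-|a|)$. This is false: points of $S_{a_\delta}$ satisfy $|z|\ge |a_\delta|=1-\delta(1-|a|)$, so the \emph{radial} gap alone gives $|z|-|a|\ge (1-\delta)(1-|a|)$, which is of order $(1-|a|)$ for small $\delta$. Equivalently, in the one-variable picture the segment $[\,|a|^2,\langle z,a\rangle\,]$ has length $\asymp (1-|a|)$, with no factor of $\delta$. Combined with the derivative bound $|F'(\eta)|\lesssim\frac{1}{(1-|a|)\om(S_a)}$ on $\{|\eta|\le |a|\}$ (this is what the Bloch estimate actually gives---note that $\|\Re B_a^\om\|_{H^\infty}$ is \emph{not} finite; the Bloch norm yields $|\Re B_a^\om(w)|\lesssim \frac{1}{(1-|w|)\om(S_a)}$, with $|w|$, not $|a|$, in the denominator), you only get $|B_a^\om(z)-B_a^\om(a)|\lesssim \frac{C}{\om(S_a)}$ with a fixed constant $C$ that you cannot make small. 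Thus the subtraction $B_a^\om(a)-C/\om(S_a)$ need not be positive.

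The paper fixes this by anchoring at $a_\delta$ instead of $a$. Two things make this work. First, the segment $I=[\langle a_\delta,a\rangle,\langle z,a\rangle]$ \emph{does} have length $\le 2\delta(1-|a|)$ for $z\in S_{a_\delta}$ (both the radial distance $\bigl||z|-|a_\delta|\bigr|$ and the angular term $|z|\,|1-\langle z/|z|,a/|a|\rangle|$ are $\le \delta(1-|a|)$), so the derivative bound now yields $|B_a^\om(a_\delta)-B_a^\om(z)|\lesssim \frac{\delta}{\om(S_a)}$. Second, the anchor value is still large: writing $t=\sqrt{|a_\delta|/|a|}$ one has $B_a^\om(a_\delta)=B_{ta}^\om(ta)=\|B_{ta}^\om\|_{A_\om^2}^2\ge \frac{C_1}{\om(S_{ta})}\ge \frac{C_1}{\om(S_a)}$. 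With both pieces in hand the choice of $\delta$ small finishes the proof exactly as you outlined. So the only correction needed is to move the anchor from $a$ to $a_\delta$; once you do that, the remaining details you anticipated (the pointwise derivative bound via the series/Bloch estimate, and the fact that $|\eta|\le |a|$ on $I$) go through without difficulty.
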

\begin{proof}By Lemma \ref{0313-2}, when $1<p<\infty$, for all $|z|>\frac{6}{7}$,  we have
\begin{align*}
\|B_z^\om\|_{A_\om^p}^p &\lesssim \frac{1}{\hat{\om}(z)^{p-1}}\int_0^{|z|}\frac{dt}{(1-t)^{np-n+1}}
\lesssim \frac{1}{\hat{\om}(z)^{p-1} (1-|z|)^{np-n}},
\end{align*}
and
\begin{align*}
\|B_z^\om\|_{A_\om^p}^p
&\gtrsim  \int_{2|z|-1}^{|z|}\frac{dt}{\hat{\om}(t)^{p-1}(1-t)^{np-n+1}}
\approx \frac{1}{\hat{\om}(z)^{p-1} (1-|z|)^{np-n}}.
\end{align*}
So, when $1<p<\infty$, by Lemma \ref{1210-3}, we have
\begin{align*}
\|B_z^\om\|_{A_\om^p} \approx \frac{1}{\hat{\om}(z)^{1-\frac{1}{p}} (1-|z|)^{n-\frac{n}{p}}}
\approx \frac{1}{\om(S_z)^{1-\frac{1}{p}}},\,\,\mbox{ for all }\,\, |z|>\frac{6}{7}.
\end{align*}
When $0<|z|\leq \frac{6}{7}$, there exist $r_0\in(0,1)$ and $\varepsilon>0$ such that
$$|B_z^\om(w)|\geq \varepsilon,\,\,\mbox{ for all }\,\,|w|\leq r_0.$$
Therefore, when $0<|z|\leq \frac{6}{7}$, we obtain
\begin{align*}
\|B_z^\om\|_{A_\om^p}^p \geq \int_{|w|\leq r_0}|B_z^\om(w)|^p\om(w)dV(w)\geq C(\varepsilon,r_0,\om)
\approx \frac{1}{\om(S_z)^{p-1}},
\end{align*}
and
\begin{align*}
\|B_z^\om\|_{A_\om^p}^p  &= 2n \int_0^1 r^{2n-1}\om(r) M_p^p (r,B_z^\om)dr = 2n \int_0^1 r^{2n-1}\om(r) M_p^p (\frac{7|z|r}{6},B_{\frac{6z}{7|z|}}^\om)dr \\
&\leq 2n \int_0^1 r^{2n-1}\om(r) M_p^p (r,B_{\frac{6z}{7|z|}}^\om)dr  =\|B_{\frac{6z}{7|z|}}^\om\|_{A_\om^p}^p\approx 1\approx \frac{1}{\om(S_z)^{p-1}}.
\end{align*}
The case of $z=0$ is trivial. So, we have
\begin{align}\label{0412-1}
\|B_z^\om\|_{A_\om^p} \approx \frac{1}{\hat{\om}(z)^{1-\frac{1}{p}} (1-|z|)^{n-\frac{n}{p}}}
\approx \frac{1}{\om(S_z)^{1-\frac{1}{p}}},\,\,\mbox{ for all }\,\, z\in\BB .
\end{align}
In Particular, when $p=2$, we have $\|B_a^\om\|_{A_\om^2}^2 \approx \frac{1}{\om(S_a)}$. So, there exists a constant $C_1=C_1(\om)>0$, such that
$\|B_a^\om\|_{A_\om^2}^2 \geq  \frac{C_1}{\om(S_a)}$ for all $a\in \BB\backslash\{0\}$.
For any fixed $\delta\in (0,1]$, let $$t=t_{\delta,a}=\sqrt{\frac{1-\delta(1-|a|)}{|a|}}.$$
Then for all $z\in\BB$,  we have
\begin{align*}
|B_a^\om(z)|
&\geq |B_a^\om(a_\delta)|-|B_a^\om(a_\delta)-B_a^\om(z)|=|B_{ta}^\om(ta)|-|B_a^\om(a_\delta)-B_a^\om(z)|   \\
&=\|B_{ta}^\om\|_{A_\om^2}^2-|B_a^\om(a_\delta)-B_a^\om(z)|\geq  \frac{C_1}{\om(S_{ta})}-|B_a^\om(a_\delta)-B_a^\om(z)|  \\
& \geq  \frac{C_1}{\om(S_{a})}-|B_a^\om(a_\delta)-B_a^\om(z)|.
\end{align*}

Let $I=[\langle a_\delta,a\rangle, \langle z,a \rangle]$ be the line segment in $\D$. For all $\eta\in I$, we have $|\eta|\leq |a|$.
Let $|I|$ be the length of $I$. If $z\in S_{a_\delta}$, we have
\begin{align*}
|I|&=|\langle a_\delta,a\rangle- \langle z,a \rangle| =||a_\delta||a|-\langle z,a\rangle|  \\
&\leq |a_\delta|\left|1-\langle \frac{z}{|z|}, \frac{a}{|a|}\rangle+\langle \frac{z}{|z|}, \frac{a}{|a|}\rangle -\langle \frac{z}{|a_\delta|}, \frac{a}{|a|}\rangle \right|  \\
&\leq  |a_\delta|\left( 1-|a_\delta|+\frac{|z|-|a_\delta|}{|z||a||a_\delta|}|\langle z,a\rangle|\right)\\
&\leq 2\delta(1-|a|).
\end{align*}
By the proof of Lemma \ref{0313-1}, there exists $C=C(\om)$ such that
\begin{align}|B_a^\om(a_\delta)-B_a^\om(z)|
&=\frac{1}{2n!}\left|\sum_{k=0}^\infty \frac{(n-1+k)!}{k!\om_{2n+2k-1}} \left( \langle a_\delta,a\rangle^k  - \langle z,a\rangle^k  \right) \right|  \nonumber\\
&=\frac{1}{2n!}\left|\sum_{k=1}^\infty \frac{(n-1+k)!}{(k-1)!\om_{2n+2k-1}} \int_I \eta^{k-1}d\eta \right|  \nonumber\\
&\leq \frac{2\delta(1-|a|)}{2n!|a|}\sum_{k=1}^\infty \frac{(n-1+k)!}{(k-1)!\om_{2n+2k-1}}|a|^k  \label{0413-2}\\
&\leq \frac{C\delta}{|a|\om(S_{\sqrt{|a|}})}.  \nonumber
\end{align}
Here, $S_{\sqrt{|a|}}$ means some Carleson block $S_\eta$ with $|\eta|=\sqrt{|a|}$.
Since  $1-|a|\approx 1-\sqrt{|a|}$, by Lemma \ref{1210-3}, there exists $C=C(\om)$ such that
$$|B_a^\om(a_\delta)-B_a^\om(z)| \leq  \frac{C\delta}{\om(S_{a})}, \,\,\mbox{ when }\,\,|a|>\frac{1}{2}.$$
When $0<|a|\leq \frac{1}{2}$, by (\ref{0413-2}), there exists $C=C(\om)$ such that
\begin{align*}|B_a^\om(a_\delta)-B_a^\om(z)|
&\leq \frac{4\delta(1-\frac{1}{2})}{2n! (\frac{1}{2})}\sum_{k=1}^\infty \frac{(n-1+k)!}{(k-1)!\om_{2n+2k-1}}(\frac{1}{2})^k\leq   \frac{C\delta}{\om(S_a)}.
\end{align*}
Therefore,
\begin{align*}
|B_a^\om(z)| \geq  \frac{C_1}{\om(S_{a})}- \frac{\delta C}{\om(S_{a})}, \,\,\mbox{ when }\,\,a\in\BB\backslash\{0\}.
\end{align*}
We get the desired result by choosing a $\delta$ small enough. The proof is complete.
\end{proof}

\begin{Lemma}\label{0422-1}
Let $\om\in\hD$. Then there exists  $r=r(\om)>0$ such that $|B_z^\om(a)|\approx B_a^\om(a)$ for all $a\in\BB$ and $z\in D(a,r)$.
\end{Lemma}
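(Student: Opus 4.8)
The plan is to exploit the explicit power-series form of the kernel together with the subharmonic-type estimates already established for $B_z^\om$. First I would recall from Lemma \ref{0313-2} that for $|z|,|a|$ near the boundary one has $B_a^\om(a) = \|B_a^\om\|_{A_\om^2}^2 \approx 1/\om(S_a)$, and more generally from Lemma \ref{0313-1} that $\|B_a^\om\|_{H^\infty}\approx 1/\om(S_z)$. So the inequality $|B_z^\om(a)|\lesssim B_a^\om(a)$ for $z\in D(a,r)$ should follow quickly: since $z\in D(a,r)$ forces $1-|z|\approx 1-|a|$ (hence $\om(S_z)\approx\om(S_a)$ by Lemma \ref{1210-3}(iii), or Proposition \ref{0421-1}), we get $|B_z^\om(a)|\le\|B_z^\om\|_{H^\infty}\approx 1/\om(S_z)\approx 1/\om(S_a)\approx B_a^\om(a)$. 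The content of the lemma is therefore the reverse inequality $|B_z^\om(a)|\gtrsim B_a^\om(a)$, uniformly for $z\in D(a,r)$, for a suitably small fixed $r=r(\om)$.

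For the lower bound I would argue exactly as in the proof of Lemma \ref{0413-1}, but perturbing in the other argument. Write
$$|B_z^\om(a)|\ge B_a^\om(a)-|B_a^\om(a)-B_z^\om(a)| = \|B_a^\om\|_{A_\om^2}^2 - |B_a^\om(a)-B_z^\om(a)|,$$
and note $\|B_a^\om\|_{A_\om^2}^2\ge C_1/\om(S_a)$. Then estimate the difference $|B_a^\om(a)-B_z^\om(a)|$ using the Taylor expansion: since $B_a^\om(a)-B_z^\om(a) = \frac{1}{2n!}\sum_{k=1}^\infty \frac{(n-1+k)!}{k!\,\om_{2n+2k-1}}\bigl(|a|^{2k}-\langle a,z\rangle^k\bigr)$, I would bound $\bigl||a|^{2k}-\langle a,z\rangle^k\bigr|$ by writing it as a telescoping/integral over the segment $I=[|a|^2,\langle a,z\rangle]$ in $\D$, all of whose points $\eta$ satisfy $|\eta|\le\max(|a|^2,|\langle a,z\rangle|)\le |a|$ (up to harmless adjustments), so that $\bigl||a|^{2k}-\langle a,z\rangle^k\bigr|\le k\,|I|\,|a|^{k-1}$. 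The key geometric input is that $z\in D(a,r)$ controls $|I| = \bigl||a|^2-\langle a,z\rangle\bigr|\lesssim r(1-|a|)$; this is where the radius $r$ enters, and it is essentially the computation $|1-\langle z,a\rangle|\approx (1-|a|)$ with a constant proportional to $r$ on the Bergman ball, combined with $1-|z|\approx 1-|a|$. Plugging this in gives, just as in (\ref{0413-2}),
$$|B_a^\om(a)-B_z^\om(a)|\lesssim \frac{r(1-|a|)}{|a|}\sum_{k=1}^\infty \frac{(n-1+k)!}{(k-1)!\,\om_{2n+2k-1}}|a|^{k-1}\lesssim \frac{C r}{\om(S_a)},$$
using again Lemma \ref{1210-3} to pass from $S_{\sqrt{|a|}}$-type blocks to $S_a$, and treating $|a|\le\frac12$ separately by the same crude bound. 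Choosing $r=r(\om)$ small enough that $Cr<C_1/2$ yields $|B_z^\om(a)|\ge \frac{C_1}{2\om(S_a)}\approx B_a^\om(a)$.

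The main obstacle is the bookkeeping in the difference estimate: one must verify that the segment $I$ joining $|a|^2$ and $\langle a,z\rangle$ indeed stays in the disk of radius $\approx|a|$ (so that the factor $|a|^{k-1}$, rather than $1$, appears and the series converges to something comparable to $1/\om(S_a)$), and that the length bound $|I|\lesssim r(1-|a|)$ is genuinely uniform in $a$. This is the only place the hypothesis "$z\in D(a,r)$" is used quantitatively, and it is the analogue of the segment estimate carried out in Lemma \ref{0413-1}; I expect it to go through with the same care, since $D(a,r)$ being a pseudohyperbolic ball gives precisely $|1-\langle z,a\rangle|\le \frac{(1-|a|^2)}{1-r}$ and similar two-sided control. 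Everything else — the $H^\infty$ upper bound, the comparison $\om(S_z)\approx\om(S_a)$, and the small-$r$ absorption — is routine given the lemmas already proved.
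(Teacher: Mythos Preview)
Your proposal is correct and follows essentially the same route as the paper: the upper bound (you use the $H^\infty$ norm of $B_z^\om$ where the paper uses Cauchy--Schwarz on $\langle B_a^\om,B_z^\om\rangle_{A_\om^2}$, but both are one-line consequences of (\ref{0412-1}) and Lemma~\ref{1210-3}), and then the lower bound via the difference estimate along the segment $I=[\langle a,a\rangle,\langle z,a\rangle]$ exactly as in Lemma~\ref{0413-1}. The paper carries out the one step you flagged as the ``main obstacle'' explicitly, using the ellipsoid description of $D(a,r)$ with $a=(|a|,0,\dots,0)$ to get $|I|\le \dfrac{4\tanh r}{1-(\tanh r)^2}(1-|a|)$, which gives your $\delta(r)\to 0$ as $r\to 0$.
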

\begin{proof} For any fixed $r>0$, by Cauchy-Schwarz's inequality, (\ref{0412-1}) and Lemma \ref{1210-3}, we have
\begin{align}
|B_a^\om(z)|
&=\left|\langle B_a^\om,B_z^\om \rangle_{A_\om^2}\right|\leq \|B_a^\om\|_{A_\om^2}\|B_z^\om\|_{A_\om^2}     \nonumber \\
&\approx \left(\frac{1}{\om(S_z)}\frac{1}{\om(S_a)}\right)^\frac{1}{2}
\approx \frac{1}{\om(S_a)} \approx \|B_a^\om\|_{A_\om^2}^2=B_a^\om(a).  \label{0422-3}
\end{align}

Let $z\in D(a,r)$ and $I=[\langle a,a\rangle, \langle z,a \rangle]$ be the line segment in $\D$. We claim that  there exists a constant $\delta(r)$ such that
\begin{align}\label{0829-4}
|I|\leq \delta(r)(1-|a|)\,\mbox{ and  }\,\lim\limits_{r\to 0}\delta(r)=0.
\end{align}
Taking this for granted for a moment.
By the proof of Lemma \ref{0413-1}, there exists $C_3=C_3(\om)$ such that
$$|B_a^\om(a)-B_a^\om(z)|\leq \frac{C_3\delta(r)}{\om(S_a)}\,\,\mbox{for all}\,\,z\in D(a,r).$$
Therefore, by (\ref{0422-3}), there exists $C_4=C_4(\om)$ such that
\begin{align*}
|B_a^\om(z)|  &\geq |B_a^\om(a)|-|B_a^\om(a)-B_a^\om(z)|\geq  \frac{C_4}{\om(S_{a})}-\frac{C_3\delta(r)}{\om(S_a)}.
\end{align*}
We get the desired result by  choosing   $r$ small enough.

Now we only need to prove that (\ref{0829-4}) holds. Let $\tanh r=\frac{e^{2r}-1}{e^{2r}+1}$. Without loss of generality, suppose $a=(|a|,0,0,\cdots,0)$.
Then, $z\in D(a,r)$ if and only if
$$\frac{|z_1-c_1|^2}{(\tanh r)^2t^2}+\frac{|z_2|^2+|z_3|^2+\cdots+|z_n|^2}{(\tanh r)^2t}<1,$$
where
$$ c=\frac{(1-(\tanh r)^2)a}{1-(\tanh r)^2|a|^2}~~,\mbox{~~~~}~~t=\frac{1-|a|^2}{1-(\tanh r)^2|a|^2}.$$
Therefore, using $a=(|a|,0,0,\cdots,0)$, we have
\begin{align*}
|I|&=|\langle a,a\rangle -\langle z,a \rangle|\leq |z_1-a_1|\leq |z_1-c_1|+|c_1-a_1|  \\
&\leq 2t\tanh r  \leq \frac{4\tanh r}{1-(\tanh r)^2}(1-|a|),
\end{align*}
which implies the desired result. The proof is complete.
\end{proof}

\begin{Lemma}\label{0413-7}
Suppose $1<p<\infty$ and $\om\in\dD$. Let $q=\frac{p}{p-1}$. Then $(A_\om^p)^*\simeq A_\om^q$ with equivalent norms, under the pairing
$$\langle f,g \rangle_{A_\om^2}=\int_\BB f(z)\ol{g(z)}\om(z)dV(z).$$
\end{Lemma}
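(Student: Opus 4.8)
The plan is to identify the dual of $A_\om^p$ with $A_\om^q$ via the standard $A_\om^2$-pairing, following the classical template (e.g. the duality proof for $A_\alpha^p$ in \cite{Zk2005}), but using the tools assembled in this section in place of the explicit estimates available in the weighted-standard case. First I would show that every $g\in A_\om^q$ induces a bounded functional $\Lambda_g(f)=\langle f,g\rangle_{A_\om^2}$ on $A_\om^p$: this is immediate from H\"older's inequality, which gives $|\Lambda_g(f)|\le \|f\|_{A_\om^p}\|g\|_{A_\om^q}$, so $\|\Lambda_g\|\le\|g\|_{A_\om^q}$. The map $g\mapsto\Lambda_g$ is clearly linear and injective (test against monomials, or against reproducing kernels $B_z^\om$, using the reproducing formula quoted in the introduction to recover $g$ pointwise from $\Lambda_g$).

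Next I would prove surjectivity, which is the substantive direction. Given $\Lambda\in(A_\om^p)^*$, extend it by Hahn--Banach to a bounded functional on $L_\om^p$; by the Riesz representation for $L^p$ spaces there is $h\in L_\om^q$ with $\Lambda(f)=\int_\BB f\,\ol h\,\om\,dV$ for all $f\in A_\om^p$. Now set $g=P_\om h$. For $f\in A_\om^p$ one wants $\langle f,g\rangle_{A_\om^2}=\langle f,h\rangle_{A_\om^2}$, i.e.\ that $P_\om$ is "self-adjoint" against $A_\om^p$ functions; this follows from Fubini together with the reproducing property $f=P_\om f$ for $f\in A_\om^2$ (and a density/approximation argument, e.g.\ dilations $f_r(z)=f(rz)$, to pass from $A_\om^2$ to general $f\in A_\om^p$, noting $f_r\to f$ in $A_\om^p$ since $\om\in\hD$). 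Thus $\Lambda=\Lambda_g$. It remains to check $g\in A_\om^q$ with $\|g\|_{A_\om^q}\lesssim\|\Lambda\|$: here is where Theorem C is used. Since $\om\in\dD$, taking $\upsilon=\om$ in Theorem C, condition (iii) (or (iv)) holds trivially for $\upsilon=\om$, so $P_\om:L_\om^q\to L_\om^q$ is bounded (note $q=\frac{p}{p-1}\in(1,\infty)$ as well, and $\om\in\dD$); hence $\|g\|_{A_\om^q}=\|P_\om h\|_{L_\om^q}\lesssim\|h\|_{L_\om^q}=\|\Lambda\|$. Combining, $\|\Lambda_g\|\approx\|g\|_{A_\om^q}$, giving the equivalence of norms.

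I would close by remarking on two routine points: the well-definedness of $P_\om h$ for $h\in L_\om^q$ (it is, because $q>1$ and $P_\om$ is $L_\om^q$-bounded, so in particular $P_\om h\in L_\om^q\cap H(\BB)=A_\om^q$), and the independence of $g=P_\om h$ from the Hahn--Banach extension chosen (if $h_1,h_2$ both represent $\Lambda$ on $A_\om^p$, then $h_1-h_2\perp A_\om^p$ in the $A_\om^2$-pairing in the appropriate sense, and since $P_\om$ annihilates the orthogonal complement, $P_\om h_1=P_\om h_2$ — made rigorous again via the reproducing formula applied to $B_z^\om\in A_\om^p$, which lies in $A_\om^p$ by Lemma \ref{0313-2}(ii) or (\ref{0412-1})).

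The main obstacle I anticipate is not any single hard estimate but the careful bookkeeping in the surjectivity argument: verifying that $\langle f, P_\om h\rangle_{A_\om^2}=\langle f,h\rangle_{A_\om^2}$ for all $f\in A_\om^p$ requires justifying an interchange of integration (Fubini) that is licensed by the boundedness of the maximal projection $P_\om^+$ on $L_\om^q$ — again furnished by Theorem C with $\upsilon=\om$ — applied to $|h|$, and then an approximation argument to reduce to $f\in H^\infty$ or to dilates where the reproducing identity is transparent. Everything else (H\"older, Hahn--Banach, Riesz representation) is standard; the doubling hypothesis enters exactly through the $L_\om^q$-boundedness of $P_\om$ and $P_\om^+$ supplied by Theorem C, and through the mapping properties of the kernels $B_z^\om$ recorded in Lemmas \ref{0313-1} and \ref{0313-2}.
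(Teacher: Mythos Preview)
Your proposal is correct and follows essentially the same route as the paper: Hahn--Banach extension to $L_\om^p$, Riesz representation giving $h\in L_\om^q$, projection $g=P_\om h$, and Theorem~C (with $\upsilon=\om$) for the $L_\om^q$-boundedness of $P_\om$, together with H\"older for the easy direction and testing on monomials for injectivity. You are more careful than the paper about the Fubini step in $\langle P_\om f,h\rangle_{L_\om^2}=\langle f,P_\om h\rangle_{A_\om^2}$ (which the paper asserts without comment), correctly noting that the boundedness of $P_\om^+$ from Theorem~C supplies the absolute convergence needed to interchange the integrals.
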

\begin{proof}

Suppose $T\in (A_\om^p)^*$. By Hahn-Banach's Theorem, we can extend $T$ as a linear functional on $L_\om^p$ without increasing the norm of $T$.
Thus, there exists $h\in L_\om^q$, such that for all $f\in L_\om^p$,   $Tf=\langle f,h \rangle_{L_\om^2}$.
For all $f\in A_\om^p$, we have
\begin{align*}
Tf=\langle f,h \rangle_{L_\om^2}= \langle P_\om f,h \rangle_{L_\om^2}=\langle f, P_\om h \rangle_{A_\om^2}.
\end{align*}
By Theorem C, we have $g=P_\om h\in A_\om^q$ and $\|g\|_{A_\om^q}\leq \|P_\om\|_{L_\om^q\to A_\om^q}\|h\|_{L_\om^q}$. Therefore,
$$\|T\|_{A_\om^p\to\CC}\leq \|g\|_{A_\om^q},$$
and
$$\|T\|_{A_\om^p\to\CC}=\|T\|_{L_\om^p\to \CC}\approx \|h\|_{L_\om^q}\gtrsim \|g\|_{A_\om^q}.$$
Thus, $\|T\|_{A_\om^p\to\CC}\approx \|g\|_{A_\om^q}.$

For any fixed $g_i \in A_\om^q(i=1,2)$, let
$$T_i f=\langle f,g_i\rangle_{A_\om^2},\,\,\mbox{ for all }\,\,f\in A_\om^p.$$
Then $T_i:A_\om^p\to \CC$ is bounded. If $g_1\neq g_2$,  let $g_1-g_2=\sum_m a_mz^m$. Then there exists a $k$ such that $a_k\neq 0$. So we have
$(T_1-T_2)(z^k)\neq 0$ and hence $T_1\neq T_2$. The proof is complete.
\end{proof}

\begin{Lemma}\label{0413-6} Suppose $\om\in\hD$  and $\mu$ is a finite positive Borel measure on $\BB$.
If $f\in L_\mu^1$ and $\sum\limits_m |\hat{g}_m|<\infty$, then
$$\langle \T_\mu f,g\rangle_{A_\om^2}=\langle f,g \rangle_{L_\mu^2}.$$
\end{Lemma}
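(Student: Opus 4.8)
The plan is to verify the identity $\langle \T_\mu f, g\rangle_{A_\om^2} = \langle f, g\rangle_{L_\mu^2}$ by expanding everything in Taylor series and exchanging the orders of integration and summation, with the hypotheses $f\in L_\mu^1$ and $\sum_m|\hat g_m|<\infty$ serving precisely to justify those exchanges via Fubini and dominated convergence. First I would write out the left-hand side using the definition of the Toeplitz operator:
\begin{align*}
\langle \T_\mu f, g\rangle_{A_\om^2}
= \int_\BB \left(\int_\BB f(\xi)\,\ol{B_z^\om(\xi)}\,d\mu(\xi)\right)\ol{g(z)}\,\om(z)\,dV(z).
\end{align*}
Since $g\in H^\infty$ (indeed $\sum_m|\hat g_m|<\infty$ forces $g$ bounded on $\overline\BB$), $\mu$ is finite, and $f\in L_\mu^1$, I would check the absolute integrability of the double integrand against $d\mu(\xi)\,\om(z)dV(z)$: the inner kernel $B_z^\om(\xi)$ is, for each fixed $\xi$, in $A_\om^2$ with $\|B_\cdot^\om(\xi)\|$ controlled (via Lemma~\ref{0313-2}/\eqref{0412-1}) so that $\int_\BB|B_z^\om(\xi)|\,|g(z)|\,\om(z)dV(z) = |P_\om^+(\ol{g})(\xi)|$-type quantity is finite and, being a function of $\xi$ that is bounded on compact sets, is $\mu$-integrable after multiplying by $|f(\xi)|\in L_\mu^1$. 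Actually the cleanest route is to bound $\int_\BB |B_z^\om(\xi)| |g(z)| \om(z)\,dV(z) \le \|g\|_{H^\infty}\|B_\cdot^\om(\xi)\|_{A_\om^1}$ which is finite for each $\xi$; combined with finiteness of $\mu$ and $f \in L^1_\mu$ this is still not quite enough pointwise, so instead I would expand $B_z^\om(\xi)$ in its series and handle the sum term by term.

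The key computation is: for the monomials, $\langle \xi^m, z^k\rangle$-type pairings give $\langle z^k, z^m\rangle_{A_\om^2} = \delta_{km}\,c_m\,\om_{2n+2|m|-1}$ for an explicit combinatorial constant $c_m = \frac{m!\,(n-1)!}{(n-1+|m|)!}\cdot(\text{normalization})$, matching exactly the coefficients $\frac{1}{2n!}\cdot\frac{(n-1+k)!}{k!\,\om_{2n+2k-1}}$ appearing in $B_z^\om$. So I would first establish, for a single monomial $f(\xi)=\xi^m$ (or rather in general by linearity and density, but here we cannot use density in $L_\mu^1$), the reproducing-type identity $\int_\BB \ol{B_z^\om(\xi)}\,z^k\,\om(z)dV(z) = \ol{\xi^k}$ — this is just the reproducing property already quoted in the introduction applied to $\ol{g}$-against-kernel, read coefficient-wise. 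Then:
\begin{align*}
\langle \T_\mu f, g\rangle_{A_\om^2}
= \sum_m \ol{\hat g_m}\int_\BB\left(\int_\BB f(\xi)\ol{B_z^\om(\xi)}\,d\mu(\xi)\right)\ol{z^m}\,\om(z)dV(z),
\end{align*}
and exchanging $\sum_m$ with $\int_\BB\,d\mu$ (legitimate because $\sum_m|\hat g_m|<\infty$, each $|z^m|\le1$, and after the $z$-integration we recover a single $\xi$-power), I would use the reproducing identity to collapse the $z$-integral, obtaining $\sum_m \ol{\hat g_m}\int_\BB f(\xi)\ol{\xi^m}\,d\mu(\xi)$, which is exactly $\int_\BB f(\xi)\ol{g(\xi)}\,d\mu(\xi) = \langle f,g\rangle_{L_\mu^2}$ after one more interchange of $\sum_m$ and $\int d\mu$ (again justified by $\sum_m|\hat g_m|<\infty$ and $f\in L_\mu^1$, since $|\sum_m \hat g_m \xi^m| \le \sum_m |\hat g_m|$ on $\overline\BB$).

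The main obstacle I anticipate is bookkeeping the Fubini/interchange steps rigorously: one must be careful that the inner integral defining $\T_\mu f(z)$ is itself absolutely convergent for $\om dV$-a.e.\ $z$ (or all $z$), and that the iterated integral over $\BB\times\BB$ with respect to $d\mu\otimes\om dV$ has finite total mass when the integrand is replaced by its absolute value — this is where $\mu$ being \emph{finite} (not merely a Carleson measure) is used, together with the fact that $\sup_{|\xi|\le\rho}\|B_\cdot^\om(\xi)\|_{A_\om^1}<\infty$ for each $\rho<1$ while $|f|\in L^1_\mu$ controls the contribution near the boundary only after pairing against the bounded function $\ol g$. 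A safe way to sidestep delicate pointwise kernel estimates is to do everything at the level of Taylor coefficients from the start: expand $f$ is \emph{not} possible ($f$ need not be holomorphic), but expand only $B_z^\om(\xi)$ and $g(z)$ in series, push the finite-mass estimates through the nonnegative series $\sum_k \frac{(n-1+k)!}{k!\,\om_{2n+2k-1}}|\xi|^k = 2n!\,B_{|\xi|}^\om(|\xi|) < \infty$ for each fixed $\xi$, and conclude by monotone/dominated convergence. I would present the argument in that order: (1) absolute convergence of the relevant series for fixed $\xi$; (2) Fubini to swap $\int d\mu(\xi)$ with $\int\om(z)dV(z)$ and with $\sum_m$; (3) the coefficient-wise reproducing identity to evaluate the $z$-integral; (4) reassemble $\ol{g(\xi)}$ and identify the limit as $\langle f,g\rangle_{L^2_\mu}$.
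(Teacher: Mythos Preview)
Your overall strategy---pairing the kernel series against the Taylor expansion of $g$, invoking the reproducing identity for each monomial, and reassembling---is exactly right, and you have correctly singled out the Fubini interchange as the crux. However, step~(2) of your plan, ``Fubini to swap $\int d\mu(\xi)$ with $\int\om(z)dV(z)$'' (and likewise the swap of $\sum_m$ with the full $z$-integral), is not justified under the stated hypotheses. The Tonelli prerequisite would be
\[
\int_\BB\int_\BB|f(\xi)|\,|B_z^\om(\xi)|\,|g(z)|\,\om(z)\,dV(z)\,d\mu(\xi)<\infty,
\]
which, after bounding $|g|\le\sum_m|\hat g_m|$, reduces to $\int_\BB|f(\xi)|\,\|B_\xi^\om\|_{A_\om^1}\,d\mu(\xi)$. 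By Lemma~\ref{0313-2} with $p=1$, $\upsilon=\om$ one has $\|B_\xi^\om\|_{A_\om^1}\approx\log\frac{1}{1-|\xi|}$, and nothing in the hypotheses forces $|f|\log\frac{1}{1-|\cdot|}\in L^1_\mu$. (For instance, take $\mu=\sum_k k^{-2}\delta_{\xi_k}$ with $|\xi_k|=1-e^{-k^2}$ and $f(\xi_k)=1$: then $\mu$ is finite, $f\in L^1_\mu$, but $\int|f|\log\frac{1}{1-|\cdot|}\,d\mu=\sum_k 1=\infty$.) Your series expansion does not rescue this, because swapping either $\sum_m$ or the kernel series with the \emph{full} $z$-integral over $\BB$ runs into the same absolute-value obstruction.

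The missing device, and what the paper does, is a truncation in $z$. Restrict first to $|z|<s$: there $\sup_{|z|<s,\,\xi\in\BB}|B_z^\om(\xi)|<\infty$, so Fubini and all series interchanges hold trivially, giving
\[
\int_{|z|<s}\T_\mu f(z)\,\ol{g(z)}\,\om(z)\,dV(z)
=\int_\BB f(\xi)\,\ol{\int_{|z|<s}B_z^\om(\xi)\,g(z)\,\om(z)\,dV(z)}\,d\mu(\xi).
\]
Now the series computation you describe shows that the inner integral equals $\sum_m\hat g_m\,\xi^m\cdot\bigl(\int_{|z|<s}|z^m|^2\om\,dV\big/\int_\BB|z^m|^2\om\,dV\bigr)$ and hence is bounded in modulus by $\sum_m|\hat g_m|$ \emph{uniformly in $\xi$ and $s$}. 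This uniform bound, multiplied by $|f|\in L^1_\mu$, is the dominating function that lets dominated convergence push $s\to 1$, yielding $\int_\BB f\,\ol g\,d\mu$. The key point is that the orthogonality built into the reproducing kernel produces cancellation: $\int_{|z|<s}B_z^\om(\xi)\,g(z)\,\om\,dV$ stays bounded independently of $\xi$, even though $\int_{|z|<s}|B_z^\om(\xi)|\,\om\,dV$ does not.
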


\begin{proof}Let  $0<r<1$. By  Lemmas 1.8 and 1.11  in \cite{Zk2005} and
\begin{align*}
B_z^\om(w) =\frac{1}{2 n!}\sum_{m} \frac{(n-1+|m|)!}{m!  \om_{2n+2|m|-1}}\ol{z}^mw^m,
\end{align*}
we have
\begin{align*}
\left|\int_{|z|<r} B_z^\om(w)g(z) \om(z)dV(z)\right|
=&\frac{1}{2 n!}\left|\sum_{m} \frac{(n-1+|m|)!}{m!  \om_{2n+2|m|-1}}\hat{g}_m w^m\int_{|z|<r} z^m\ol{z}^m\om(z)dV(z)   \right|\\
\leq &\frac{1}{2 n!}\sum_{m} \frac{(n-1+|m|)!}{m!  \om_{2n+2|m|-1}} |\hat{g}_m| \int_{\BB} z^m\ol{z}^m\om(z)dV(z) \\
=&\sum_m |\hat{g}_m|<\infty.
\end{align*}
Then, Fubini's theorem and the dominated convergence theorem yield
\begin{align*}
\langle \T_\mu f,g\rangle_{A_\om^2} &=\lim_{s\to 1} \int_{|z|<s}\left(\int_\BB f(w)\ol{B_z^\om(w)}d\mu(w)\right)\ol{g(z)}\om(z)dV(z)  \\
&=\lim_{s\to 1} \int_{\BB} f(w)\ol{\int_{|z|<s} B_z^\om(w)g(z) \om(z)dV(z)}  d\mu(w)\\
&=\int_{\BB} f(w)\ol{\int_{\BB} B_z^\om(w)g(z) \om(z)dV(z)}  d\mu(w)\\
&=\langle f,g\rangle_{L_\mu^2}.
\end{align*}
The proof is complete.
\end{proof}

\begin{Theorem}\label{0414-2}
Let $1<p\leq q<\infty$, $\om\in\dD$ and $\mu$ be a positive Borel measure on $\BB$. Then the following statements are equivalent.
\begin{enumerate}[(i)]
  \item $\T_\mu:A_\om^p\to A_\om^q$ is bounded;
  \item $\frac{\widetilde{\T_\mu}(z)}{\om(S_z)^{\frac{1}{p}-\frac{1}{q}}}\in L^\infty$;
  \item $\mu$ is a $s\left(\frac{1}{p}-\frac{1}{q}+1\right)$-Carleson measure for $A_\om^s$, for some (equivalently for all ) $0<s <\infty$;
  \item $\frac{\mu(S_z)}{\om(S_z)^{\frac{1}{p}-\frac{1}{q}+1}}\in L^\infty$.
\end{enumerate}
\end{Theorem}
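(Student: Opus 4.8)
The natural strategy is to prove the cycle of implications $(iii)\Leftrightarrow(iv)\Rightarrow(i)\Rightarrow(ii)\Rightarrow(iv)$, using Carleson measure theory (Theorem A) as the backbone and exploiting that all four conditions are "geometric" in the sense that they only involve $\mu(S_z)$, $\om(S_z)$ and Berezin-type averages. First I would dispose of $(iii)\Leftrightarrow(iv)$: by Theorem A, $\mu$ being an $s(\tfrac1p-\tfrac1q+1)$-Carleson measure for $A_\om^s$ is equivalent to $\sup_{a}\mu(S_a)/\om(S_a)^{(\frac1p-\frac1q+1)}<\infty$, and this supremum is visibly independent of $s$; this simultaneously proves the "some/all $s$" equivalence and the equivalence with $(iv)$.

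For $(iv)\Rightarrow(i)$, I would first reduce to a Carleson-measure estimate via duality and the Bergman projection. Write $q'=q/(q-1)$ and $p'=p/(p-1)$. For $f\in A_\om^p$ and $h\in A_\om^{q'}$, Lemma \ref{0413-6} (applied after a density argument, approximating by functions with finitely many Taylor coefficients) gives $\langle\T_\mu f,h\rangle_{A_\om^2}=\int_\BB f(w)\ol{h(w)}\,d\mu(w)$, so
\[
|\langle\T_\mu f,h\rangle_{A_\om^2}|\le\int_\BB|f(w)||h(w)|\,d\mu(w).
\]
Now I would invoke Theorem A: condition $(iv)$, rewritten as $\sup_a\mu(S_a)/\om(S_a)^{r/s}<\infty$ with the right choice of exponents, says $\mu$ is a Carleson measure allowing one to control $\int|f||h|\,d\mu$ by $\|f\|_{A_\om^p}\|h\|_{A_\om^{q'}}$ via Hölder on $L_\mu$ together with the two individual Carleson embeddings (choosing $s_1,s_2$ with $1/s_1+1/s_2$ matching $1/p+1/q'$ and $s_1(\ldots)$, $s_2(\ldots)$ the corresponding Carleson exponents — this is exactly where the balance $\tfrac1p-\tfrac1q+1$ appears). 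Since $\om\in\dD$, Lemma \ref{0413-7} identifies $(A_\om^q)^*\simeq A_\om^{q'}$, so the bound on $|\langle\T_\mu f,h\rangle_{A_\om^2}|$ for all such $h$ yields $\|\T_\mu f\|_{A_\om^q}\lesssim\|f\|_{A_\om^p}$, i.e. $(i)$.

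For $(i)\Rightarrow(ii)$: apply $\T_\mu$ to the normalized reproducing kernels $b_z=B_z^\om/\|B_z^\om\|_{A_\om^2}$. Boundedness gives $\|\T_\mu b_z\|_{A_\om^q}\lesssim\|b_z\|_{A_\om^p}$, and by \eqref{0412-1} (with $p$ and with $2$) we have $\|b_z\|_{A_\om^p}\approx\om(S_z)^{1/2-1/p}$ while $\widetilde{\T_\mu}(z)=\langle\T_\mu b_z,b_z\rangle_{A_\om^2}\le\|\T_\mu b_z\|_{A_\om^q}\|b_z\|_{A_\om^{q'}}\lesssim\om(S_z)^{1/2-1/p}\om(S_z)^{1/2-1/q'}=\om(S_z)^{1/p-1/q}\cdot(\text{correcting exponents})$; carrying the arithmetic through gives $\widetilde{\T_\mu}(z)\lesssim\om(S_z)^{1/p-1/q}$, which is $(ii)$.

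For $(ii)\Rightarrow(iv)$: this is the step I expect to be the main obstacle, since one must pass from the smoothed average $\widetilde{\T_\mu}(z)$ back to the raw quantity $\mu(S_z)$. I would use the lower kernel estimate of Lemma \ref{0413-1}: there are $\delta,c>0$ with $|B_a^\om(\xi)|\ge c/\om(S_a)$ for $\xi\in S_{a_\delta}$. Writing out $\widetilde{\T_\mu}(a)=\|B_a^\om\|_{A_\om^2}^{-2}\int_\BB|B_a^\om(\xi)|^2\,d\mu(\xi)$ and restricting the integral to $S_{a_\delta}$ gives
\[
\widetilde{\T_\mu}(a)\gtrsim\om(S_a)\cdot\frac{1}{\om(S_a)^2}\,\mu(S_{a_\delta})=\frac{\mu(S_{a_\delta})}{\om(S_a)},
\]
so $(ii)$ yields $\mu(S_{a_\delta})\lesssim\om(S_a)^{1/p-1/q+1}$. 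Finally, since $1-|a_\delta|\approx1-|a|$, a standard covering of $S_a$ by boundedly many blocks of the form $S_{b_\delta}$ with $1-|b|\approx1-|a|$ (together with Lemma \ref{1210-3}(iii) and the $\dD$ condition, exactly as in Proposition \ref{0421-1}) upgrades the estimate at $a_\delta$ to $\mu(S_a)\lesssim\om(S_a)^{1/p-1/q+1}$, which is $(iv)$. The delicate points here are the covering argument and checking that the exponent $1/p-1/q+1$ is preserved under the comparison $\om(S_{a_\delta})\approx\om(S_a)$; both follow from the doubling properties already recorded in Section 2.
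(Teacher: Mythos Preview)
Your proof is correct and follows the same route as the paper: $(iii)\Leftrightarrow(iv)$ via Theorem~A, $(iv)\Rightarrow(i)$ via Lemma~\ref{0413-6} and duality (Lemma~\ref{0413-7}), $(i)\Rightarrow(ii)$ by testing on reproducing kernels using \eqref{0412-1}, and $(ii)\Rightarrow(iv)$ via the lower kernel bound of Lemma~\ref{0413-1}. The only superfluous step is your covering argument at the end: since $a\mapsto a_\delta$ surjects onto $\{|w|>1-\delta\}$ with $\om(S_{a_\delta})\approx\om(S_a)$, the bound $\mu(S_{a_\delta})\lesssim\om(S_a)^{\frac1p-\frac1q+1}$ already gives $(iv)$ for all $|w|>1-\delta$, and the region $|w|\le1-\delta$ is trivial once $\mu(\BB)<\infty$.
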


\begin{proof} Since $1<p\leq q<\infty$, $\frac{1}{p}-\frac{1}{q}\geq 0$. By Theorem A, we have {\it (iii)}$\Leftrightarrow${\it (iv)} and
$$\|Id\|_{A_\om^s\to L_\mu^{s\left(\frac{1}{p}-\frac{1}{q}+1\right)}}^{s\left(\frac{1}{p}-\frac{1}{q}+1\right)}
\approx \left\|\frac{\mu(S_z)}{\om(S_z)^{\frac{1}{p}-\frac{1}{q}+1}}\right\|_{ L^\infty}.$$

{\it (i)}$\Rightarrow${\it (ii)}. Suppose that {\it (i)} holds. Let $q^\p=\frac{q}{q-1}$.  Since $\T_\mu:A_\om^p\to A_\om^q$ is bounded,   by H\"older's inequality and (\ref{0412-1}),
\begin{align*}
|\widetilde{\T_\mu}(z)| &=\frac{|\langle \T_\mu B_z^\om, B_z^\om \rangle_{A_\om^2}|}{\|B_z^\om\|_{A_\om^2}^2}
 \leq \frac{ \|\T_\mu\|_{A_\om^p\to A_\om^q} \|B_z^\om\|_{A_\om^p}  \|B_z^\om\|_{A_\om^{q^\p}} }{\|B_z^\om\|_{A_\om^2}^2}
 \approx \frac{\|\T_\mu\|_{A_\om^p\to A_\om^q}}{\om(S_z)^{\frac{1}{q}-\frac{1}{p}}},
\end{align*}
which implies that {\it (ii)} holds.

{\it (ii)}$\Rightarrow${\it (iv)}.  Suppose that {\it (ii)} holds, that is, $\frac{\widetilde{\T_\mu}(z)}{\om(S_z)^{\frac{1}{p}-\frac{1}{q}}}\in L^\infty$.
Let $\delta$ and $c$ be those in Lemma \ref{0413-1}. When $z\in\BB\backslash\{0\}$, by (\ref{0412-1}), Lemmas \ref{0413-1} and \ref{0413-6}, we have
\begin{align}
\frac{\widetilde{\T_\mu}(z)}{\om(S_z)} \approx \|B_z^\om\|_{A_\om^2}^2\widetilde{\T_\mu}(z)
&=\langle \T_\mu B_z^\om, B_z^\om \rangle_{A_\om^2}=\|B_z^\om\|_{L_\mu^2}^2  \label{0413-3}\\
&\geq \int_{S_{z_\delta}} |B_z^\om(\zeta)|^2d\mu(\zeta)\geq c^2\frac{\mu(S_{z_\delta})}{(\om(S_z))^2}.  \nonumber
\end{align}
Here $z_\delta=(1-\delta(1-|a|))\frac{a}{|a|}$. Since $1-|z_\delta|=\delta(1-|z|)$, by Lemma \ref{1210-3} we obtain
\begin{align}\label{0414-1}
\frac{\mu(S_{z_\delta})}{\om(S_{z_\delta})^{\frac{1}{p}-\frac{1}{q}+1}}
\lesssim \frac{\widetilde{\T_\mu}(z)\om(S_z)}{\om(S_{z_\delta})^{\frac{1}{p}-\frac{1}{q}+1}}
\approx \frac{\widetilde{\T_\mu}(z)}{\om(S_{z})^{\frac{1}{p}-\frac{1}{q}}}.
\end{align}
Then we get that {\it (iv)} holds.

{\it (iii)}$\Rightarrow${\it (i)}. Suppose {\it (iii)} holds. Then $\mu$ is a $1$-Carleson measure for $A_\om^{\frac{pq}{q-p+pq}}$.
For any $f\in A_\om^p$, we have $f\in A_\om^\frac{pq}{q-p+pq}$ and  $f\in L_\mu^1$.
If $g$ is a  polynomial, by Lemma \ref{0413-6}, Theorem A and H\"older's inequality, we have
\begin{align*}
|\langle \T_\mu f,g \rangle_{A_\om^2}|  &\leq \int_\BB |f(z)g(z)|d\mu(z)        \leq \|Id\|_{A_\om^{\frac{pq}{q-p+pq}}  \to L_\mu^1}\|fg\|_{A_\om^{\frac{pq}{q-p+pq}}}   \\
&\leq \|Id\|_{A_\om^{\frac{pq}{q-p+pq}}\to L_\mu^1} \|f\|_{A_\om^p} \|g\|_{A_\om^\frac{q}{q-1}}.
\end{align*}
Since polynomials are dense in  $A_\om^\frac{q}{q-1}$, by Lemma \ref{0413-7}, $\T_\mu: A_\om^p\to A_\om^q$ is bounded. The proof is complete.
\end{proof}

\begin{Theorem} Let $1<p\leq q<\infty$, $\om\in\dD$ and $\mu$ be a positive Borel measure on $\BB$. Then the following statements are equivalent.
\begin{enumerate}[(i)]
  \item $\T_\mu:A_\om^p\to A_\om^q$ is compact;
  \item $\lim\limits_{|z|\to 1}\frac{\widetilde{\T_\mu}(z)}{\om(S_z)^{\frac{1}{p}-\frac{1}{q}}}=0$;
  \item $\mu$ is a vanish $s\left(\frac{1}{p}-\frac{1}{q}+1\right)$-Carleson measure for $A_\om^s$, for some (equivalently for all ) $0<s <\infty$;
  \item $\lim\limits_{|z|\to 1}\frac{\mu(S_z)}{\om(S_z)^{\frac{1}{p}-\frac{1}{q}+1}}=0$.
\end{enumerate}
\end{Theorem}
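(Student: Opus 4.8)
The plan is to prove the compactness analogue of Theorem \ref{0414-2} by recycling its proof, replacing each boundedness estimate with its "little-oh" version. As in the bounded case, the equivalence \emph{(iii)}$\Leftrightarrow$\emph{(iv)} is immediate from Theorem A(ii), applied with the exponent $s\left(\frac1p-\frac1q+1\right)$; and the implication \emph{(i)}$\Rightarrow$\emph{(ii)} is an easy consequence of the normalized test functions. Indeed, set $b_z^\om=B_z^\om/\|B_z^\om\|_{A_\om^2}$. By (\ref{0412-1}), $\|b_z^\om\|_{A_\om^p}\approx\om(S_z)^{\frac1p-\frac12}$ and $\|b_z^\om\|_{A_\om^{q'}}\approx\om(S_z)^{\frac1{q'}-\frac12}=\om(S_z)^{\frac12-\frac1q}$, so the functions $f_z=\om(S_z)^{\frac12-\frac1p}b_z^\om$ are bounded in $A_\om^p$ and tend to $0$ uniformly on compact subsets of $\BB$ as $|z|\to1$ (since $B_w^\om(\zeta)$ is uniformly bounded for $\zeta$ in a compact set while $\|B_w^\om\|_{A_\om^2}\to\infty$). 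If $\T_\mu$ is compact, then $\|\T_\mu f_z\|_{A_\om^q}\to0$, and by duality (Lemma \ref{0413-7}, with the pairing $\langle\cdot,\cdot\rangle_{A_\om^2}$) together with H\"older's inequality,
\begin{align*}
\frac{\widetilde{\T_\mu}(z)}{\om(S_z)^{\frac1p-\frac1q}}
\approx \om(S_z)^{1-\frac1p-\frac1q}\,|\langle \T_\mu b_z^\om,b_z^\om\rangle_{A_\om^2}|
\lesssim \|\T_\mu f_z\|_{A_\om^q}\,\|b_z^\om\|_{A_\om^{q'}}\om(S_z)^{\frac1{q'}-\frac12-\left(\frac12-\frac1p\right)}\to 0,
\end{align*}
which after bookkeeping of exponents gives \emph{(ii)}.

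For \emph{(ii)}$\Rightarrow$\emph{(iv)} I would repeat verbatim the chain (\ref{0413-3})--(\ref{0414-1}) from the proof of Theorem \ref{0414-2}. Using Lemma \ref{0413-6}, $\widetilde{\T_\mu}(z)\om(S_z)\approx\|B_z^\om\|_{L_\mu^2}^2\geq \int_{S_{z_\delta}}|B_z^\om(\zeta)|^2d\mu(\zeta)\geq c^2\mu(S_{z_\delta})/\om(S_z)^2$ with $\delta,c$ from Lemma \ref{0413-1}, and since $1-|z_\delta|=\delta(1-|z|)$ Lemma \ref{1210-3}(ii)--(iii) gives $\om(S_{z_\delta})\approx\om(S_z)$; hence $\mu(S_{z_\delta})/\om(S_{z_\delta})^{\frac1p-\frac1q+1}\lesssim \widetilde{\T_\mu}(z)/\om(S_z)^{\frac1p-\frac1q}\to0$ as $|z|\to1$. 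As $z$ ranges over $\BB$ with $|z|\to1$, so does $z_\delta$, so this is exactly \emph{(iv)}.

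The remaining implication \emph{(iii)}$\Rightarrow$\emph{(i)} is the one requiring the compactness criterion Lemma \ref{1210-2}. Assume \emph{(iii)}, i.e. (taking $s=\frac{pq}{q-p+pq}$) that $\mu$ is a vanishing $1$-Carleson measure for $A_\om^t$ with $t=\frac{pq}{q-p+pq}$. Let $\{f_k\}$ be bounded in $A_\om^p$ with $f_k\to0$ uniformly on compact subsets. By Lemma \ref{1210-2} (applied to $\T_\mu:A_\om^p\to A_\om^q$, which is bounded by Theorem \ref{0414-2}) it suffices to show $\|\T_\mu f_k\|_{A_\om^q}\to0$. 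Using Lemma \ref{0413-7} to realize the norm by duality, $\|\T_\mu f_k\|_{A_\om^q}\approx\sup\{|\langle \T_\mu f_k,g\rangle_{A_\om^2}|:g\text{ a polynomial},\ \|g\|_{A_\om^{q'}}\leq1\}$, and by Lemma \ref{0413-6} and H\"older's inequality $|\langle\T_\mu f_k,g\rangle_{A_\om^2}|\leq\int_\BB|f_kg|\,d\mu\leq\|f_k\|_{L_\mu^t}\|g\|_{L_\mu^{t'}}$ where $\frac1t+\frac1{t'}=1$; note $t'=\frac{q'}{?}$ — more precisely one checks $\frac1t-\frac1p=\frac1{q'}$ so that $\|fg\|_{A_\om^t}\le\|f\|_{A_\om^p}\|g\|_{A_\om^{q'}}$, which is the same exponent bookkeeping already used in Theorem \ref{0414-2}. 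Thus it reduces to proving $\|f_k\|_{L_\mu^t}\to0$, i.e. that the identity $A_\om^t\to L_\mu^t$ sends the normalized-bounded, locally-uniformly-null sequence $\{f_k\}$ to a norm-null sequence — but this is precisely the content of the identity $Id:A_\om^t\to L_\mu^t$ being \emph{compact} (again via Lemma \ref{1210-2}), which holds by hypothesis \emph{(iii)} and Theorem A(ii). Chaining these, $\|\T_\mu f_k\|_{A_\om^q}\lesssim\|f_k\|_{L_\mu^t}\cdot\sup_{\|g\|_{A_\om^{q'}}\le1}\|g\|_{L_\mu^{t'}}\to0$, since $\mu$ is also a (vanishing, hence in particular bounded) $t'$-type Carleson measure for $A_\om^{q'}$ by Theorem A — the relevant exponent identity being $t'\cdot\frac1{q'}$-homogeneity, which one verifies is subsumed by \emph{(iii)} for the appropriate value of $s$. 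The main obstacle is this exponent bookkeeping: one must check that the single hypothesis "$\mu$ is a vanishing $s(\frac1p-\frac1q+1)$-Carleson measure for $A_\om^s$ for all $s$" simultaneously controls both the factor $\|f_k\|_{L_\mu^t}$ and the factor $\sup_g\|g\|_{L_\mu^{t'}}$; this works because the vanishing Carleson condition (\ref{0109-8})-type quantity $\mu(S_z)/\om(S_z)^{\frac1p-\frac1q+1}\to0$ is exponent-free once normalized, so it is inherited at every scale $s$, exactly as in the proof of Theorem \ref{0414-2}.
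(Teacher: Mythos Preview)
Your implications \emph{(iii)}$\Leftrightarrow$\emph{(iv)}, \emph{(i)}$\Rightarrow$\emph{(ii)} and \emph{(ii)}$\Rightarrow$\emph{(iv)} are correct and essentially match the paper's proof (the paper normalizes by $\|B_z^\om\|_{A_\om^p}$ rather than $\|B_z^\om\|_{A_\om^2}$, but your bookkeeping is equivalent).

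The implication \emph{(iii)}$\Rightarrow$\emph{(i)}, however, has a genuine gap. You set $t=\frac{pq}{q-p+pq}$ and attempt the H\"older split $\int_\BB |f_k g|\,d\mu\le\|f_k\|_{L_\mu^t}\|g\|_{L_\mu^{t'}}$ with $\frac1t+\frac1{t'}=1$. But from $\frac1t=\frac1p+\frac1{q'}\ge 1$ (equality iff $p=q$) you get $t\le 1$, so $t'$ is not a valid conjugate exponent and this H\"older inequality is meaningless when $p<q$. Your text already betrays the confusion: you write ``$t'=\frac{q'}{?}$'' and then in the next breath invoke $\|fg\|_{A_\om^t}\le\|f\|_{A_\om^p}\|g\|_{A_\om^{q'}}$, which is the H\"older inequality in $A_\om$-space from Theorem~\ref{0414-2}, not the $L_\mu$-split you just wrote down. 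The two are being conflated.

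The paper avoids this by a truncation argument: set $d\mu_r=\chi_{\{|z|\ge r\}}\,d\mu$, observe from \emph{(iv)} that $\sup_z\mu_r(S_z)/\om(S_z)^{\frac1p-\frac1q+1}\to 0$ as $r\to1$, and hence (by the \emph{norm} estimate in Theorem~\ref{0414-2}) $\|\T_{\mu_r}\|_{A_\om^p\to A_\om^q}\to 0$. For fixed $r$, the remaining piece $\T_{\mu-\mu_r}f_k$ is controlled pointwise by $\sup_{|\xi|<r}|f_k(\xi)|$ via Lemma~\ref{0313-1}, so $\|\T_{\mu-\mu_r}f_k\|_{A_\om^q}\to 0$. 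Combining gives compactness. Your duality idea can in fact be repaired, but with a \emph{different} exponent: take $a=\frac{pq+q-p}{q}>1$ (so $a\ge p$ and $a'\ge q'$); then $a/p=a'/q'=\frac1p-\frac1q+1$, so Theorem~A(ii) makes $Id:A_\om^p\to L_\mu^a$ compact and $Id:A_\om^{q'}\to L_\mu^{a'}$ bounded, and $\int|f_kg|\,d\mu\le\|f_k\|_{L_\mu^a}\|g\|_{L_\mu^{a'}}\to 0$ uniformly in $\|g\|_{A_\om^{q'}}\le 1$. As written, though, your argument does not go through.
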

\begin{proof}
By Theorem A, we have {\it (iii)}$\Leftrightarrow${\it (iv)}. By (\ref{0414-1}), we have {\it (ii)}$\Rightarrow${\it (iv)}.

{\it (i)}$\Rightarrow${\it (ii)}. Suppose that {\it (i)} holds.  Let $b_z^{\om,p}(w)=\frac{B_z^\om(w)}{\|B_z^\om\|_{A_\om^p}}$. When $|w|\leq r<1$, by (\ref{0412-1}) and Lemma \ref{0313-1}, we have
$$|b_z^{\om,p}(w)|\lesssim \om(S_z)^{1-\frac{1}{p}}\|B_{rz}^\om\|_{H^\infty}\approx \frac{\om(S_z)^{1-\frac{1}{p}}}{\om(S_{rz})}.$$
Then $\{b_z^{\om,p}\}$ is bounded in $A_\om^p$ and converges to 0 uniformly on compact subsets of $\BB$ as $|z|\rightarrow1$.
By Lemma \ref{1210-2}, we have $\lim\limits_{|z|\to 1} \|\T_\mu b_z^{\om,p}\|_{A_\om^q}=0$.
By (\ref{0412-1}) and H\"older's inequality, we have
\begin{align*}
\frac{\left|\widetilde{\T_\mu}(z)\right|}{\om(S_z)^{\frac{1}{p}-\frac{1}{q}}}
=\frac{|\langle \T_\mu B_z^\om, B_z^\om \rangle_{A_\om^2}|}{\om(S_z)^{\frac{1}{p}-\frac{1}{q}}\|B_z^\om\|_{A_\om^2}^2}
&\approx \frac{\left|\langle \T_\mu b_z^{\om,p}, B_z^{\om}  \rangle_{A_\om^2} \right|}{\om(S_z)^{-\frac{1}{q}}}  \\
&\leq \frac{\|\T_\mu b_z^{\om,p}\|_{A_\om^q}  \|B_z^\om\|_{A_\om^\frac{q}{q-1}}}{\om(S_z)^{-\frac{1}{q}}}
\approx \|\T_\mu b_z^{\om,p}\|_{A_\om^q},
\end{align*}
which implies that {\it (ii)} holds.

{\it (iii)} $\Rightarrow$ {\it (i)}.  Suppose that {\it (iii)} holds.
Let  $\{f_k\}_{k=1}^\infty$ be bounded in $A_\om^p$ and converge to 0 uniformly on compact subsets of $\BB$.
If $0<r<1$, let $d\mu_r=\chi_{r\leq |z|<1}d\mu$. By (12) in \cite{DjLsLxSy2019arxiv}, we have
 $$\lim_{r\to 1}\sup_{z\in\BB}\frac{\mu_r(S_z)}{\om(S_z)^{\frac{1}{p}-\frac{1}{q}+1}}=0.$$
 By the proof of {\it (iii)}$\Rightarrow${\it (i)} of Theorem \ref{0414-2}, we have
$\lim\limits_{r\to 1}\|\T_{\mu_r}\|_{A_\om^p\to A_\om^q}=0.$  For any fixed $0<r<1$, by Lemma \ref{0313-1} we have
 \begin{align*}
 \left|\T_{\mu-\mu_r}f_k (z)\right| &=\left| \int_{r\BB} f_k(\xi)\ol{B_z^\om(\xi)}d\mu(\xi) \right|  \\
 &\leq \mu(\BB) \|B_{rz}\|_{H^\infty}\sup_{|\xi|<r}|f_k(\xi)| \lesssim \frac{\mu(\BB)}{\om(S_{rz})}\sup_{|\xi|<r}|f_k(\xi)|.
 \end{align*}
Therefore, $\lim\limits_{k\to\infty}\|\T_{\mu-\mu_r} f_k\|_{A_\om^q}=0$ for any fixed $0<r<1$.

For any given $\varepsilon$, we can choose a fixed $r_\varepsilon>0$ such that $\|\T_{\mu_{r_\varepsilon}}\|_{A_\om^p\to A_\om^q}<\varepsilon$. Then we have
\begin{align*}
\lim_{k\to\infty}\|\T_\mu f_k\|_{A_\om^q}
&\leq \lim_{k\to\infty}\|\T_{\mu_{r_\varepsilon}}f_k  \|_{A_\om^q}   +\lim_{k\to\infty}\|\T_{\mu-\mu_{r_\varepsilon}} f_k\|_{A_\om^q}
\leq \varepsilon \sup_{k>0}\|f_k\|_{A_\om^p}.
\end{align*}
Since $\varepsilon$ is arbitrary, by Lemma \ref{1210-2}, $\T_\mu:A_\om^p\to A_\om^q$ is compact. The proof is complete.
\end{proof}

Recall that $\beta(\cdot,\cdot)$ is the Bergman metric and  $D(a,r)$  is the  Bergman metric ball at $a$ with radius $r>0$.
Then for all $a\in\BB$ and $z\in D(a,r)$, we  have $1-|z|\approx 1-|a|$.
Suppose  $\{a_k\}$ is a sequence in  $\BB$. It is  $\gamma-$separated if $\inf\limits_{k\neq j}\beta(a_k,a_j)>\gamma>0$.
 It is  a $\delta$-lattice if it is $\frac{\delta}{5}$-separated and   $\BB=\cup_{k=1}^\infty  D(a_k,5\delta)$.

Define the Rademacher function $r_j$ by $$r_j(t)=\textrm{sgn}(\sin(2^j\pi t)),\,\,j=1,2,\cdots.$$ The Khintchine's inequality is the following.\msk

\noindent{\bf  Khintchine's inequality.}  {\it For $0<p<\infty$, there exist constants $0<A_p\leq B_p<\infty$ such that, for all natural numbers $k$ and all complex numbers $c_1, c_2, \cdots , c_k,$ we have
$$A_p\left(\sum_{j=1}^k |c_j|^2\right)^{\frac{p}{2}}\leq\int_0^1\left|\sum_{j=1}^k c_jr_j(t)\right|^pdt\leq B_p\left(\sum_{j=1}^k |c_j|^2\right)^{\frac{p}{2}}.$$}

\begin{Lemma}\label{0424-1} Suppose $0<q<p<\infty$, $0<r<\infty$, $\om\in\R$ and $\mu$ is a positive Borel measure on $\BB$.
Let  $\widehat{\mu_r}(z)=\frac{\mu(D(z,r))}{\om(S_z)}$. Then $\mu$ is a $q-$Carleson measure for $A_\om^p$ if and only if
$$\int_\BB \widehat{\mu_r}(z)^\frac{p}{p-q}\om(z)dV(z)<\infty.$$
Moreover,
$$\|Id\|_{A_\om^p\to L_\mu^q}^q \approx \|\widehat{\mu_r}\|_{L_\om^s},\,\,\mbox{ where }\,\,\frac{1}{s}+\frac{q}{p}=1.$$
\end{Lemma}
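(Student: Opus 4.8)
The plan is to reduce the problem to the classical Luecking-type characterization of $q$-Carleson measures for $A_\om^p$ in the case $0<q<p<\infty$, using the discretization of $\BB$ by a $\delta$-lattice $\{a_k\}$ and the fact that $\om\in\R$ makes Carleson blocks, Bergman balls, and pseudo-hyperbolic balls essentially interchangeable (Proposition \ref{0421-1}). First I would fix a $\delta$-lattice $\{a_k\}$ with $\BB=\bigcup_k D(a_k,5\delta)$ and the $a_k$ being $\delta/5$-separated, chosen fine enough that each point of $\BB$ lies in only boundedly many of the enlarged balls $D(a_k, C r)$ for the relevant comparison constants. By standard subharmonicity estimates together with (\ref{0412-1}) and Lemma \ref{1210-3}, for $f\in H(\BB)$ one has $|f(z)|^p \lesssim \frac{1}{\om(S_z)}\int_{D(z,r)}|f(w)|^p\om(w)dV(w)$, and conversely $\int_{D(z,r)}|f|^p\om\,dV \approx \om(S_z)\,|f(z)|^p$ up to a dilation of the radius; these give the pointwise passage between $\int_\BB |f|^q d\mu$ and the discretized sums $\sum_k \mu(D(a_k,r))\,|f(a_k)|^q$ and, on the other side, $\sum_k \om(S_{a_k})\,|f(a_k)|^p \approx \|f\|_{A_\om^p}^p$.

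The core of the argument is the standard duality/Khintchine trick for $0<q<p$. For the "only if" direction, assuming $\mu$ is a $q$-Carleson measure for $A_\om^p$, I would test $Id:A_\om^p\to L_\mu^q$ against lacunary-type sums $f_t(w)=\sum_k c_k\, r_k(t)\, b_{a_k}^{\om,p}(w)$ built from the normalized kernels $b_{a_k}^{\om,p}=B_{a_k}^\om/\|B_{a_k}^\om\|_{A_\om^p}$, integrate in $t\in[0,1)$, and apply Khintchine's inequality on both sides. On the $A_\om^p$ side one uses that $\{b_{a_k}^{\om,p}\}$ behaves like an $\ell^p$-bounded system (again via (\ref{0412-1}) and finite overlap), and on the $L_\mu^q$ side one uses the lower estimate $|B_{a_k}^\om(z)|\gtrsim 1/\om(S_{a_k})$ on $D(a_k,r)$ (Lemma \ref{0422-1}, or the Carleson-block version Lemma \ref{0413-1}) to bound $\int_\BB |f_t|^q d\mu$ from below by $\sum_k (\mu(D(a_k,r))/\om(S_{a_k}))^{?}\cdots$; choosing the coefficients $c_k$ by a self-improving/duality argument (pairing $\ell^{p/q}$ against its conjugate $\ell^{s}$ with $\frac1s+\frac qp=1$) yields $\sum_k \widehat{\mu_r}(a_k)^{s}\,\om(S_{a_k})<\infty$, which by the finite-overlap comparison $\widehat{\mu_r}(z)\approx \widehat{\mu_{r'}}(a_k)$ for $z\in D(a_k,r')$ is equivalent to $\int_\BB \widehat{\mu_r}(z)^{p/(p-q)}\om(z)dV(z)<\infty$, noting $s=p/(p-q)$. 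For the "if" direction I would run the estimate forward: for $f\in A_\om^p$,
\begin{align*}
\int_\BB |f|^q\,d\mu &\lesssim \sum_k \mu(D(a_k,r))\,\sup_{D(a_k,r)}|f|^q
\lesssim \sum_k \frac{\mu(D(a_k,r))}{\om(S_{a_k})}\int_{D(a_k,Cr)}|f|^q\om\,dV \\
&= \sum_k \widehat{\mu_r}(a_k)\int_{D(a_k,Cr)}|f|^q\om\,dV,
\end{align*}
then apply Hölder with exponents $p/q$ and $s=p/(p-q)$ to split off $\big(\sum_k \widehat{\mu_r}(a_k)^{s}\om(S_{a_k})\big)^{1/s}$ against $\big(\sum_k \om(S_{a_k})^{-1}\int_{D(a_k,Cr)}|f|^q\om\,dV \cdot \om(S_{a_k})\big)^{?}$—more precisely bounding $\sum_k \big(\int_{D(a_k,Cr)}|f|^q\om\,dV\big)^{p/q}\om(S_{a_k})^{1-p/q} \lesssim \|f\|_{A_\om^p}^p$ by subharmonicity and finite overlap—to conclude $\|f\|_{L_\mu^q}^q\lesssim \|\widehat{\mu_r}\|_{L_\om^s}\,\|f\|_{A_\om^p}^q$.

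The main obstacle I anticipate is not any single estimate but the bookkeeping that makes the discretization genuinely equivalent to the continuous quantity: one must verify that $\widehat{\mu_r}(z)=\mu(D(z,r))/\om(S_z)$ does not depend (up to constants) on $r$, that $\int_\BB\widehat{\mu_r}(z)^{p/(p-q)}\om(z)dV(z)\approx\sum_k\widehat{\mu_r}(a_k)^{p/(p-q)}\om(S_{a_k})$, and that all the ball-radius dilations ($r\mapsto Cr$) incurred along the way are absorbed using $\om\in\R$ via Proposition \ref{0421-1} and Lemma \ref{1210-3}(iii). The Khintchine step in the "only if" direction is the one place requiring care, since one needs the normalized reproducing kernels to simultaneously form an almost-orthogonal $\ell^p$-bounded system in $A_\om^p$ and to be uniformly bounded below on their own Bergman balls; both facts are already available from (\ref{0412-1}), Lemma \ref{0313-1}, and Lemma \ref{0422-1}, so the argument should go through along the now-classical lines of Luecking's method adapted to doubling weights.
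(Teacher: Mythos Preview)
Your plan is essentially a self-contained reconstruction of Luecking's method for regular weights, and it would work. The paper, however, takes a much shorter route: it does not redo the Khintchine/duality argument at all. Instead it cites the characterization for the fixed radius $r=1$ directly from Hu's paper \cite{hu3} (Theorem~4 there, which already gives $\mu$ $q$-Carleson for $A_\om^p$ iff $\widehat{\mu_1}\in L_\om^{p/(p-q)}$ with the one-sided norm estimate), and cites Luecking \cite{Ld1986pems} for the reverse norm inequality. The only thing the paper actually proves is the $r$-independence $\|\widehat{\mu_s}\|_{L_\om^{p/(p-q)}}\approx\|\widehat{\mu_t}\|_{L_\om^{p/(p-q)}}$ for $0<s<t$, and it does this not by lattice bookkeeping but by introducing the averaging operator
\[
\mathcal{E}f(z)=\frac{1}{\om(D(z,t))}\int_{D(z,t)}f(\xi)\,\om(\xi)\,dV(\xi),
\]
showing $\mathcal{E}$ is bounded on $L_\om^{p/(p-q)}$ by interpolation between $L_\om^1$ (Fubini plus Proposition~\ref{0421-1}) and $L_\om^\infty$, and then checking pointwise that $\mathcal{E}\widehat{\mu_s}\gtrsim\widehat{\mu_t}$ via a geometric inclusion $D(\tau,\varepsilon)\subset D(z,t)\cap D(\eta,s)$.

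So both approaches are valid; yours is self-contained and reproves what the paper imports from \cite{hu3} and \cite{Ld1986pems}, while the paper's contribution here is really just the clean interpolation argument for radius-independence. If you want to follow the paper, you can drop the Khintchine machinery entirely for this lemma and reserve it for the Toeplitz theorem where it is genuinely needed.
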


\begin{proof}
Theorem 4 in \cite{hu3} shows that $\mu$ is a $q-$Carleson measure for $A_\om^p$ if and only if $ \widehat{\mu_1}(z)\in L_\om^\frac{p}{p-q}$, and $\|\widehat{\mu_1}\|_{L_\om^{\frac{p}{p-q}}}\lesssim \|Id\|_{A_\om^p\to L_\mu^q}^q.$
An analogue of (2.3) in \cite{Ld1986pems} shows that $\|\widehat{\mu_1}\|_{L_\om^{\frac{p}{p-q}}}\gtrsim \|Id\|_{A_\om^p\to L_\mu^q}^q.$
So, we only need to prove that, for $0<s<t<\infty$,
$\|\widehat{\mu_s}\|_{L_\om^\frac{p}{p-q}}\approx \|\widehat{\mu_t}\|_{L_\om^\frac{p}{p-q}}$.

It is obvious that $\|\widehat{\mu_s}\|_{L_\om^\frac{p}{p-q}}\leq \|\widehat{\mu_t}\|_{L_\om^\frac{p}{p-q}}$.
For any $f\in L_\om^1$, define
$$\mathcal{E} f(z)=\frac{\int_{D(z,t)}f(\xi)\om(\xi)dV(\xi)}{\om(D(z,t))}.$$
By Fubini's theorem and Proposition \ref{0421-1}, $\mathcal{E} :L_\om^1 \to L_\om^1$ is bounded. Obviously, $\mathcal{E} :L_\om^\infty \to L_\om^\infty$ is bounded.
By Theorems 1.32 and 1.33 in \cite{Zk2005}, $\mathcal{E} :L_\om^\frac{p}{p-q} \to L_\om^\frac{p}{p-q}$ is bounded.

There exists $\varepsilon=\varepsilon(s,t)$ such that, for all $z\in\BB$ and $\eta\in D(z,t)$,
$$D(\tau,\varepsilon)\subset D(z,t) \cap D(\eta,s)$$
for some  $\tau\in D(z,t)$. Using Proposition \ref{0421-1}, we have
\begin{align*}
\mathcal{E} \widehat{\mu_s}(z)&=\frac{1}{\om(D(z,t))}\int_{D(z,t)}\frac{\int_{D(\xi,s)}d\mu(\eta)}{\om(S_\xi)}  \om(\xi)dV(\xi)  \\
&\approx\frac{1}{\om(D(z,t))}\int_{\BB}\int_\BB\frac{\chi_{D(z,t)}(\xi)\chi_{D(\xi,s)}(\eta) \om(\xi)}{\om(D(\xi,s))}  dV(\xi)d\mu(\eta)\\
&\approx \frac{1}{\om(D(z,t))}\int_\BB\int_{D(z,t) \cap D(\eta,s)}\frac{ \om(\xi)}{\om(D(\eta,s))}  dV(\xi)d\mu(\eta)\\
&\geq  \frac{1}{\om(D(z,t))}\int_{D(z,t)}\int_{D(z,t) \cap D(\eta,s)}\frac{ \om(\xi)}{\om(D(\eta,s))}  dV(\xi)d\mu(\eta)\\
&\approx \frac{1}{(\om(D(z,t)))^2}\int_{D(z,t)}\int_{D(z,t) \cap D(\eta,s)}\om(\xi)  dV(\xi)d\mu(\eta)\\
&\gtrsim \frac{\mu(D(z,t))}{\om(D(z,t))}\approx \widehat{\mu_t}(z).
\end{align*}
Therefore,
$$\|\widehat{\mu_t}\|_{L_\om^\frac{p}{p-q}}\lesssim \|\mathcal{E} \widehat{\mu_s}\|_{L_\om^\frac{p}{p-q}}\leq \|\mathcal{E}  \|_{{L_\om^\frac{p}{p-q}}\to {L_\om^\frac{p}{p-q}}}
 \|\widehat{\mu_s}\|_{L_\om^\frac{p}{p-q}}.$$
 The proof is complete.
\end{proof}

\begin{Theorem}
Suppose $1<q<p<\infty$,  $\om\in\dD$, $r>0$ and $\mu$ be a positive Borel measure on $\BB$. Let $W_1(t)=\frac{\hat{\om}(t)}{1-t}$.
Then the following statements are equivalent.
\begin{enumerate}[(i)]
  \item $\T_\mu:A_\om^p\to A_\om^q$ is compact;
  \item $\T_\mu:A_\om^p\to A_\om^q$ is bounded;
  \item $\widehat{\mu_r}(z)=\frac{\mu(D(z,r))}{\om(S_z)}\in L_{W_1}^\frac{pq}{p-q}$;
  \item $Id:A_\om^p\to L_\mu^\frac{pq+q-p}{q}$ is bounded;
  \item $Id:A_\om^p\to L_\mu^\frac{pq+q-p}{q}$ is compact.
\end{enumerate}
Moreover,
\begin{align}\label{0429-1}
\|\T_\mu\|_{A_\om^p\to A_\om^q}\approx \|\widehat{\mu_r}\|_{L_{W_1}^{\frac{pq}{p-q}}}
\approx \|Id\|_{A_\om^p\to L_\mu^\frac{pq+q-p}{q}}^\frac{pq+q-p}{q}.
\end{align}
If $\om\in \R$, then  $\|\T_\mu\|_{A_\om^p\to A_\om^q}\approx \|\widetilde{T_\mu}\|_{L_{\om}^\frac{pq}{p-q}}. $
\end{Theorem}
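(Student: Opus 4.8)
The strategy is to establish the cycle of implications $(ii)\Rightarrow(iv)\Leftrightarrow(v)\Leftrightarrow(iii)$ and $(iii)\Rightarrow(i)\Rightarrow(ii)$, obtaining all the norm comparisons along the way; the implication $(i)\Rightarrow(ii)$ is trivial. Set $s=\frac{pq+q-p}{q}=\left(\frac1q-\frac1p+1\right)q$ and note $0<s<q$ precisely because $q<p$; this is the Carleson exponent that will recur throughout. The key structural observation is that, just as in the bounded-above case (Theorem \ref{0414-2}), the Berezin transform $\widetilde{\T_\mu}(z)$ is comparable to $\|B_z^\om\|_{A_\om^2}^2\,\|B_z^\om\|_{L_\mu^2}^2 = \om(S_z)^{-1}\int_\BB|B_z^\om(\zeta)|^2 d\mu(\zeta)$, and by Lemmas \ref{0413-1} and \ref{0422-1} this forces $\widetilde{\T_\mu}(z)\,\om(S_z)^{-1}\approx \mu(D(z,r))\,\om(S_z)^{-2} = \widehat{\mu_r}(z)\,\om(S_z)^{-1}$ up to the usual two-sided estimates, so that $\widetilde{\T_\mu}(z)\approx \widehat{\mu_r}(z)$. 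This is what yields the final sentence once $\om\in\R$, because then $W_1(t)=\hat\om(t)/(1-t)\approx \om(t)$ and $L_{W_1}^{pq/(p-q)}=L_\om^{pq/(p-q)}$.

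First I would prove $(iv)\Leftrightarrow(v)\Leftrightarrow(iii)$: the equivalence $(iv)\Leftrightarrow(iii)$ with the stated norm comparison is exactly Lemma \ref{0424-1} applied with the pair $(p,q)$ replaced by $\left(p,s\right)$, using $\frac{p}{p-s}=\frac{pq}{p-q}$ and noting that the weight $\om(z)dV(z)$ appearing there must be rewritten: by Lemma \ref{1210-3}(ii) and the definition of $\om(S_z)$ one has $\om(S_z)\approx (1-|z|)^n W_1(|z|)(1-|z|)\cdot$(constant), and tracking the Jacobian in the change of integrating weight converts $\|\widehat{\mu_r}\|_{L_\om^{p/(p-s)}}$ into $\|\widehat{\mu_r}\|_{L_{W_1}^{pq/(p-q)}}$ — this bookkeeping is the one genuinely fiddly computation and deserves care. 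The equivalence $(iv)\Leftrightarrow(v)$ (boundedness vs.\ compactness of the Carleson embedding for the \emph{sub-diagonal} exponent $s<p$) is standard: when $q<p$ a bounded Carleson embedding is automatically compact, proved via a normal-families/weak-convergence argument together with the fact that the characterizing quantity $\|\widehat{\mu_r}\|_{L_\om^{p/(p-s)}}$ being finite forces $\widehat{\mu_{r}}\to 0$ in an averaged sense; one can cite the analogue in \cite{hu3} or reprove it using Lemma \ref{1210-2}.

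Next, $(iii)\Rightarrow(i)$: assuming $\widehat{\mu_r}\in L_{W_1}^{pq/(p-q)}$, hence by the above that $Id:A_\om^p\to L_\mu^s$ is bounded, I factor $\T_\mu$ through the embedding. For a polynomial $g$, Lemma \ref{0413-6} gives $\langle\T_\mu f,g\rangle_{A_\om^2}=\int_\BB f\bar g\,d\mu$, and by H\"older with exponents chosen so that $fg\in L_\mu^s$ when $f\in A_\om^p,\ g\in A_\om^{q/(q-1)}$ — concretely $\frac1s=\frac1p+\frac1{q'}\cdot\frac{s}{?}$, i.e.\ the split dictated by $s<q$ — one bounds $|\langle\T_\mu f,g\rangle_{A_\om^2}|\le \|Id\|_{A_\om^p\to L_\mu^s}\,\|f\|_{A_\om^p}\,\|g\|_{A_\om^{q/(q-1)}}$; density of polynomials and the duality Lemma \ref{0413-7} then give $\T_\mu:A_\om^p\to A_\om^q$ bounded with the right norm bound. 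Compactness follows by the same truncation device used in the previous theorem (split $\mu=\mu_r+(\mu-\mu_r)$, use that $\|\widehat{(\mu_r)}\|\to 0$ and that $\T_{\mu-\mu_r}$ is a ``compactly supported kernel'' operator, then invoke Lemma \ref{1210-2}). Finally $(i)\Rightarrow(ii)$ is immediate and $(ii)\Rightarrow(iii)$ is the Berezin-transform lower bound described above: boundedness of $\T_\mu$ gives $\widetilde{\T_\mu}(z)\,\om(S_z)^{-(1/q-1/p)}\in L^\infty$? — no, here we must do better, since $q<p$ we instead test $\T_\mu$ against the normalized reproducing kernels to get $\|\T_\mu b_z^{\om,p}\|_{A_\om^q}\gtrsim \widehat{\mu_r}(z)\,\om(S_z)^{1/q}$ pointwise and then a Khintchine/atomic-decomposition argument over a $\delta$-lattice $\{a_k\}$, combined with $\sum_k \om(S_{a_k})^{-s/p}\mu(S_{a_k})\lesssim \|\T_\mu\|^{\text{something}}$, upgrades this to the $L_{W_1}^{pq/(p-q)}$ bound on $\widehat{\mu_r}$. \textbf{The main obstacle} is precisely this last step, $(ii)\Rightarrow(iii)$ in the sub-diagonal range: unlike the case $p\le q$, pointwise testing on a single kernel is not enough, and one needs the lattice-sum/Khintchine machinery (à la Luecking) to convert the operator norm into the integrability of $\widehat{\mu_r}$, with Proposition \ref{0421-1} supplying the comparability $\om(D(a_k,r))\approx\om(S_{a_k})$ that makes the discretization legitimate for $\om\in\dD$.
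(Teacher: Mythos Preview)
Your overall architecture matches the paper's exactly: $(iii)\Leftrightarrow(iv)$ via Lemma~\ref{0424-1}, $(iv)\Rightarrow(ii)$ via Lemma~\ref{0413-6} plus duality, $(iii)\Rightarrow(i)$ and $(iii)\Rightarrow(v)$ via truncation, and the crucial $(ii)\Rightarrow(iii)$ via Khintchine's inequality over a $\delta$-lattice of test functions $\sum c_k r_k(t)b_{a_k}^{\om,p}$ --- you correctly identify this last step as the heart of the matter. Two passages in your sketch are stated inaccurately, though, and would need to be rewritten before the argument goes through.

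First, the role of $W_1$. You describe passing from $L_\om$ to $L_{W_1}$ as a ``Jacobian'' bookkeeping, but that is not the point: Lemma~\ref{0424-1} is stated and proved only for $\om\in\R$, and a general $\om\in\dD$ need not be regular. The paper's device is that $W_1(t)=\hat\om(t)/(1-t)$ is \emph{always} regular with $\hat{W_1}=\hat\om$, hence $\om(S_z)\approx W_1(S_z)$ and $\|\cdot\|_{A_\om^p}\approx\|\cdot\|_{A_{W_1}^p}$ by Theorem~A. One therefore applies Lemma~\ref{0424-1} to $W_1$ directly --- not to $\om$ followed by a conversion. Second, in the H\"older step for $(iv)\Rightarrow(ii)$ you leave the exponents unresolved; what is needed is $|\langle\T_\mu f,g\rangle|\le\|f\|_{L_\mu^x}\|g\|_{L_\mu^{x'}}$ with $x=s$ and the key identity $\tfrac{x}{p}=\tfrac{x'}{q'}$, so that both embedding norms $\|Id\|_{A_{W_1}^p\to L_\mu^x}$ and $\|Id\|_{A_{W_1}^{q'}\to L_\mu^{x'}}$ reduce to the same Carleson quantity. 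Finally, your pointwise claim $\widetilde{\T_\mu}(z)\approx\widehat{\mu_r}(z)$ is only half right: the lower bound follows from Lemma~\ref{0422-1}, but the upper bound is $\widetilde{\T_\mu}(z)\lesssim P_\om^+(\widehat{\mu_r})(z)$, and one then invokes Theorem~C to bound $\|P_\om^+(\widehat{\mu_r})\|_{L_\om^{pq/(p-q)}}$ --- this is why the Berezin characterization is stated only for $\om\in\R$.
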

\begin{proof} {\it (i)}$\Rightarrow${\it (ii)} and {\it (v)}$\Rightarrow${\it (iv)} are obvious.
In the next proof, let $p^\p$ and $q^\p$ be the conjugate of $p$ and $q$, respectively, that is, $\frac{1}{p}+\frac{1}{p^\p}=1$ and $\frac{1}{q}+\frac{1}{q^\p}=1$.

{\it (iii)}$\Leftrightarrow${\it (iv)}.  Since ${W_1}(t)=\frac{\hat{\om}(t)}{1-t}$, by Proposition 5 in \cite{PjaRjSk2018jga}, we have ${W_1}\in\R$ and $\hat{{W_1}}=\hat{\om}$.
Using Theorem A and Lemma \ref{1210-3}, we obtain $\|\cdot\|_{A_\om^{p}}\approx \|\cdot\|_{A_{W_1}^{p}}$ for any $p>0$.
By Proposition \ref{0421-1} and Lemma \ref{1210-3}, we have
\begin{align}\label{0422-2}
{W_1}(D(z,r))\approx (1-|z|)^{n+1}{W_1}(z)\approx {W_1}(S_z)\approx (1-|z|)^n \hat{{W_1}}(z)\approx \om(S_z).
\end{align}
Therefore, by Lemma \ref{0424-1}, we see that  {\it (iii)}$\Leftrightarrow${\it (iv)} holds and
$$\|Id\|_{A_\om^p\to L_\mu^\frac{pq+q-p}{q}}^\frac{pq+q-p}{q}\approx
\|Id\|_{A_{W_1}^p\to L_\mu^\frac{pq+q-p}{q}}^\frac{pq+q-p}{q}\approx
 \|\widehat{\mu_r}\|_{L_{W_1}^{\frac{pq}{p-q}}}.$$

 {\it (ii)}$\Rightarrow${\it (iii)}. Let $r_0=r_0(\om)$ such that  Lemma \ref{0422-1} holds.  Suppose that $\{a_k\}_{k=1}^\infty$ is a $\delta-$lattice with $5\delta\leq r_0$.
By Lemmas 1.23 and  2.20 in \cite{Zk2005}, if $s>0$ is fixed,  for all $\eta\in D(z,s)$ and $z\in \BB$, we have
\begin{align}\label{0423-1}
1-|\eta|\approx 1-|z|  ~~~~\,\,\mbox{ and }\,\,V(D(\eta,s))\approx (1-|\eta|^2)^{n+1}.
\end{align}
 Then
 $$N_*=N_*(s,\delta)=\sup_{z\in\BB,\eta\in D(z,s)}\frac{V(D(z,s+\frac{\delta}{10})}{V(D(\eta,\frac{\delta}{10}))}<\infty.$$
 So, there are at most $N_*$ elements of $\{a_k\}$ contained in $D(z,s)$ for any $z\in\BB$.
 Moreover, by (\ref{0422-2})  we have
$$\sum_{k=1}^\infty W_1(D(a_k,s))\approx \sum_{k=1}^\infty W_1(D(a_k,\frac{\delta}{10}))\leq W_1(\BB).$$
 These facts are very important for our proof and we will use them repeatedly.

For any $c=\{c_k\}_{k=1}^\infty\in l^p$, let
\begin{align*}
F(w)=\sum\limits_{k=1}^\infty c_k b_{a_k}^{\om,p}(w)~~~\,\,\mbox{ and }\,\,F_t(w)=\sum\limits_{k=1}^\infty c_k r_k(t) b_{a_k}^{\om,p}(w),
\end{align*}
where $b_a^{\om,p}(w)=\frac{B_a^\om(w)}{\|B_a^\om\|_{A_\om^p}}$ and  $r_k(t)$ denotes the $k$th Rademacher function.
For any given $w\in\BB$, by H\"older's inequality, (\ref{0412-1}), (\ref{0422-2}) and Lemma \ref{0507-1}, we have
\begin{align*}
|F(w)|=\left| \sum_{k=1}^\infty c_k b_{a_k}^{\om,p} (w) \right|
&\leq \|c\|_{l^p} \left(\sum_{k=1}^\infty  \om(S_{a_k})|B_{a_k}^\om(w)|^\frac{p}{p-1}\right)^\frac{p-1}{p} \\
&\lesssim \|c\|_{l^p} \left(\sum_{k=1}^\infty  {W_1}(D(a_k,\frac{\delta}{10}))\|B_w^\om\|_{H^\infty}^\frac{p}{p-1}\right)^\frac{p-1}{p}
<\infty.
\end{align*}
So, $F\in H(\BB)$.
For all $g\in A_\om^{p^\p}$, by H\"older's inequality, (\ref{0412-1}) , the subharmonicity of $|g|^{p^\p}$ and Lemma \ref{0507-1}, we have
\begin{align*}
\left|\langle g,F \rangle_{A_\om^2}\right|
&=\left|\sum_{k=1}^\infty \ol{c_k} \frac{g(a_k)}{\|B_{a_k}^\om\|_{A_\om^p}} \right|
\lesssim \|c\|_{l^p}\left(\sum_{k=1}^\infty \frac{|g(a_k)|^{p^\p}}{\|B_{a_k}^\om\|_{A_{W_1}^p}^{p^\p}}\right)^\frac{1}{p^\p}\\
&\lesssim \|c\|_{l^p}\left(\sum_{k=1}^\infty \int_{D(a_k,\frac{\delta}{10})}|g(z)|^{p^\p}{W_1}(z)dV(z)\right)^\frac{1}{p^\p}\\
&\lesssim  \|c\|_{l^p}\|g\|_{A_{W_1}^{p^\p}} \approx \|c\|_{l^p}\|g\|_{A_\om^{p^\p}}.
\end{align*}
By Lemma \ref{0413-7}, we have $F\in A_\om^p$ and $\|F\|_{A_\om^p}\lesssim \|c\|_{l^p}$.
Thus, $\|F_t\|_{A_\om^p}\lesssim \|\{c_kr_k(t)\}_{k=1}^\infty\|_{l^p}$.

For brief, let $\|\T _\mu\|=\|\T _\mu\|_{A_\om^p\to A_\om^q}$ and $\chi_E$ be the characterization function of a Borel set $E$ in $\BB$.
Then for all $z\in \BB$, we have $\sum_{k=1}^\infty \chi_{D(a_k,s)}(z)\leq N_*$.
By  Fubini's theorem and Khintchine's inequality, we get
\begin{align*}
\|\T _\mu\|^q\|c\|_{l^p}^q
&\geq \int_0^1 \|\T _\mu\|^q\|\{c_k r_k(t)\}_{k=1}^\infty\|_{l^p}^q dt
 \geq \int_0^1 \|\T _\mu\|^q \|F_t\|_{A_\om^p}^q dt  \\
 &\geq\int_0^1  \|\T _\mu F_t\|_{A_\om^q}^q dt
 \approx\int_\BB \int_0^1 \left| \sum_{k=1}^\infty c_k r_{k}(t) \T_\mu b_{a_k}^{\om,p}(z)      \right|^q   dt W_1(z)dV(z)   \\
 &\gtrsim \int_\BB  \left| \sum_{k=1}^\infty |c_k|^2 |\T_\mu b_{a_k}^{\om,p}(z)|^2      \right|^\frac{q}{2}   W_1(z)dV(z)  \\
 &\geq \int_\BB  \left| \sum_{k=1}^\infty |c_k|^2 |\T_\mu b_{a_k}^{\om,p}(z)|^2\chi_{D(a_k,s)}(z)      \right|^\frac{q}{2}   W_1(z)dV(z)  \\
 &\gtrsim \sum_{k=1}^\infty  |c_k|^q \int_{D(a_k,s)} |\T_\mu b_{a_k}^{\om,p}(z)|^q W_1(z)dV(z).
\end{align*}

If $s\leq r_0$, by the subharmonicity of  $|\T_\mu b_{a_k}^{\om,p}|^q$, Lemma \ref{0422-1},  (\ref{0412-1}), (\ref{0422-3}), and (\ref{0422-2}), we have
\begin{align*}
\int_{D(a_k,s)}|\T_\mu b_{a_k}^{\om,p}(z)|^q W_1(z)dV(z)
&\gtrsim W_1(D(a_k,s))|\T_\mu (b_{a_k}^{\om,p})(a_k)|^q  \\
&\approx \frac{\om(S_{a_k})}{\|B_{a_k}^\om\|_{A_\om^p}^q}\left(\int_\BB |B_{a_k}^\om(\xi)|^2d\mu(\xi)\right)^q   \\
&\geq  \frac{\om(S_{a_k})}{\|B_{a_k}^\om\|_{A_\om^p}^q}\left(\int_{D(a_k,s)} |B_{a_k}^\om(\xi)|^2d\mu(\xi)\right)^q   \\
&\gtrsim \frac{\om(S_{a_k})}{\|B_{a_k}^\om\|_{A_\om^p}^q}|B_{a_k}^\om(a_k)|^{2q}(\mu(D(a_k,s)))^q  \\
&\approx \left(\frac{\mu(D(a_k,s))}{\om(S_{a_k})^{1+\frac{1}{p}-\frac{1}{q}}} \right)^q.
\end{align*}
Therefore, if $0<s\leq r_0$, for all $c\in l^p$, we have
\begin{align}\label{0423-2}
\sum_{k=1}^\infty  |c_k|^q \left(\frac{\mu(D(a_k,s))}{\om(S_{a_k})^{1+\frac{1}{p}-\frac{1}{q}}}\right)^q \lesssim \|\T _\mu\|^q\|c\|_{l^p}^q.
\end{align}

If $s>r_0$, by assumption, $\{a_k\}$ is  a $\delta-$lattice with $5\delta \leq r_0$.
For any $a_k$, we can find $N_k$ (maybe $N_k=\infty$) elements of $\{a_k\}$, write as $a_{k,1},a_{k,2},\cdots,a_{k,N_k}$ such that,
$$D(a_k,s)\subset\cup_{j=1}^{N_k}D(a_{k,j},5\delta),$$
and
$$D(a_k,s)\cap D(a_{k,j}5\delta)\neq \O\,\,\mbox{ for }\,\,j=1,2,\cdots,N_k.$$
Thus, $a_{k,j}\in D(a_k,s+5\delta)$ and $D(a_{k,j},\frac{\delta}{10})\subset D(a_k,s+6\delta)$.
By (\ref{0422-2}) and (\ref{0423-1}), we have
$$N_\#=\sup\limits_{k\geq 1}{N_k}<\infty,\,\,\mbox{and}\,\,\om(S_{a_k})\approx \om(S_{a_{k,j}}).$$
At the same time, we should note that, for $k_1\neq k_2$, we may have
$$\{a_{k_1,j}\}_{j=1}^{N_{k_1}}\cap\{a_{k_2,j}\}_{j=1}^{N_{k_2}}\neq \O.$$
If $a_{k_0}$ appears  in some $\{a_{k,j}\}_{j=1}^\infty$,
for convenience, let $a_{k_0}=a_{k_1,1}=\cdots=a_{k_t,1}$ with $k_1<k_2<\cdots<k_t$.
Then for $j=1,2,\cdots,t$, we have
$$a_{k_0}=a_{k_j,1}\in D(a_{k_j},s+5\delta),\,\,\mbox{i,e.}\,\, a_{k_j}\in D(a_{k_0},s+5\delta).$$
Since $\{a_k\}$ is a $\delta$-lattice, there exists $N_\dag<\infty$, we always have $t\leq N_\dag$.

For convenience, if $N_k<N_\#$, let $\frac{\mu(D(a_{k,j},s))}{\om(S_{a_{k,j}})^{1+\frac{1}{p}-\frac{1}{q}}}=0$, for $j=N_k+1,\cdots,N_\#$.
By (\ref{0423-2}), we have
\begin{align*}
\sum_{k=1}^\infty  |c_k|^q \left(\frac{\mu(D(a_k,s))}{\om(S_{a_k})^{1+\frac{1}{p}-\frac{1}{q}}}\right)^q
&\lesssim \sum_{k=1}^\infty\sum_{j=1}^{N_\#}  |c_k|^q \left(\frac{\mu(D(a_{k,j},r_0))}{\om(S_{a_{k,j}})^{1+\frac{1}{p}-\frac{1}{q}}}\right)^q   \\
&=\sum_{j=1}^{N_\#} \sum_{k=1}^\infty  |c_k|^q\left(\frac{\mu(D(a_{k,j},r_0))}{\om(S_{a_{k,j}})^{1+\frac{1}{p}-\frac{1}{q}}}\right)^q
\lesssim \|\T _\mu\|^q\|c\|_{l^p}^q.
\end{align*}
So, (\ref{0423-2}) holds  for all  $\{c_k\}\in l^p$, $\delta$-lattice $\{a_k\}$  with $5\delta\leq r_0$ and any fixed $s>0$.
Using the facts $\|\{c_k\}_{k=1}^\infty\|_{l^p}^q=\|\{c_k^q\}_{k=1}^\infty\|_{l^\frac{p}{q}}$
 and  $(l^{\frac{p}{q}})^*=l^\frac{p}{p-q}$, we get
 \begin{align*}
  \sum_{k=1}^\infty  \left(\frac{\mu(D(a_k,s))}{\om(S_{a_k})^{1+\frac{1}{p}-\frac{1}{q}}}\right)^\frac{pq}{p-q}
\leq C(s,\delta) \|\T _\mu\|^\frac{pq}{p-q}.
\end{align*}

For any fixed $r>0$, choose $s=s(r,\delta)$ such that $D(z,r)\subset D(a_j,s)$ for all $z\in D(a_j,5\delta)$ and $j\in\N$.
By (\ref{0423-1}),  we have
\begin{align*}
\|\widehat{\mu_r}\|_{L_{W_1}^\frac{pq}{p-q}}^\frac{pq}{p-q}
&\leq \sum_{j=1}^\infty \int_{D(a_j,5\delta)} \left(\frac{\mu(D(z,r))}{\om(S_z)}\right)^\frac{pq}{p-q}W_1(z)dV(z)  \\
&\approx\sum_{j=1}^\infty \frac{W_1(a_j)}{\left((1-|a_j|)^{n+1}W_1(a_j)\right)^\frac{pq}{p-q}}   \int_{D(a_j,5\delta)} \mu(D(z,r))^\frac{pq}{p-q}dV(z)  \\
&\lesssim \sum_{j=1}^\infty \frac{\mu(D(a_j,s))^\frac{pq}{p-q}}{\left((1-|a_j|)^{n+1}W_1(a_j)\right)^{\frac{pq}{p-q}-1}}
\approx \sum_{j=1}^\infty \left(\frac{\mu(D(a_j,s))^\frac{pq}{p-q}}{\om(S_{a_j})^{1+\frac{1}{p}-\frac{1}{q}}}\right)^{\frac{pq}{p-q}}
\lesssim \|\T_\mu\|^\frac{pq}{p-q}.
\end{align*}
Therefore, {\it (iii)} holds.

 {\it (iv)}$\Rightarrow${\it (ii)}. Suppose that $Id:A_\om^p\to L_\mu^\frac{pq+q-p}{q}$ is bounded.
For any $f\in A_\om^p$, we have $f\in L_\mu^1$. Let $x=\frac{pq+q-p}{q}$.
 Keep $\|\cdot\|_{A_\om^p}\approx \|\cdot\|_{A_{W_1}^p}$ and $\|\cdot\|_{A_\om^{q^\p}}\approx \|\cdot\|_{A_{W_1}^{q^\p}}$ in mind.
Then for all polynomial $g$,
by Lemma \ref{0413-6}, H\"older's inequality, Lemma \ref{0424-1} together with the equality $\frac{x}{p}=\frac{x^\p}{q^\p}$, we have
\begin{align*}
\left|\langle \T_\mu f,g\rangle_{A_\om^2}\right|
&\leq \int_\BB |f(z)g(z)|d\mu(z)\leq \|f\|_{L_\mu^x}\|g\|_{L_\mu^{x^\p}}  \\
&\leq \|Id\|_{A_{W_1}^p\to L_\mu^x}\|Id\|_{A_{W_1}^{q^\p}\to L_\mu^{x^\p}}\|f\|_{A_{W_1}^p}\|g\|_{A_{W_1}^{q^\p}}  \\
&\approx \|Id\|_{A_{W_1}^p\to L_\mu^x}^x \|f\|_{A_{W_1}^p}\|g\|_{A_{W_1}^{q^\p}}
\approx \|Id\|_{A_\om^p\to L_\mu^x}^x \|f\|_{A_\om^p}\|g\|_{A_\om^{q^\p}}.
\end{align*}
Since polynomials are dense in $A_\om^{q^\p}$ and $(A_\om^q)^*\simeq   A_\om^{q^\p}$,
we see that $\T_\mu:A_\om^p\to A_\om^q$ is bounded and $\|\T_\mu\|_{A_\om^p\to L_\mu^q}\lesssim
 \|Id\|_{A_\om^p\to L_\mu^x}^x$.

{\it (iii)}$\Rightarrow${\it (v)}. Suppose that {\it (iii)} holds, that is, $\widehat{\mu_r}(z)=\frac{\mu(D(z,r))}{\om(S_z)}\in L_{W_1}^\frac{pq}{p-q}$.
For brief, let  $x=\frac{pq+q-p}{q}$ and $s\BB=\{z\in\BB:|z|<s\}$ for $0<s<1$.
Since $r>0$ is fixed, there exists a $t=t(s)\in(0,1)$, such that, for all $\xi\in \BB\backslash s\BB$ and $\eta\in D(\xi,r)$, we have $|\eta|>t$.
Moreover, we can assume $\lim\limits_{s\to 1} t(s)=1$.
For every $\varepsilon>0$, by {\it (iii)} and $\lim\limits_{s\to 1} t(s)=1$,  there exists a $s>0$ such that
\begin{align}\label{0429-2}
\left(\int_{\BB\backslash t\BB}  \left(\frac{\mu(D(\eta,r))}{W_1(D(\eta,r))}\right)^{\frac{pq}{p-q}} W_1(\eta)dV(\eta)\right)^{\frac{p-q}{pq}}<\varepsilon.
\end{align}

 Suppose $\{g_k\}_{k=1}^\infty$ is bounded in $A_{\om}^p$ and   converges to 0 uniformly on compact subset of $\BB$.
By the subharmonicity of $|g_k|^x$, Fubini's theorem, H\"older's inequality, we have
\begin{align*}
\int_{\BB\backslash s\BB} |g_k(\xi)|^x d\mu(\xi)
&\lesssim \int_{\BB\backslash s\BB} \frac{1}{W_1(D(\xi,r))}\int_{D(\xi,r)}|g_k(\eta)|^x W_1(\eta)dV(\eta) d\mu(\xi) \\
&\leq \int_{\BB\backslash t\BB}  |g_k(\eta)|^x W_1(\eta)dV(\eta)\int_{D(\eta,r)} \frac{1}{W_1(D(\xi,r))}  d\mu(\xi)  \\
&\approx \int_{\BB\backslash t\BB}  |g_k(\eta)|^x  \frac{\mu(D(\eta,r))}{W_1(D(\eta,r))} W_1(\eta)dV(\eta)\\
&\leq \|g_k\|_{A_{W_1}^p}^x\left(\int_{\BB\backslash t\BB}  \left(\frac{\mu(D(\eta,r))}{W_1(D(\eta,r))}\right)^{\frac{pq}{p-q}} W_1(\eta)dV(\eta)\right)^{\frac{p-q}{pq}}.
\end{align*}
 So,
 \begin{align*}\lim_{k\to\infty}\|g_k\|_{L_\mu^x}^x
 &=\left(\int_{s\BB}+\int_{\BB\backslash s\BB}\right) |g_k(\xi)|^x d\mu(\xi)  \\
 &\leq \lim_{k\to\infty}\sup_{z\in s\BB}|g_k(z)|^x\mu(\BB)+\varepsilon \|g_k\|_{A_{W_1}^p}^x
 \leq \varepsilon \|g_k\|_{A_{W_1}^p}^x
 \lesssim \varepsilon \|g_k\|_{A_{\om}^p}^x.
 \end{align*}
 By Lemma \ref{1210-2}, we get that $Id:A_\om^p\to L_\mu^\frac{pq+q-p}{q}$ is compact.

   {\it (iii)}$\Rightarrow${\it (i)}.
Let $d\mu_t=\chi_{\BB\backslash t\BB}d\mu$ and $\widehat{(\mu_t)}_{r}(z)=\frac{\mu_t(D(z,r))}{\om(S_z)}$.
By Lemma \ref{0313-1} and (\ref{0429-1}), we have
\begin{align*}
\|\T_\mu g_k\|_{A_\om^q}
&=\left(\int_\BB\left|\int_\BB g_k(\xi)\ol{B_z^\om(\xi)}d\mu(\xi)\right|^q\om(z)dV(z)\right)^\frac{1}{q}   \\
&= \left(\int_\BB\left|\left(\int_{t\BB}+\int_{\BB\backslash t\BB}\right) g_k(\xi)\ol{B_z^\om(\xi)}d\mu(\xi)\right|^q\om(z)dV(z)\right)^\frac{1}{q}  \\
&\lesssim \sup_{|\xi|\leq t}\frac{|g_k(\xi)|}{\om(S_\xi)} +\|\T_{\mu_t}\|_{A_\om^p\to A_\om^q}\|g_k\|_{A_\om^p}
\lesssim \sup_{|\xi|\leq t}\frac{|g_k(\xi)|}{\om(S_\xi)} +\|\widehat{(\mu_t)}_{r}\|_{L_{W_1}^{\frac{pq}{p-q}}}\|g_k\|_{A_\om^p}.
\end{align*}
By (\ref{0429-2}), we have
$$\lim_{k\to \infty}\|\T_\mu g_k\|_{A_\om^q}\lesssim \varepsilon \sup\|g_k\|_{A_\om^p}.$$
 By Lemma \ref{1210-2}, $\T_\mu:A_\om^p\to A_\om^q$ is compact.

  Suppose $\om\in\R$ and {\it (iii)} holds. Let $h$ be a positive subharmonic function in $\BB$. Then Lemma \ref{1210-3} and Fubini's theorem yield
\begin{align*}
\int_\BB h(z)d\mu(z)&\lesssim \int_\BB\frac{1}{(1-|z|^2)^{n+1}}\int_{D(z,r)}h(\xi)dV(\xi)d\mu(z)   \\
&\approx \int_\BB\int_{D(z,r)}h(\xi)\frac{\om(\xi)}{\om(S_\xi)}dV(\xi)d\mu(z)
=\int_\BB h(\xi)\widehat{\mu_r}(\xi)\om(\xi)dV(\xi).
\end{align*}
Then Lemma \ref{0313-1} and (\ref{0412-1}) yield
\begin{align*}
\widetilde{\T_\mu}(z)&=\int_\BB |b_z^\om(\xi)|^2d\mu(z)\lesssim  \int_\BB |b_z^\om(\xi)|^2\widehat{\mu_r}(\xi)\om(\xi)dV(\xi)  \\
&\lesssim \frac{\|B_z^\om\|_{H^\infty}}{\|B_z^\om\|_{A_\om^2}^2}\int_\BB |B_z^\om(\xi)| \widehat{\mu_r}(\xi)\om(\xi)dV(\xi)
\approx P_\om^+(\widehat{\mu_r})(z).
\end{align*}
By Theorem C, $P_\om^+:L_\om^\frac{pq}{p-q}\to L_\om^\frac{pq}{p-q}$ is bounded. Then
$\|\widetilde{\T_\mu}\|_{L_\om^\frac{pq}{p-q}}\lesssim \|\widehat{\mu_r}\|_{L_\om^\frac{pq}{p-q}}.$

Assume $\om\in\R$, $\widetilde{\T_\mu}\in L_\om^\frac{pq}{p-q}$ and $t\in (0,r_0)$, where $r_0=r_0(\om)$ is that of Lemma \ref{0422-1}.
By the proof of Lemma \ref{0424-1}, we have $\|\widehat{\mu_t}\|_{L_{\om}^\frac{pq}{p-q}}\approx \|\widehat{\mu_r}\|_{L_{\om}^\frac{pq}{p-q}}$.
Then (\ref{0412-1}) gives
\begin{align*}
\widetilde{\T_\mu}(z)\geq \int_{D(z,t)}|b_z^\om(\xi)|^2d\mu(\xi)\approx |b_z^\om(z)|^2\mu(D(z,t))\approx \widehat{\mu_t}(z).
\end{align*}
Since $\om\in\R$,
$$\|\widehat{\mu_r}\|_{L_{\om}^\frac{pq}{p-q}}\approx\|\widehat{\mu_t}\|_{L_{\om}^\frac{pq}{p-q}}\lesssim \|\widetilde{\T_\mu}\|_{L_\om^\frac{pq}{p-q}}.$$
By (\ref{0429-1}), $\|\widetilde{\T_\mu}\|_{L_\om^\frac{pq}{p-q}}\approx \|\T_\mu\|_{A_\om^p\to A_\om^q}$.
The proof is complete.
\end{proof}

\section{Schatten class  Toeplitz operators}

 In this section, we will define a new kind of Dirichlet spaces and investigate the Schatten class Toeplitz operators on them.
 As an application,  we will characterize the Schatten class of $\T_\mu:A_\om^2\to A_\om^2$ with $\om\in\hD$.

Suppose $\mathcal{H}$ is a Hilbert space which is separable and $T:\mathcal{H}\to \mathcal{H}$ is compact.
Let $k=0,1,2,\cdots$ and
$$\lambda_k(T)=\inf\{\|T-R\|_{\mathcal{H}\to\mathcal{H}}:\mbox{rank}(R)\leq k\}.$$
Obviously,
$$\|T\|_{\mathcal{H}\to\mathcal{H}}=\lambda_0(T)\geq \lambda_1(T)\geq \lambda_2(T)\geq \cdots\geq 0.$$
If $\{\lambda_k(T)\}_{k=0}^\infty\in l^p$ for some $p\in(0,\infty)$, we say that $T$ belongs to the Schatten $p$-class, denoted by $T\in \mathcal{S}_p(\mathcal{H})$.
With respect to the norm $|T|_p=\|\{\lambda_k(T)\}_{k=0}^\infty\|_{ l^p}$, $ \mathcal{S}_p(\mathcal{H})$ is a Banach space when $1\leq p<\infty$. More information about $\mathcal{S}_p(\mathcal{H})$ can be seen in \cite[Chapter 1]{zhu}.

Recall  that for any $\xi\in\SS$ and $0<r<\sqrt{2}$,
  $$Q(\xi,r)=\{\eta\in \SS:|1-\langle \xi,\eta\rangle|\leq r^2\}.$$

\begin{Lemma}\label{0529-1}
For any $0<r<1$, there exist   $\xi_{r,1},\xi_{r,2},\cdots,\xi_{r,N_r}$  in $\SS$ such that
\begin{enumerate}[(i)]
  \item $Q(\xi_{r,i},r)\cap Q(\xi_{r,j},r)=\O$, if $1\leq i<j\leq N_r$;
  \item $\SS=\cup_{i=1}^{N_r}Q(\xi_{r,i},2r)$;
  \item $N_r\approx r^{-2n}$.
\end{enumerate}
Moreover, there exist $Q_{r,j}(j=1,2,\cdots,N_r)$ such that
\begin{enumerate}[(i)]
  \item[(iv)] $Q(\xi_{r,j},r)\subset Q_{r,j}\subset Q(\xi_{r,j},2r)$ for all $1\leq j\leq N_r$;
  \item[(v)] $\SS=\cup_{j=1}^{N_r} Q_{r,j};$
  \item[(vi)] $Q_{r,i}\cap Q_{r,j}=\O$ when $1\leq i<j\leq N_r$.
\end{enumerate}
\end{Lemma}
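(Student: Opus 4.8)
The plan is to obtain the points $\xi_{r,j}$ by a maximal nonisotropic packing argument and then to disjointify the resulting cover. Fix $0<r<1$ and consider all finite families $\{\xi_1,\dots,\xi_m\}\subset\SS$ for which the closed nonisotropic balls $Q(\xi_1,r),\dots,Q(\xi_m,r)$ are pairwise disjoint. Since $Q(\xi,r)=\{\eta\in\SS:|1-\langle\xi,\eta\rangle|\le r^2\}$ and $r^2<1$, the classical comparability $\sigma(Q(\xi,t))\approx t^{2n}$ (valid for $0<t\le 1$; see \cite{Rw1980,Zk2005}) gives $\sigma(Q(\xi,r))\gtrsim r^{2n}$, so every such disjoint family has at most $\lfloor C r^{-2n}\rfloor$ members. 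Hence there is a family of maximum cardinality; take $\{\xi_{r,1},\dots,\xi_{r,N_r}\}$ to be one. Then (i) holds by construction, and the upper bound $N_r\lesssim r^{-2n}$ follows from $\sum_{j}\sigma(Q(\xi_{r,j},r))\le\sigma(\SS)=1$ together with $\sigma(Q(\xi_{r,j},r))\gtrsim r^{2n}$.

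For (ii) I would use maximality and the fact (recalled above) that $d(\xi,\tau)=|1-\langle\xi,\tau\rangle|^{1/2}$ is a metric. If some $\eta\in\SS$ lay outside every $Q(\xi_{r,j},2r)$, then $d(\eta,\xi_{r,j})>2r$ for all $j$, and the triangle inequality would force $Q(\eta,r)$ to be disjoint from each $Q(\xi_{r,j},r)$ (a common point $\zeta$ would give $d(\eta,\xi_{r,j})\le d(\eta,\zeta)+d(\zeta,\xi_{r,j})\le 2r$), so adjoining $\eta$ would produce a larger disjoint family, contradicting maximality. Thus $\SS=\bigcup_{j=1}^{N_r}Q(\xi_{r,j},2r)$. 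For the lower bound in (iii): when $r\le 1/2$ one has $2r\le 1$, hence $\sigma(Q(\xi_{r,j},2r))\lesssim r^{2n}$ and $1=\sigma(\SS)\le\sum_j\sigma(Q(\xi_{r,j},2r))\lesssim N_r\,r^{2n}$, i.e. $N_r\gtrsim r^{-2n}$; when $1/2<r<1$ we have $r^{-2n}\le 2^{2n}$ while $N_r\ge 1$, so $N_r\gtrsim r^{-2n}$ trivially. Together with the upper bound this gives $N_r\approx r^{-2n}$.

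For the ``moreover'' part I would disjointify the cover while protecting the small balls. Define $j:\SS\to\{1,\dots,N_r\}$ by: if $\eta\in Q(\xi_{r,k},r)$ for some $k$ (which is then unique by (i)), set $j(\eta)=k$; otherwise let $j(\eta)$ be the least index $k$ with $\eta\in Q(\xi_{r,k},2r)$, which is well defined by (ii). Put $Q_{r,j}=\{\eta\in\SS:j(\eta)=j\}$. Each $Q_{r,j}$ is Borel, being cut out by finitely many Borel conditions; the sets partition $\SS$ because every $\eta$ has exactly one value $j(\eta)$, giving (v) and (vi). Moreover $Q(\xi_{r,j},r)\subset Q_{r,j}$ by the first rule, and $Q_{r,j}\subset Q(\xi_{r,j},2r)$ since in either rule the assigned point lies in $Q(\xi_{r,j},2r)$; this is (iv).

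The argument is essentially bookkeeping and no step is deep. The one genuinely external ingredient is the comparability $\sigma(Q(\xi,t))\approx t^{2n}$ of surface measures of nonisotropic balls, which I would quote from \cite{Rw1980} (or \cite{Zk2005}). The only points needing a little care are the distinction between closed balls and strict separation of centres (handled by the $>2r$ estimate above, which uses that $d$ is a genuine metric) and the degenerate regime where $2r$ is not small, which is dispatched by the trivial bound $N_r\ge 1$.
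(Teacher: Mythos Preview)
Your proof is correct and follows essentially the same approach as the paper: a maximal $r$-separated family for (i)--(iii) (with the same volume-comparison $\sigma(Q(\xi,t))\approx t^{2n}$ from \cite{Zk2005}), and a disjointification that first protects the small balls $Q(\xi_{r,j},r)$ and then assigns the remaining points by least index. The paper phrases the disjointification via the auxiliary sets $E_{r,j}=Q(\xi_{r,j},2r)\setminus\bigcup_{i\ne j}Q(\xi_{r,i},r)$ followed by an inductive $Q_{r,j+1}=E_{r,j+1}\setminus\bigcup_{i\le j}Q_{r,i}$, which produces exactly the same partition as your index function~$j(\eta)$.
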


\begin{proof} By Lemma 4.6 in \cite{Zk2005}, for all $\xi\in\SS$ and $r\in (0,\sqrt{2})$, $\sigma(Q(\xi,r))\approx r^{2n}$. So, there exist at most $N_r$  points $\xi_{r,1},\xi_{r,2},\cdots,\xi_{r,N_r}$ such that $N_r\lesssim r^{-2n}$ and
$$Q(\xi_{r,i},r)\cap Q(\xi_{r,j},r)=\O,\,\,\mbox{ when }\,\,1\leq i<j\leq N_r.$$
 If $\SS\neq \cup_{k=1}^{N_r}Q(\xi_{r,k},2r)$, there exist $\eta\in\SS$ such that  $d(\xi_{r,i},\eta)\geq 2r$ for  $i=1,2,\cdots, N_r$.
 Then $Q(\eta,r)\cap Q(\xi_{r,i},r)=\O$ when $i=1,2,\cdots,N_r$. It is contradict with $N_r$ is the largest number.
 Therefore,
 $$\SS=\cup_{i=1}^{N_r}Q(\xi_{r,k},2r),\,\,\mbox{ and }\,\,N_r\gtrsim (2r)^{-2n}\approx r^{-2n}.$$
Then $N_r\approx r^{-2n}.$

For $j=1,2,\cdots,N_r$, let    $$E_{r,j}=Q(\xi_{r,j},2r)-\cup_{i\neq j}Q(\xi_{r,i},r).$$
Then
$$Q(\xi_{r,j},r)\subset E_{r,j}\subset Q(\xi_{r,j},2r).$$
For every $\eta\in\SS$, if $\eta\in Q(\xi_{r,j},r)$ for some $j$, we have $\eta\in E_{r,j}$; otherwise, we have $\eta\in E_{r,k}$ for some $k$ with  $\eta\in Q(\xi_{r,k},2r)$. That is to say, $\SS=\cup_{j=1}^{N_r} E_{r,j}$.

Let $Q_{r,1}=E_{r,1}$ and
$$Q_{r,j+1}=E_{r,j+1}-\cup_{i=1}^j Q_{r,i},\,\,j=1,2,\cdots,N_r-1.$$
Then
$$Q_{r,j}\subset Q(\xi_{r,j},2r),\,\,\,\SS=\cup_{j=1}^{N_r} Q_{r,j},\,\,\,\mbox{ and }\,\,\,Q_{r,i}\cap Q_{r,j}=\O\,\,\,\mbox{ if }\,\,\,i\neq j.$$
Obviously, we have $Q(\xi_{r,1},r)\subset Q_{r,1}$. Suppose $Q(\xi_{r,j},r)\subset Q_{r,j}$ for all $1\leq j\leq k$, where $1\leq k<N_r$ is fixed.
If $\eta\in Q(\xi_{r,k+1},r)$, we have $\eta\in E_{r,k+1}$ and $\eta\not\in E_{r,j}$ with $j\neq k+1$. Thus $\eta\in Q_{r,k+1}$. So, $Q(\xi_{r,k+1},r)\subset Q_{r,k+1}$. By mathematical induction, $Q(\xi_{r,j},r)\subset Q_{r,j}$ for all $1\leq j \leq N_r$.
 The proof is complete.
\end{proof}

When $k=1,2,\cdots$, let
$$N_k=N_{\frac{1}{\sqrt{2^k}}}, \,\,\xi_{k,j}=\xi_{\frac{1}{\sqrt{2^k}},j},\,\mbox{ and }\, Q_{k,j}=Q_{\frac{1}{\sqrt{2^k}},j}.$$
Define $c_{k,j}=(1-\frac{3}{2^{k+2}})\xi_{k,j}$ and
$$R_{k,j}=\left\{z\in\BB: 1-\frac{1}{2^k}\leq |z|<1-\frac{1}{2^{k+1}},  \frac{z}{|z|}\in Q_{k,j}\right\}.$$
For convenience, let $\xi_{0,1}=(1,0,\cdots,0)$, $c_{0,1}\in\frac{1}{4}\xi_{0,1}, Q_{0,1}=\SS$ and $R_{0,1}=\frac{1}{2}\BB$.
Let
$$\Upsilon=\{R_{k,j}:k=0,1,2,\cdots, j=1,2,\cdots,N_k\}.$$
 Then, $\BB=\cup_{k=0}^\infty \cup_{j=1}^{N_k} R_{k,j}$ and $N_k\approx 2^{nk}$.

\begin{Lemma}\label{0618-1}
The following statements hold.
\begin{enumerate}[(i)]
  \item Suppose $0<r<1$ is fixed, there exists $N=N(r)$,  such that,   for any $z\in\BB$, $\Delta(z,r)$ can be covered by a subsets of $\{R_{k,j}\}$ with no more than $N$ elements.
  \item For any given $0<s<r<1$,  there exists $M=M(s,r)$, such that, if $\{a_i\}$ is  $s$ pseudo-hyperbolic separated and $\BB=\cup\Delta(a_i,r)$, each $R_{k,j}$   can be covered by  a subset of $\{\Delta(a_j,r)\}$ with no more than $M$ elements.
\end{enumerate}
\end{Lemma}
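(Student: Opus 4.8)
The plan is to deduce both statements from a single geometric fact — that each plate $R_{k,j}$ is comparable, with absolute constants, to a pseudo-hyperbolic ball centred at $c_{k,j}$ — and then to apply the standard volume-counting for separated nets in the Bergman metric. \textbf{Step 1.} First I would prove that there are absolute constants $0<\rho_1\le\rho_0<1$ with
$$\Delta(c_{k,j},\rho_1)\subseteq R_{k,j}\subseteq \Delta(c_{k,j},\rho_0)\qquad\text{for every }R_{k,j}\in\Upsilon,$$
equivalently $D(c_{k,j},\beta_1)\subseteq R_{k,j}\subseteq D(c_{k,j},\beta_0)$ with $\beta_i=\frac12\log\frac{1+\rho_i}{1-\rho_i}$. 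For $k\ge1$ and $z,w\in R_{k,j}$ one has $1-|z|\approx1-|w|\approx2^{-k}\approx1-|c_{k,j}|$, and Lemma \ref{0529-1}(iv) gives $d(z/|z|,w/|w|)\le d(z/|z|,\xi_{k,j})+d(\xi_{k,j},w/|w|)\lesssim2^{-k/2}$; combining these via the elementary bound $|1-\langle z,w\rangle|\le(1-|z||w|)+|1-\langle z/|z|,w/|w|\rangle|$ gives $|1-\langle z,w\rangle|\approx2^{-k}$, so in the identity $\rho(z,w)^2=1-\frac{(1-|z|^2)(1-|w|^2)}{|1-\langle z,w\rangle|^2}$ the quotient is bounded below by an absolute constant, which yields the right-hand inclusion. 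For the left-hand inclusion the same closed form, read the other way, shows: if $\rho(c_{k,j},w)$ is below a small enough absolute $\rho_1$, then $1-|w|\in(2^{-k-1},2^{-k}]$ and $d(w/|w|,\xi_{k,j})\le2^{-k/2}$, so $w/|w|\in Q(\xi_{k,j},2^{-k/2})\subseteq Q_{k,j}$ by Lemma \ref{0529-1}(iv), hence $w\in R_{k,j}$. The level $k=0$, where $R_{0,1}=\frac12\BB$ is relatively compact in $\BB$, is trivial.

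\textbf{Part (i).} Since $\Upsilon$ partitions $\BB$, those $R_{k,j}$ that meet $\Delta(z,r)$ already cover $\Delta(z,r)$, so it is enough to bound their number. If $R_{k,j}\cap\Delta(z,r)$ contains a point $u$, then by Step 1 and the triangle inequality for $\beta$ one gets $\beta(c_{k,j},z)\le\beta(c_{k,j},u)+\beta(u,z)\le\beta_0+\frac12\log\frac{1+r}{1-r}=:C(r)$. The balls $D(c_{k,j},\beta_1)\subseteq R_{k,j}$ over these indices are pairwise disjoint and all contained in $D(z,C(r)+\beta_1)$; since the M\"obius-invariant mass $dV(\zeta)/(1-|\zeta|^2)^{n+1}$ of a Bergman ball depends only on its radius, comparing these masses bounds the number of admissible $(k,j)$ by a constant $N=N(r)$.

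\textbf{Part (ii).} Fix $R_{k,j}$ and call $i$ used if $\Delta(a_i,r)\cap R_{k,j}\ne\emptyset$. Since $\BB=\bigcup_i\Delta(a_i,r)$, the sets $\Delta(a_i,r)$ with $i$ used cover $R_{k,j}$, so it suffices to bound the number of used indices. If $i$ is used, take $w\in\Delta(a_i,r)\cap R_{k,j}$; by symmetry of $\rho$, $\beta(a_i,w)<\frac12\log\frac{1+r}{1-r}$, and with Step 1, $\beta(c_{k,j},a_i)\le\beta(c_{k,j},w)+\beta(w,a_i)\le\beta_0+\frac12\log\frac{1+r}{1-r}=:C(r)$. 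As $\{a_i\}$ is $s$-separated in $\rho$ it is $\frac12\log\frac{1+s}{1-s}$-separated in $\beta$, so the disjoint balls $D(a_i,\frac14\log\frac{1+s}{1-s})$ over used $i$ all lie in $D(c_{k,j},C(r)+\frac14\log\frac{1+s}{1-s})$; the same invariant-mass count bounds the number of used $i$ by $M=M(s,r)$.

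\textbf{Main obstacle.} Everything past Step 1 is the routine separated-net counting in $\BB$ already used for $\delta$-lattices in Section 3 (cf.\ Lemma 2.20 in \cite{Zk2005}). The real content is Step 1: showing each $R_{k,j}$ is comparable to a pseudo-hyperbolic ball. The delicate direction is the left-hand inclusion, where one must rule out a small pseudo-hyperbolic ball about $c_{k,j}$ spilling out of the dyadic annulus $\{1-2^{-k}\le|z|<1-2^{-k-1}\}$ or out of $Q_{k,j}$; this is exactly where the two-sided estimate $1-|z|^2\approx|1-\langle z,c_{k,j}\rangle|$ on $R_{k,j}$ and the sandwich $Q(\xi_{k,j},2^{-k/2})\subseteq Q_{k,j}\subseteq Q(\xi_{k,j},2\cdot2^{-k/2})$ from Lemma \ref{0529-1}(iv) are used.
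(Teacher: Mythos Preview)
Your proof is correct and takes a genuinely different, more conceptual route than the paper's. The paper argues both parts by direct computation: for (i) it uses the explicit ellipsoid description of $\Delta(z,r)$ to pin down the radial range of $|w|$ (the estimate $1-\tfrac{1+r}{1-r}2^{-k}<|w|<1-\tfrac{1-r}{1+r}2^{-k-1}$) and the angular range of $w/|w|$ in the nonisotropic metric, then counts how many of the $Q_{\delta,j}$ are needed to cover the projection; for (ii) it bounds the ``thickened'' plate $\bigcup_{\xi\in R_{k,j}}\Delta(\xi,2r)$ by an explicit set $E'$ and compares Lebesgue volumes. Your approach instead packages all the geometry into the single comparison $\Delta(c_{k,j},\rho_1)\subset R_{k,j}\subset\Delta(c_{k,j},\rho_0)$ and then runs the standard invariant-volume counting for separated nets, which handles both (i) and (ii) uniformly and with essentially no further computation. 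What the paper's approach buys is that it never needs the left inclusion $\Delta(c_{k,j},\rho_1)\subset R_{k,j}$; what your approach buys is modularity and brevity once Step~1 is in hand.

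One point in Step~1 deserves a bit more care than your sketch gives it. For the left inclusion, the identity $1-\rho^2=\dfrac{(1-|c_{k,j}|^2)(1-|w|^2)}{|1-\langle c_{k,j},w\rangle|^2}$ by itself only yields $|1-\langle c_{k,j},w\rangle|\approx 1-|c_{k,j}|$, and the naive triangle inequality $|1-\langle w/|w|,\xi_{k,j}\rangle|\le\dfrac{(1-|c_{k,j}||w|)+|1-\langle c_{k,j},w\rangle|}{|c_{k,j}||w|}$ produces a constant larger than~$1$ in front of $2^{-k}$, which is not enough to land $w/|w|$ in $Q(\xi_{k,j},2^{-k/2})$. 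One clean fix, still using only the closed form, is to set $a=|c_{k,j}|$, $b=|w|$, $\zeta=\langle w/|w|,\xi_{k,j}\rangle$, note that $\mathrm{Re}(1-ab\bar\zeta)\ge 1-ab$, and compute
\[
(ab)^2|1-\zeta|^2=|\,(1-ab\bar\zeta)-(1-ab)\,|^2\le |1-ab\bar\zeta|^2-(1-ab)^2=\frac{\rho^2(1-ab)^2-(a-b)^2}{1-\rho^2}\le\frac{\rho_1^2(1-ab)^2}{1-\rho_1^2},
\]
using $(1-a^2)(1-b^2)=(1-ab)^2-(a-b)^2$. This gives $|1-\zeta|\le\dfrac{\rho_1}{\sqrt{1-\rho_1^2}}\cdot\dfrac{1-ab}{ab}$, which is at most $2^{-k}$ for $\rho_1$ below an absolute threshold; alternatively, the explicit ellipsoid description (as in the paper's proof) gives the same conclusion. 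The radial containment $1-|w|\in[2^{-(k+1)},2^{-k})$ follows from the standard two-sided bound $\dfrac{1-\rho_1}{1+\rho_1}\le\dfrac{1-|w|}{1-|c_{k,j}|}\le\dfrac{1+\rho_1}{1-\rho_1}$ together with $1-|c_{k,j}|=\tfrac34\cdot 2^{-k}$. With this detail filled in, your argument is complete.
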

\begin{proof}
{\bf {\it (i)}. }For any $z\in\BB$, $\Delta(z,r)$ consists of  all $w\in \BB$ such that
$$\frac{|P_z(w)-c|^2}{r^2t^2}+\frac{|P_z^\perp(w)|^2}{r^2t}<1,$$
where
$$ c=\frac{(1-r^2)z}{1-r^2|z|^2}~~,\mbox{~~~~}~~t=\frac{1-|z|^2}{1-r^2|z|^2}.$$
As $|z|\to 1$, we have $|c|\to 1$ and $t\to 0$.

Without loss of generality,  assume $z=(|z|,0,0,\cdots,0)$ and $|z|\in [1-\frac{1}{2^k}, 1-\frac{1}{2^{k+1}})$ for some $k\in\NN$.
Then $w=(w_1,w_2,\cdots,w_n)\in \Delta(z,r)$ if and only if
$$\frac{|w_1 -|c||^2}{r^2t^2}+\frac{|w_2|^2+|w_3|^2+\cdots+|w_n|^n}{r^2t}<1.$$
Here,
$$ c=\frac{(1-r^2)z}{1-r^2|z|^2}~~,\mbox{~~~~}~~t=\frac{1-|z|^2}{1-r^2|z|^2}.$$
After a calculation, we have   $|c|-rt < |w|<|c|+rt$.
Let $x=\mbox{int}(\log_2\frac{1+r}{1-r})+1$.  Then
\begin{align}\label{0610-1}
1-\frac{1}{2^{k-x}}<1-\frac{1+r}{1-r}\frac{1}{2^k}<|w|<1-\frac{1-r}{1+r}\frac{1}{2^{k+1}}<1-\frac{1}{2^{k+x+1}}.
\end{align}
At the  same time, if $k$ is large enough such that $|w|>\frac{1}{2}$ always holds, then we have
\begin{align}\label{0826-1}
\left|1-\langle \frac{w}{|w|}, \frac{c}{|c|} \rangle\right|
&=\left|1-\frac{w_1}{|w|}\right|\leq \frac{|w_1-|c||+||c|-|w||}{|w|}<4t.
\end{align}

For any $\delta= k-x,k-x+1,\cdots,k+x$, there exists $Q_{\delta,j_1}, Q_{\delta,j_2},\cdots,Q_{\delta,j_{M_\delta}}$ such that
$$Q(\frac{c}{|c|},\sqrt{4t})\subset\cup_{i=1}^{M_\delta}Q_{\delta,j_i},\,\,\mbox{ and }\,\,Q(\frac{c}{|c|},\sqrt{4t})\cap Q_{\delta,j_i}\neq \O,\,\mbox{ for }\,i=1,2,\cdots, M_\delta.$$
Then we have
$$\cup_{i=1}^{M_\delta}Q_{\delta,j_i}\subset Q(\frac{c}{|c|},\sqrt{4t}+\frac{2}{\sqrt{2^\delta}}).$$
By Lemma \ref{0529-1}, \cite[Lemma 4.6]{Zk2005} and $t\approx 1-|z|\approx \frac{1}{2^k}$, we obtain
$$M_\delta \leq \frac{\sup\limits_{\eta\in\SS}\sigma(Q(\eta,\sqrt{4t}+\frac{2}{\sqrt{2^\delta}}))}
{\inf\limits_{\eta\in\SS}\sigma(Q(\eta,\frac{1}{\sqrt{2^\delta}}))}
\approx \left(\frac{\sqrt{4t}+\frac{2}{\sqrt{2^\delta}}}{\frac{1}{\sqrt{2^\delta}}}\right)^{2n}
\approx 1.$$
This and (\ref{0610-1}) deduce the desired, that is,  for any $z\in\BB$, $\Delta(z,r)$ can be covered by a subsets of $\{R_{k,j}\}$ with no more than $N(r)$ elements.

{\bf {\it (ii)}.} Suppose $\{a_i\}$ is  $s$ pseudo-hyperbolic separated and $\BB=\cup_{i=1}^\infty\Delta(a_i,r)$ for any fixed  $0<r<\frac{1}{2}$.
For any given $R_{k,j}$, without loss of generality, suppose there exists a constant $M_{k,j}\in\NN$ such that
$$R_{k,j}\cap\Delta(a_i,r)\neq\O,\,\mbox{ for all }\,i=1,2,\cdots, M_{k,j},$$
and
$$R_{k,j}\cap\Delta(a_i,r)=\O,\,\mbox{ for all }\,i=M_{k,j}+1,M_{k,j}+2,\cdots.$$
Then $R_{k,j}\subset \cup_{i=1}^{M_{k,j}}\Delta(a_i,r)$.
Let $E=\cup_{\xi\in R_{k,j}}\Delta(\xi,2r).$
Then $$\cup_{i=1}^{M_{k,j}}\Delta(a_i,\frac{s}{2})\subset \cup_{i=1}^{M_{k,j}}\Delta(a_i,r)\subset E.$$
For any $z\in E$, there exists $\xi\in R_{k,j}$ such that $z\in\Delta(\xi,2r)$. By (\ref{0610-1}) and $1-\frac{1}{2^k}\leq |\xi|<1-\frac{1}{2^{k+1}}$, we have
\begin{align*}
1-\frac{1+2r}{1-2r}\frac{1}{2^k}<|z|<1-\frac{1-2r}{1+2r}\frac{1}{2^{k+1}}.
\end{align*}
By (\ref{0826-1}), when $k$ is large enough,  we have
\begin{align*}
\left|1-\langle \frac{z}{|z|}, \frac{\xi}{|\xi|} \rangle\right|
<\frac{4(1-|\xi|^2)}{1-4r^2|\xi|^2}<\frac{8}{1-4r^2}\frac{1}{2^k}.
\end{align*}
Using the notations  defined before Lemma \ref{0618-1}, there exists a constant $0<C(r)<\infty$, such that
\begin{align*}
\left|1-\langle \frac{z}{|z|}, \frac{\xi_{k,j}}{|\xi_{k,j}|} \rangle\right|
&=\left(d(\frac{z}{|z|}, \frac{\xi_{k,j}}{|\xi_{k,j}|})\right)^2
\leq 2\left(d(\frac{z}{|z|}, \frac{\xi}{|\xi|})\right)^2+2\left(d(\frac{\xi}{|\xi|}, \frac{\xi_{k,j}}{|\xi_{k,j}|})\right)^2  \\
&=2 \left|1-\langle \frac{z}{|z|}, \frac{\xi}{|\xi|} \rangle\right|
+ 2\left|1-\langle \frac{\xi}{|\xi|},\frac{\xi_{k,j}}{|\xi_{k,j}|},  \rangle\right|
< \frac{C(r)}{2^k}.
\end{align*}
Let
$$
E^\p=\left\{z\in\BB:1-\frac{1+2r}{1-2r}\frac{1}{2^k}<|z|<1-\frac{1-2r}{1+2r}\frac{1}{2^{k+1}}, \,\,
\left|1-\langle \frac{z}{|z|}, \frac{\xi_{k,j}}{|\xi_{k,j}|} \rangle\right|< \frac{C(r)}{2^k}\right\}.
$$
Then,
$$\cup_{i=1}^{M_{k,j}}\Delta(a_i,\frac{s}{2})\subset E^\p,\,\,\,\, V(E^\p)\approx \frac{1}{2^{(n+1)k}}.$$
Meanwhile, for $i=1,2,\cdots, M_{k,j}$, since $R_{k,j}\cap \Delta(a_i,r)\neq\O$, we have $1-|a_i|\approx \frac{1}{2^k}$.
So, $V(\Delta(a_i,\frac{s}{2}))\approx \frac{1}{2^{(n+1)k}}.$
Therefore,
\begin{align*}
M_{k,j}\leq \frac{V(E^\p)}{\inf\limits_{1\leq i\leq M_{k,j}}V(\Delta(a_i,\frac{s}{2}))}\lesssim 1.
\end{align*}

When $\frac{1}{2}\leq r<1$, we can translate the pseudo hyperbolic balls to Bergman metric balls, and proved the statement in the same way. The details will be omitted.
 The proof is complete.
\end{proof}

Suppose $\om\in\hD$. For $z\in \BB\backslash\{0\}$  and $\alpha<2$, let
$$\om^{n*}(z)=\int_{|z|}^1 r^{2n-1}\log\frac{r}{|z|}\om(r)dr$$
and
$$\Waw(z)=\frac{(1-|z|)^{-\alpha}\om^{n*}(z)}{|z|^{2n}}.$$
By Lemma \ref{1210-3}, when $\alpha<2$ and $t>\frac{1}{2}$, we have
\begin{align}\label{0605-1}
\int_\BB |z|^2\Waw(z)dV(z)<\infty\,\,\mbox{ and }\,\, \frac{\widehat{\Waw}(t)}{(1-t)\Waw(t)}\approx 1.
\end{align}

So, when $\alpha<2$, we  define a function space $H(\Waw)$ consisting of all $f\in H(\BB)$ such that
$$\|f\|_{H(\Waw)}^2=|f(0)|^2 \om(\BB)+4\int_\BB |\Re f(z)|^2 \Waw(z)dV(z) <\infty.$$
Obviously, $H(\Waw)$ is a Banach space and polynomials are dense in it.
For all $f,g\in H(\Waw)$, the inner product induced by $\|\cdot\|_{H(\Waw)}$ is
$$\langle f,g \rangle_{H(\Waw)}=f(0)\ol{g(0)}\om(\BB)+4\int_\BB \Re f(z)\ol{\Re g(z)} \Waw(z)dV(z).$$

By (\ref{0605-1}), even if $\int_0^1 \Waw(t)dt$ is divergent, we can find a  $\Psi\in\R$ such that
$$\|f\|_{H(\Waw)}^2\approx |f(0)|^2 +4\int_\BB |\Re f(z)|^2 \Psi(z)dV(z) <\infty.$$
For example,
$$
\Psi(t)=\left\{
\begin{array}{cc}
  \Waw(\frac{1}{2}), & t\in [0,\frac{1}{2}],  \\
  \Waw(t), & t\in[\frac{1}{2},1).
\end{array}
\right.
$$
So, we always  assume that $\Waw$ is a regular weight.

Theorem 2 in \cite{DjLsLxSy2019arxiv} shows that, if $\om$ is a radial weight,
\begin{align}\label{0605-2}
\|f-f(0)\|_{A_\om^2}^2=4\int_\BB \frac{|\Re f(z)|^2}{|z|^{2n}}\om^{n*}(z)dV(z)\approx \int_\BB |\Re f(z)|^2\om^*(z)dV(z).
\end{align}

\begin{Lemma}\label{0607-8}
Suppose $\om\in\hD$. Then
\begin{align}\label{0605-3}
A_\om^2=H(W_{0}^{\om}),\,\,\mbox{ and } A_{\om^*_{-\alpha-2}}^2\simeq  H(\Waw)\,\,\mbox{ when }\,\,\alpha<0.
\end{align}
Here, $\om^*_{-\alpha-2}(t)=(1-t)^{-\alpha-2}\om^*(t)$.
\end{Lemma}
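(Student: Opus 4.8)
The statement has two parts. The first, $A_\om^2 = H(W_0^\om)$, is essentially a restatement of the identity \eqref{0605-2}: by definition $\|f\|_{H(W_0^\om)}^2 = |f(0)|^2\om(\BB) + 4\int_\BB |\Re f(z)|^2 \frac{\om^{n*}(z)}{|z|^{2n}}dV(z)$, and \eqref{0605-2} says exactly that $4\int_\BB \frac{|\Re f(z)|^2}{|z|^{2n}}\om^{n*}(z)dV(z) = \|f-f(0)\|_{A_\om^2}^2$. Combining with $\|f(0)\|_{A_\om^2}^2 = |f(0)|^2\om(\BB)$ and the orthogonality of the constant term to $f-f(0)$ in $A_\om^2$ (the constant function is orthogonal to every monomial $z^m$ with $|m|\ge 1$ with respect to the radial weight $\om$), we get $\|f\|_{A_\om^2}^2 = |f(0)|^2\om(\BB) + \|f-f(0)\|_{A_\om^2}^2 = \|f\|_{H(W_0^\om)}^2$, so the two spaces coincide with equal norms. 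I would write this out in two or three lines.

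**The second part.** For $\alpha<0$ I want $A_{\om^*_{-\alpha-2}}^2 \simeq H(\Waw)$ with equivalent norms, where $\om^*_{-\alpha-2}(t) = (1-t)^{-\alpha-2}\om^*(t)$. The strategy is to apply the first part's machinery — or rather \eqref{0605-2} again, but now with the weight $\upsilon := \om^*_{-\alpha-2}$ in place of $\om$. First I need to check $\upsilon$ is a legitimate radial weight and in fact $\upsilon\in\R$ (hence in $\hD$, so that everything applies): by Lemma \ref{1210-3}(i), $(1-t)^\beta \om^*(t)\in\R$ for any $\beta>-2$, and here $\beta = -\alpha-2 > -2$ precisely because $\alpha<0$. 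So $\upsilon\in\R\subset\hD$. Now \eqref{0605-2} applied to $\upsilon$ gives $\|f - f(0)\|_{A_\upsilon^2}^2 \approx \int_\BB |\Re f(z)|^2 \upsilon^*(z)\,dV(z)$. The key computation is then to identify $\upsilon^*$ — the associated weight of $\om^*_{-\alpha-2}$ — with $\Waw$ up to constants. Since $\upsilon\in\R$, by Lemma \ref{0507-1}(i)(b) we have $\upsilon^*(z)\approx (1-|z|)\widehat{\upsilon}(z) \approx (1-|z|)\cdot(1-|z|)\upsilon(z) = (1-|z|)^2 (1-|z|)^{-\alpha-2}\om^*(z) = (1-|z|)^{-\alpha}\om^*(z)$, using regularity of $\upsilon$ in the form $\widehat{\upsilon}(t)\approx (1-t)\upsilon(t)$. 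On the other hand $\Waw(z) = (1-|z|)^{-\alpha}\om^{n*}(z)/|z|^{2n} \approx (1-|z|)^{-\alpha}\om^*(z)$ by the second equivalence in \eqref{0605-2} (which compares $\om^{n*}(z)/|z|^{2n}$ with $\om^*(z)$, at least away from the origin, where both sides are comparable to a positive constant anyway). Hence $\upsilon^*\approx \Waw$, and therefore $\|f-f(0)\|_{A_\upsilon^2}^2 \approx \int_\BB |\Re f(z)|^2 \Waw(z)\,dV(z)$.

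**Finishing up.** Adding back the constant term: $\|f\|_{A_\upsilon^2}^2 = |f(0)|^2\upsilon(\BB) + \|f-f(0)\|_{A_\upsilon^2}^2 \approx |f(0)|^2 + \int_\BB |\Re f(z)|^2\Waw(z)\,dV(z)$, where I used that $\upsilon(\BB)$ is a fixed positive finite constant (finite because $\upsilon\in\hD$ is integrable; positive trivially). On the other hand $\|f\|_{H(\Waw)}^2 = |f(0)|^2\om(\BB) + 4\int_\BB|\Re f(z)|^2\Waw(z)\,dV(z) \approx |f(0)|^2 + \int_\BB|\Re f(z)|^2\Waw(z)\,dV(z)$ as well, since $\om(\BB)$ is likewise a fixed positive finite constant. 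Comparing, $\|f\|_{A_\upsilon^2}^2 \approx \|f\|_{H(\Waw)}^2$, which is the asserted isomorphism $A_{\om^*_{-\alpha-2}}^2\simeq H(\Waw)$.

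**Main obstacle.** The only delicate point is the chain of equivalences $\upsilon^*(z)\approx(1-|z|)^2\upsilon(z)\approx(1-|z|)^{-\alpha}\om^*(z)\approx\Waw(z)$ near the boundary, and making sure the behavior near the origin (where $\om^{n*}(z)/|z|^{2n}$, $\om^*(z)$, $\widehat\upsilon$ are all bounded above and below by positive constants, and $|z|^{2n}$ does not vanish in any relevant region) does not spoil the global comparison; this is exactly where I lean on Lemma \ref{1210-3}(i) to guarantee $\om^*_{-\alpha-2}\in\R$ so that the clean regular-weight identities of Lemma \ref{0507-1} and \eqref{0605-2} are available. Everything else is bookkeeping with the definitions of $H(\Waw)$, the inner products, and the orthogonality of constants.
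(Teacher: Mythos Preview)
Your overall strategy matches the paper's: reduce the first equality to \eqref{0605-2}, and for the second apply \eqref{0605-2} to the auxiliary weight $\upsilon=\om^*_{-\alpha-2}$ after checking $\upsilon\in\R$ via Lemma~\ref{1210-3}(i). Where your argument breaks down is the near-origin behavior. Your assertion that $\om^{n*}(z)/|z|^{2n}$ and $\om^*(z)$ are ``bounded above and below by positive constants'' near the origin is false: one computes $\om^*(t)\sim c\log(1/t)$ and $\om^{n*}(t)/t^{2n}\sim c'\log(1/t)/t^{2n}$ as $t\to 0^+$. Consequently the pointwise equivalence $\upsilon^*(z)\approx\Waw(z)$ that you need fails near $0$; in fact $\Waw(z)/\upsilon^*(z)\sim C/|z|^{2n}\to\infty$. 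Only the inequality $\upsilon^*\lesssim\Waw$ holds globally, which gives one half of the norm comparison but not the other.

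The paper handles this by splitting the radial integral at $|z|=\tfrac12$. On $[\tfrac12,1)$ the pointwise estimate \eqref{0605-4} is available. For $|z|\le\tfrac12$ the paper uses two devices: monotonicity of $M_2(r,\Re f)$ together with the observation $\int_0^{1/2}r^{2n-1}(\om^*_{-\alpha-2})^*(r)\,dr\lesssim\int_{1/2}^1 r^{2n-1}(\om^*_{-\alpha-2})^*(r)\,dr$ for one direction, and Cauchy's formula (giving $|\Re f(z)|\lesssim|z|\,\|f\|_{A_\upsilon^2}$ on $|z|\le\tfrac12$, so that \eqref{0605-1} controls the contribution of $\Waw$) for the other. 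You would need to insert an argument of this kind.

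There is a shortcut you nearly took: use the \emph{exact} equality in \eqref{0605-2} rather than the approximate one. That is, $\|f-f(0)\|_{A_\upsilon^2}^2=4\int_\BB|\Re f(z)|^2\,\upsilon^{n*}(z)\,|z|^{-2n}\,dV(z)$, and then compare $\upsilon^{n*}(z)$ with $(1-|z|)^{-\alpha}\om^{n*}(z)$ directly (the factor $|z|^{-2n}$ is already built into $\Waw$). This pointwise comparison \emph{does} hold on all of $(0,1)$: near $1$ by the same regularity computation you gave, and near $0$ because both quantities behave like a constant multiple of $\log(1/t)$, so their ratio stays bounded. With that observation the proof closes without any separate treatment of the origin.
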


\begin{proof} It is obvious that $A_\om^2=H(W_{0}^{\om})$.
For any $\om\in\hD$ and $\beta\in \RR$, let
$$\om_\beta(t)=(1-t)^\beta\om(t),\,\,\om^*_\beta(t)=(1-t)^\beta\om^*(t).$$

Assume $\alpha<0$. By Lemma \ref{1210-3}, we have $\om^*_{-\alpha-2}\in\R$. If $t>\frac{1}{2}$, using Lemma \ref{1210-3}, we obtain
\begin{align}\label{0605-4}
(\om^*_{-\alpha-2})^{n*}(t)
\approx  (\om^*_{-\alpha-2})^{*}(t)
\approx \om^*_{-\alpha}(t)
\approx \om^{n*}_{-\alpha}(t).
\end{align}
Since
$$\int_0^\frac{1}{2}r^{2n-1}(\om_{-\alpha-2}^*)^*(r)dr<\infty, \,\mbox{ and }\, \int_\frac{1}{2}^1 r^{2n-1}(\om_{-\alpha-2}^*)^*(r)dr>0,$$
we have
$$\int_0^\frac{1}{2}r^{2n-1}(\om_{-\alpha-2}^*)^*(r)dr<C(\om) \int_\frac{1}{2}^1 r^{2n-1}(\om_{-\alpha-2}^*)^*(r)dr.$$
By (\ref{0605-2}), the monotonicity of $M_2(r,\Re f)$ and (\ref{0605-4}), we have
\begin{align*}
\|f-f(0)\|_{A_{\om_{-\alpha-2}^*}}^2
\approx & \int_\BB |\Re f(z)|^2(\om_{-\alpha-2}^*)^*(z)dV(z)  \\
=&2n\left(\int_0^\frac{1}{2}+\int_\frac{1}{2}^1\right)r^{2n-1}(\om_{-\alpha-2}^*)^*(r)M_2^2(r,\Re f)dr\\
\lesssim & 2n \int_\frac{1}{2}^1 r^{2n-1}(\om_{-\alpha-2}^*)^*(r)M_2^2(r,\Re f)dr\\
\approx & 2n \int_\frac{1}{2}^1 r^{2n-1}\frac{\om_{-\alpha}^{n*}(r)}{r^{2n}}M_2^2(r,\Re f)dr\\
\leq &\int_\BB |\Re f(z)|^2 \Waw(z)dV(z).
\end{align*}
Therefore,
$$\|f\|_{A_{\om_{-\alpha-2}^*}}^2\lesssim |f(0)|^2\om(\BB)+ 4\int_\BB |\Re f(z)|^2 \Waw(z)dV(z).$$

For any $f\in H(\BB)$, let $f_r(z)=f(rz)$ for $r\in(0,1)$. If $|z|\leq\frac{1}{2}$, by Cauchy's formula, we have
\begin{align*}
f(z)=\int_\SS \frac{f_\frac{3}{4}(\eta)}{(1-\langle \frac{4}{3}z,\eta\rangle)^n}d\sigma(\eta).
\end{align*}
So, when $|z|\leq \frac{1}{2}$, since $M_1(r,f)\leq M_2(r,f)$, we have
\begin{align*}
|\Re f(z)|
&= n\left|\int_\SS \frac{f_\frac{3}{4}(\eta)}{(1-\langle \frac{4}{3}z,\eta\rangle)^{n+1}}\langle \frac{4}{3}z,\eta\rangle d\sigma(\eta)\right|
\lesssim |z| M_1(\frac{3}{4},f)\\
&\lesssim |z|\frac{2n\int_\frac{3}{4}^1 M_1(r,f)r^{2n-1}\om_{-\alpha-2}^*(r)dr}{2n\int_\frac{3}{4}^1 r^{2n-1}\om_{-\alpha-2}^*(r)dr}
\lesssim |z|\|f\|_{A_{\om_{-\alpha-2}^*}^2}.
\end{align*}
Thus, by (\ref{0605-2}) and (\ref{0605-4}), we have
\begin{align*}
\int_\BB |\Re f(z)|^2 \Waw(z)dV(z)
=&2n\left(\int_0^\frac{1}{2}+\int_\frac{1}{2}^1\right)r^{2n-1}\Waw(r)M_2^2(r,\Re f)dr\\
\lesssim &\|f\|_{A_{\om_{-\alpha-2}^*}^2}^2+2n\int_\frac{1}{2}^1 r^{2n-1}(\om_{-\alpha-2}^*)^{*}(r)M_2^2(r,\Re f)dr \\
\leq & \|f\|_{A_{\om_{-\alpha-2}^*}^2}^2+\|f-f(0)\|_{A_{\om_{-\alpha-2}^*}^2}^2.
\end{align*}
Therefore,
$$\|f\|_{H(\Waw)}\lesssim \|f\|_{A_{\om_{-\alpha-2}^*}^2}.$$
The proof is complete.
\end{proof}

\begin{Lemma}\label{0627-1}
Suppose $\om$ is continuous and regular.  For any $a,z\in\BB$, let
$$v_a(z)=\int_0^1 \left(B_{a}^\om(tz)-B_{a}^\om(0)\right)\frac{dt}{t}.$$
Then there exists $\delta=\delta(\om)\in(0,1)$ such that
\begin{align}
|v_a(z)|\gtrsim \frac{1}{(1-|a|)^{n-1}\hat{\om}(a)}, \label{0901-1}
\end{align}
for all $|a|\geq \frac{1}{2}$ and $|z-a|<\delta(1-|a|)$.

Therefore, there exists $r=r(\om)\in(0,1)$ such that (\ref{0901-1}) holds for all $|a|\geq \frac{1}{2}$ and $z\in\Delta(a,r)$.
\end{Lemma}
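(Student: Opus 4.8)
The plan is to exploit the fact that $v_a$ depends on $z$ only through $\langle z,a\rangle$, and then to run a perturbation argument of the kind already used in Lemmas~\ref{0413-1} and~\ref{0422-1}. Writing $B_a^\om(w)=\frac{1}{2n!}\sum_{k\ge0}\frac{(n-1+k)!}{k!\,\om_{2n+2k-1}}\langle w,a\rangle^k=:\psi(\langle w,a\rangle)$ and integrating term by term, one gets $v_a(z)=\varphi(\langle z,a\rangle)$ with $\varphi(\zeta)=\frac{1}{2n!}\sum_{k\ge1}\frac{(n-1+k)!}{k\cdot k!\,\om_{2n+2k-1}}\zeta^k$, so that $\varphi'(\zeta)=\sum_{k\ge1}c_k\zeta^{k-1}$, where $c_k=\frac{1}{2n!}\frac{(n-1+k)!}{k!\,\om_{2n+2k-1}}\ge0$ are precisely the Taylor coefficients of $\psi$. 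The key observation is that, although the ellipsoid $\Delta(a,r)$ is ``fat'' in the directions orthogonal to $a$, the quantity controlling the perturbation, $\langle z,a\rangle-|a|^2=\langle z-a,a\rangle$, is one-dimensional: from the ellipsoid description of $\Delta(a,r)$ (exactly as in the proof of Lemma~\ref{0422-1}) one gets $|\langle z-a,a\rangle|\le\eta(1-|a|)$ for $z\in\Delta(a,r)$ with $\eta\to0$ as $r\to0$, and the same bound holds trivially, with $\eta\to0$ as $\delta\to0$, when $|z-a|<\delta(1-|a|)$. Thus it suffices to prove the estimate for all $z$ with $|\langle z-a,a\rangle|\le\eta(1-|a|)$, $\eta$ small, which covers both assertions at once.

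First I would prove the lower bound at $z=a$, where $v_a(a)=\varphi(|a|^2)=\sum_{k\ge1}\frac{c_k}{k}|a|^{2k}$ is a sum of positive terms. By Stirling $\frac{(n-1+k)!}{k!}\approx k^{n-1}$, and by Lemma~\ref{0507-1}(i)(c) together with the doubling property $\om_{2n+2k-1}\approx\hat{\om}(1-1/k)$, so the $k$-th coefficient satisfies $c_k/k\approx k^{n-2}/\hat{\om}(1-1/k)$. Restricting the sum to the range $\frac{1}{2(1-|a|)}\le k\le\frac{1}{1-|a|}$ (which has length $\ge1$, hence contains an integer, whenever $|a|\ge\frac12$), one has $|a|^{2k}\gtrsim1$, $\hat{\om}(1-1/k)\approx\hat{\om}(a)$ by Lemma~\ref{1210-3}(iii), and $k^{n-2}\approx(1-|a|)^{2-n}$; summing the $\approx(1-|a|)^{-1}$ surviving terms gives $v_a(a)\gtrsim\frac{1}{(1-|a|)^{n-1}\hat{\om}(a)}$.

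Next comes the perturbation step. For $z$ with $|\langle z-a,a\rangle|\le\eta(1-|a|)$, write $v_a(z)-v_a(a)=\int_I\varphi'(\zeta)\,d\zeta$ along the segment $I=[|a|^2,\langle z,a\rangle]\subset\D$, of length $|I|\le\eta(1-|a|)$; since $|\zeta|^2$ is convex, $\rho_0:=\sup_{\zeta\in I}|\zeta|$ is attained at an endpoint, and for $|a|\ge\frac12$ and $\eta$ small one checks $1-\rho_0\approx1-|a|$ and $\rho_0\ge|a|^2-|I|\ge\frac18$. Because $\psi$ has nonnegative coefficients, $|\varphi'(\zeta)|\le\sum_{k\ge1}c_k\rho_0^{k-1}\le\rho_0^{-1}\psi(\rho_0)$ on $I$, and $\psi(\rho_0)=B_b^\om(b)$ for $b=\sqrt{\rho_0}\,\tfrac{a}{|a|}$ (since $B_b^\om(b)=\psi(|b|^2)$); hence, using (\ref{0412-1}) with $p=2$, Lemma~\ref{1210-3}(ii)--(iii) and $1-|b|\approx1-|a|$,
$$|v_a(z)-v_a(a)|\le 8\,|I|\,B_b^\om(b)=8\,|I|\,\|B_b^\om\|_{A_\om^2}^2\approx\frac{|I|}{\om(S_b)}\approx\frac{|I|}{(1-|a|)^n\hat{\om}(a)}\lesssim\frac{\eta}{(1-|a|)^{n-1}\hat{\om}(a)}.$$
Choosing $\delta$ (respectively $r$) so small that the implied constant times $\eta$ is at most half the constant from the first step, the triangle inequality yields $|v_a(z)|\ge\frac12 v_a(a)\gtrsim\frac{1}{(1-|a|)^{n-1}\hat{\om}(a)}$, which is (\ref{0901-1}); the last assertion of the lemma is then the special case $z\in\Delta(a,r)$.

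The main obstacle is the first step: one must read off the correct exponent $(1-|a|)^{-(n-1)}$ from the $\frac1k$-damped series defining $\varphi$, which amounts to matching the coefficient asymptotics $c_k/k\approx k^{n-2}/\hat{\om}(1-1/k)$ against the window of indices $k\approx(1-|a|)^{-1}$, where $|a|^{2k}$ is still bounded below and $\hat{\om}(1-1/k)\approx\hat{\om}(a)$. Once this window estimate is in place, the perturbation bound is a routine variant of those already carried out in Lemmas~\ref{0413-1} and~\ref{0422-1}, the only new point being the reduction to the one-dimensional quantity $\langle z-a,a\rangle$, which is what lets the conclusion pass from Euclidean balls to the tangentially fat pseudohyperbolic balls $\Delta(a,r)$.
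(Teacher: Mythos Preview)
Your proof is correct and follows essentially the same route as the paper's: write $v_a(z)=\varphi(\langle z,a\rangle)$ via the series, estimate $v_a(a)$ from below by Stirling and the doubling asymptotics $\om_{2n+2k-1}\approx\hat{\om}(1-1/k)$ on a window $k\approx(1-|a|)^{-1}$, and then bound $|v_a(z)-v_a(a)|$ by integrating $\varphi'$ over the segment $[|a|^2,\langle z,a\rangle]$ and invoking the kernel-norm estimate. Your treatment is slightly more explicit than the paper's on the final passage to pseudohyperbolic balls (the reduction to the one-dimensional quantity $\langle z-a,a\rangle$), which the paper leaves as a ``therefore''.
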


\begin{proof}
By Lemma \ref{0313-1}, we have
$$v_a(z)=\frac{1}{2n!}\sum_{k=1}^\infty \frac{(n-1+k)!}{k\, k!\om_{2n+2k-1}}\langle z,a\rangle^k.$$
Let $|a|>\frac{1}{2}$ and fix the integer $N=N(a)$ such that $1-\frac{1}{N}<|a|\leq 1-\frac{1}{N+1}$.
Using Stirling's estimate, Lemmas 1 and 2, we have
\begin{align*}
v_a(a)
&\approx\sum_{k=1}^\infty \frac{k^{n-2}}{\hat{\om}(1-\frac{1}{k})}|a|^{2k}
\gtrsim \frac{N^{n-2}}{\hat{\om}(1-\frac{1}{N})}\sum_{k=N}^\infty |a|^{2k}  \\
&= \frac{1}{(1-(1-\frac{1}{N}))^{n-2}\hat{\om}(1-\frac{1}{N})}\frac{|a|^{2N}}{1-|a|^2}
\approx\frac{1}{(1-|a|)^{n-1}\hat{\om}(a)}.
\end{align*}

If $|z-a|<\delta(1-|a|)$ and $|a|>\frac{1}{2}$, by Lemma \ref{0313-1}, more specifically, the proof of it in \cite{DjLsLxSy2019arxiv2}, we have
\begin{align*}
|v_a(z)-v_a(a)|
&=\frac{1}{2n!}\left|\sum_{k=1}^\infty \frac{(n-1+k)!}{k\, k!\om_{2n+2k-1}}\left(\langle z,a\rangle^k - \langle a,a\rangle^k\right)\right|\\
&\leq \frac{1}{2n!}\sum_{k=1}^\infty \frac{(n-1+k)!}{ k!\om_{2n+2k-1}}\left|\int_{\langle a,a\rangle}^{\langle z,a\rangle}\eta^{k-1}d\eta \right| \\
&\leq |z-a|\frac{1}{2n!}\sum_{k=1}^\infty \frac{(n-1+k)!}{ k!\om_{2n+2k-1}}|a|^{k-1}
\lesssim  \frac{\delta}{(1-|a|)^{n-1}\hat{\om}(a)}.
\end{align*}
So, when $\delta$ is small enough, we have
\begin{align*}
|v_a(z)|\geq |v_a(a)|-|v_a(a)-v_a(z)|\gtrsim \frac{1}{(1-|a|)^{n-1}\hat{\om}(a)}.
\end{align*}
The proof is complete.
\end{proof}

By Cauchy's formula, each point evaluation $L_z(f)=f(z)$ is a linear bounded functional on $H(\Waw)$ for $\alpha<2$.
So, there exists reproducing kernel $K_z^{\Waw}$ with $\|L_z\|=\|K_z^{\Waw}\|_{H(\Waw)}$ such that
$$f(z)=\langle f,K_z^{\Waw}\rangle_{H(\Waw)},\,\,\mbox{ for all }\,\,f\in H(\Waw).$$
Then, if $\mu$ is a positive Borel measure, the Toeplitz operator $\HT_\mu:H(\Waw)\to H(\Waw)$ is defined by
$$\HT_\mu f(z)=\int_\BB f(w)\ol{K_z^{\Waw}(w)}d\mu(w).$$

\begin{Lemma}\label{0607-1}
Suppose $\alpha<2$ and $\om\in\hD$. Then
$$K_z^\Waw(w)=\frac{1}{\om(\BB)}+\frac{1}{8n!}\sum_{k=1}^\infty \frac{(n-1+k)!}{k^2 k!(\Waw)_{2n+2k-1}}\langle w,z\rangle^k.$$
If $\alpha<1$, then
\begin{align}\label{0606-5}
\|K_z^\Waw\|_{H(\Waw)}^2\approx \frac{(1-|z|)^{\alpha-n+1}}{\om^*(z)},\,\,\mbox{ when }\,\,|z|>\frac{1}{2}.
\end{align}
Here, $(\Waw)_s=\int_0^1 t^s\Waw(t)dt$ for all $s\geq 2n$.
\end{Lemma}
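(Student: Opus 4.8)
## Proof Proposal

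The plan is to compute $K_z^{\Waw}$ explicitly from the definition of the inner product on $H(\Waw)$ and then extract the size estimate $(\ref{0606-5})$ from the resulting power series. First I would recall that, by definition,
$$\langle f,g \rangle_{H(\Waw)}=f(0)\ol{g(0)}\om(\BB)+4\int_\BB \Re f(z)\ol{\Re g(z)} \Waw(z)dV(z),$$
so the monomials $\{w^m\}$ form an orthogonal (not orthonormal) basis. Using $\Re (w^m)=|m|w^m$ together with the polar-coordinate integration formula on $\BB$ (Lemma 1.8 in \cite{Zk2005}), the monomial $w^m$ with $|m|=k\ge 1$ has squared norm $4k^2\cdot\frac{(n-1)!\,m!}{(n-1+k)!}(\Waw)_{2n+2k-1}$ (up to the normalization constants already fixed in the paper's definition of $(\Waw)_s$ and in the kernel $B_z^\om$), while the constant $1$ has squared norm $\om(\BB)$. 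Summing $\sum_m \frac{\ol{z}^m w^m}{\|w^m\|_{H(\Waw)}^2}$ over all multi-indices and collapsing the inner sum over $|m|=k$ via the multinomial identity $\sum_{|m|=k}\frac{k!}{m!}\ol{z}^m w^m=\langle w,z\rangle^k$ then yields exactly
$$K_z^\Waw(w)=\frac{1}{\om(\BB)}+\frac{1}{8n!}\sum_{k=1}^\infty \frac{(n-1+k)!}{k^2 k!(\Waw)_{2n+2k-1}}\langle w,z\rangle^k,$$
matching the stated form of $B_z^\om$ but with the extra $k^2$ in the denominator (coming from the two factors of $|m|$ in $\Re$ and from squaring), and the factor $8$ from the $4$ in the inner product. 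This reproducing-kernel computation is essentially the one already carried out for $B_z^\om$ in \cite{DjLsLxSy2019arxiv2}, so it should go through smoothly.

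For the norm estimate $(\ref{0606-5})$, I would use $\|K_z^\Waw\|_{H(\Waw)}^2=K_z^\Waw(z)$ and estimate the resulting series $\frac{1}{\om(\BB)}+\frac{1}{8n!}\sum_{k\ge 1}\frac{(n-1+k)!}{k^2 k!(\Waw)_{2n+2k-1}}|z|^{2k}$. Since $\alpha<1$, we have $\Waw$ regular (as arranged right before the lemma) with $\hat{\Waw}=\hat{\Waw}$, and $(\ref{0605-1})$ gives $\frac{\widehat{\Waw}(t)}{(1-t)\Waw(t)}\approx 1$ for $t>\frac12$; combined with Lemma \ref{0507-1}(i)(c) this yields $(\Waw)_{2n+2k-1}\approx \widehat{\Waw}(1-\tfrac1k)$ for $k\ge 1$. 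By Stirling, $\frac{(n-1+k)!}{k!}\approx k^{n-1}$, so the series behaves like $\sum_{k\ge 1}\frac{k^{n-3}}{\widehat{\Waw}(1-1/k)}|z|^{2k}$. Now I would compare $\widehat{\Waw}$ with $\om^*$: by Lemma \ref{1210-3}(i), $(1-t)^{-\alpha}\om^*(t)\in\R$ is comparable (as a tail integral) to $\Waw$ up to the factor $|z|^{2n}$ and a shift, and one checks $\widehat{\Waw}(1-\tfrac1k)\approx \frac{\om^*(1-1/k)}{k^{1-\alpha}}\cdot k^{-1}$, or more directly $\widehat{\Waw}(t)\approx (1-t)^{1-\alpha}\frac{\om^*(t)}{1-t}=(1-t)^{-\alpha}\om^*(t)\cdot(1-t)$... the precise bookkeeping of exponents is the routine part. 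The standard Bergman-kernel tail estimate (exactly the mechanism used in Lemma \ref{0313-2} and in the $v_a$ estimate of Lemma \ref{0627-1}: $\sum_{k\ge N}|z|^{2k}\approx N(1-|z|)^{-1}|z|^{2N}$ and concentration of the series near $k\approx N:=\frac{1}{1-|z|}$) then gives
$$\|K_z^\Waw\|_{H(\Waw)}^2\approx \frac{N^{n-3}}{\widehat{\Waw}(1-1/N)}\cdot\frac{1}{1-|z|}\approx \frac{(1-|z|)^{3-n}}{(1-|z|)\,\widehat{\Waw}(z)}\approx \frac{(1-|z|)^{\alpha-n+1}}{\om^*(z)},$$
for $|z|>\frac12$, using $\widehat{\Waw}(z)\approx (1-|z|)^{1-\alpha}\frac{\om^*(z)}{|z|^{2n}}\approx (1-|z|)^{1-\alpha}\om^*(z)$.

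The main obstacle I anticipate is not the kernel identity but pinning down the exact exponent in the relation between $\widehat{\Waw}$ and $\om^*$, and making sure the constants $\om(\BB)$, $4$ versus $8$, and the $|z|^{2n}$ in the definition of $\Waw$ all track correctly through the orthogonality computation — a small slip there changes the claimed power $(1-|z|)^{\alpha-n+1}$. I would handle this by carefully normalizing: write the $k$-th homogeneous part of $K_z^\Waw$ directly as $\frac{\langle w,z\rangle^k}{\|\langle\cdot,z/|z|^2\rangle^k\cdot|z|^{2k}\|^2}$-type quotient, check the single case $n=1$ against the disk result in \cite{PjaRj2014book,PjaRj2016} where this kernel is known, and only then invoke Lemma \ref{0507-1}(i)(c), Lemma \ref{1210-3}, and $(\ref{0605-1})$ to convert the asymptotic $\widehat{\Waw}(1-1/k)$-sum into the closed form. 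The restriction $\alpha<1$ (rather than just $\alpha<2$) enters precisely to keep the exponent $n-3+$ (shift) in a range where the geometric tail sum converges to the clean expression; for $1\le\alpha<2$ the kernel still exists but the estimate would take a different form, which is why the lemma states it only for $\alpha<1$.
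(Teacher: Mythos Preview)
Your proposal is correct and follows essentially the same route as the paper: the kernel formula via the orthogonal expansion in monomials (the paper phrases it through the reproducing identity $f(z)=\langle f,K_z^{\Waw}\rangle$, but the computation is identical), then the norm estimate via Stirling, the moment asymptotic $(\Waw)_{2n+2k-1}\approx k^{\alpha-1}\om^*(1-1/k)$, and concentration of the resulting series near $k\approx N=1/(1-|z|)$. The paper makes the concentration step explicit by a three-range split $k\le N$, $N\le k\le 2N$, $k>N$, invoking that $\{(k+1)^2\om^*_{2k+1}\}$ is decreasing (from (4.5) in \cite{PjaRj2014book}) for the first range and the essential increase of $\om^*(t)/(1-t)^b$ from Lemma~\ref{0507-1} for the third --- precisely the routine bookkeeping you flag as the main obstacle.
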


\begin{proof}For any fixed $z\in\BB$, let
$$K_z^{\Waw}(w)=\sum_{m}a_m(z)w^m.$$
Let $f(z)=\sum\limits_{m}b_mz^m$ be a polynomial. By  (1.22) in \cite{Zk2005}, we have
\begin{align*}
f(z) &=\ol{a_0(z)}b_0\om(\BB)+4\int_\BB \left(\sum_{|m|>0}|m|b_m w^m\right)\ol{\left(\sum_{|m|>0}|m|a_m(z) w^m\right)}\Waw(w)dV(w)\\
&= \ol{a_0(z)}b_0\om(\BB)+4\sum_{|m|>0}2n |m|^2 \ol{a_m(z)}b_m   \frac{(n-1)!m!}{(n-1+|m|)!}(\Waw)_{2n+2|m|-1}.
\end{align*}
Then
$$a_0(z)=\frac{1}{\om(\BB)},\,\,\mbox{ and }\,\,\ol{a_m(z)}=\frac{(n-1+|m|)!}{8n! m! |m|^2(\Waw)_{2n+2|m|-1}}z^m,\,\,\mbox{ when }\,\,|m|>0.$$
Therefore,
\begin{align*}
K_z^{\Waw}(w)
&=\frac{1}{\om(\BB)}+\sum_{|m|>0}\frac{(n-1+|m|)!}{8n! m! |m|^2(\Waw)_{2n+2|m|-1}}  w^m \ol{z}^m   \\
&=\frac{1}{\om(\BB)}+\frac{1}{8n!}\sum_{k=1}^\infty \frac{(n-1+k)!}{k^2 k!(\Waw)_{2n+2k-1}}\sum_{|m|=k} \frac{|m|!}{m!} w^m \ol{z}^m   \\
&=\frac{1}{\om(\BB)}+\frac{1}{8n!}\sum_{k=1}^\infty \frac{(n-1+k)!}{k^2 k!(\Waw)_{2n+2k-1}}\langle w,z \rangle^k.
\end{align*}

Recall that
$$
(\Waw)_{2n+2k-1}
=\int_0^1 t^{2n+2k-1}\frac{(1-t)^{-\alpha}\om^{n*}(t)}{t^{2n}}dt=\int_0^1 t^{2k-1}(1-t)^{-\alpha}\om^{n*}(t)dt,$$
and
$$\,\,\om^{n*}(t)=\int_t^1 s^{2n-1}\log \frac{s}{t}\om(s)ds,\,\,\om^*(t)=\int_t^1 s\log\frac{s}{t}\om(s)ds.$$
By Lemmas \ref{0507-1} and (\ref{1210-3}), we have
\begin{align}
(\Waw)_{2n+2k-1}=\int_0^1 t^{2k-1}(1-t)^{-\alpha}\om^*(t)dt\approx \frac{\om^*(1-\frac{1}{2k})}{(2k)^{-\alpha+1}}
\approx \frac{\om^*(1-\frac{1}{2n+2k-2})}{(2n+2k-2)^{-\alpha+1}}.\label{0607-6}
\end{align}

When $|z|>\frac{1}{2}$ and $\alpha<1$, by Stirling's formula, we get
\begin{align}
\|K_z^{\Waw}\|_{H(\Waw)}^2
&=K_z^{\Waw}(z) \approx 1 + \sum_{k=1}^\infty \frac{k^{n-3}|z|^{2k}}{(\Waw)_{2n+2k-1}}  \nonumber \\
&\approx 1+\sum_{k=1}^\infty \frac{k^{n-1}|z|^{2k}}{k^{\alpha+2}\om_{2k-1}^*}
\approx  \sum_{k=0}^\infty \frac{(k+1)^{n-1}|z|^{2k+2}}{(k+1)^{\alpha+2}\om_{2k+1}^*}. \label{0606-1}
\end{align}
Using (4.5) in \cite{PjaRj2014book}, it is easy to see that $\{(k+1)^2\om_{2k+1}^*\}_{k=0}^\infty$ is decreasing.
Suppose $1-\frac{1}{N}\leq |z|<1-\frac{1}{N+1}$ for some $N\in\NN$ and $N>1$.
By Lemmas \ref{0507-1} and \ref{1210-3}, we have
\begin{align}
\sum_{k=0}^N \frac{(k+1)^{n-1}|z|^{2k+2}}{(k+1)^{\alpha+2}\om_{2k+1}^*}
&\leq \frac{1}{(N+1)^2\om_{2N+1}^*}\sum_{k=0}^\infty \frac{|z|^{2k+2}}{(k+1)^{\alpha+1-n}} \nonumber \\
&\approx \frac{(1-|z|)^{\alpha-n}}{(N+1)^2\om_{2N+1}^*}
\approx \frac{(1-|z|)^{\alpha-n+1}}{\om^*(z)},\label{0606-2}
\end{align}
and
\begin{align}
\sum_{k=N}^\infty \frac{(k+1)^{n-1}|z|^{2k+2}}{(k+1)^{\alpha+2}\om_{2k+1}^*}
&\geq \frac{1}{(N+1)^2\om_{2N+1}^*}\sum_{k=N}^{2N} \frac{|z|^{2k+2}}{(k+1)^{\alpha+1-n}} \nonumber \\
&\gtrsim \frac{1}{(N+1)^2\om_{2N+1}^*}\frac{(N+1)(1-\frac{1}{N})^{4N+2}}{(N+1)^{\alpha+1-n}} \nonumber \\
&\gtrsim \frac{1}{(N+1)^{\alpha+2-n}\om_{2N+1}^*}
\approx \frac{(1-|z|)^{\alpha-n+1}}{\om^*(z)}.\label{0606-3}
\end{align}

By Lemmas \ref{0507-1} and \ref{1210-3}, there is a $b>0$ such that $\frac{\om^*(t)}{(1-t)^b}$ is essential increasing.
Assume $n-\alpha+b-2>0$. Then
\begin{align}
\sum_{k=N+1}^\infty \frac{(k+1)^{n-1}|z|^{2k+2}}{(k+1)^{\alpha+2}\om_{2k+1}^*}
&\approx \sum_{k=N+1}^\infty \frac{(k+1)^{n-\alpha-2}|z|^{2k+2}}{\om^*(1-\frac{1}{2k+1})}
\approx \sum_{k=N+1}^\infty \frac{(k+1)^{n-\alpha+b-2}|z|^{2k+2}}{\frac{\om^*(1-\frac{1}{2k+1})}{\left(1-(1-\frac{1}{2k+1})\right)^b}}\nonumber\\
&\lesssim  \frac{1}{(2N+3)^b\om^*(1-\frac{1}{2N+3})}\sum_{k=0}^\infty (k+1)^{n-\alpha+b-2}|z|^{2k}\nonumber\\
&\approx  \frac{1}{(2N+3)^b\om^*(1-\frac{1}{2N+3})} \frac{1}{(1-|z|^2)^{n-\alpha+b-1}}  \nonumber\\
&\approx \frac{(1-|z|)^{\alpha-n+1}}{\om^*(z)}.  \label{0606-4}
\end{align}
By (\ref{0606-1})-(\ref{0606-4}), we obtain that (\ref{0606-5}) holds. The proof is complete.
\end{proof}

\begin{Lemma}\label{0607-7} Suppose $\alpha<1$ and $\om\in\hD$. There exists $C=C(\om)>0$, such that for all  $f(z)=\sum_m a_mz^n$, we have
$$\frac{1}{C}\|f\|_{H(\Waw)}^2\leq \sum\limits_{m}\frac{(|m|+1)^{\alpha+2} m!}{(n-1+|m|)!} \om_{2|m|+1}^*|a_m|^2 \leq C\|f\|_{H(\Waw)}^2 .$$
\end{Lemma}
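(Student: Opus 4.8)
The plan is to diagonalize the quadratic form $\|f\|_{H(\Waw)}^2$ against the monomial basis, exactly as in the proof of Lemma \ref{0607-1}, and then to replace the moments $(\Waw)_{2n+2|m|-1}$ by $\om^*_{2|m|+1}$ using the asymptotics already recorded in \eqref{0607-6}. First I would take $f(z)=\sum_m a_m z^m$ and use the orthogonality of the monomials in $L^2(\BB,\Waw\,dV)$ together with the identity $\int_\BB |z^m|^2\Waw(z)\,dV(z)=2n\,\frac{(n-1)!\,m!}{(n-1+|m|)!}(\Waw)_{2n+2|m|-1}$ (the same computation, via (1.22) of \cite{Zk2005}, that underlies the proof of Lemma \ref{0607-1}), applied to $\Re f=\sum_{|m|>0}|m|a_m z^m$, to obtain
$$\|f\|_{H(\Waw)}^2=|a_0|^2\om(\BB)+8n(n-1)!\sum_{|m|>0}\frac{m!}{(n-1+|m|)!}\,|m|^2\,(\Waw)_{2n+2|m|-1}\,|a_m|^2 .$$
For a polynomial this is an identity; for general $f\in H(\BB)$ it follows by applying the identity to the Taylor polynomials of $f$ (equivalently to the dilates $f_r$) and letting the degree tend to $\infty$ (resp.\ $r\to 1$), since both sides are monotone along these approximations. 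It therefore suffices to compare, for $|m|\ge 1$, the quantity $|m|^2(\Waw)_{2n+2|m|-1}$ with $(|m|+1)^{\alpha+2}\om^*_{2|m|+1}$, and to note separately that the $m=0$ contributions $|a_0|^2\om(\BB)$ and $\frac{1}{(n-1)!}\om^*_{1}|a_0|^2$ are comparable, because $\om(\BB)=2n\,\om_{2n-1}$ and $\om^*_1$ are both fixed positive constants depending only on $\om$ and $n$.

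The crux is the coefficient asymptotics. By \eqref{0607-6} one has $(\Waw)_{2n+2k-1}\approx \om^*\!\big(1-\tfrac1{2k}\big)\,(2k)^{\alpha-1}$ for $k\ge 1$. Since $\om^*\in\R$ by Lemma \ref{1210-3}(i), Lemma \ref{0507-1}(i)(c) applied to $\om^*$ gives $\om^*_{2k+1}=\int_0^1 t^{2k+1}\om^*(t)\,dt\approx \widehat{\om^*}\!\big(1-\tfrac1{2k+1}\big)$, and regularity of $\om^*$ gives $\widehat{\om^*}(r)\approx(1-r)\om^*(r)$; combining these with $\om^*\!\big(1-\tfrac1{2k+1}\big)\approx\om^*\!\big(1-\tfrac1{2k}\big)$ (again by regularity, as $\tfrac1{2k+1}\approx\tfrac1{2k}$ and $\om^*\in\R\subset\hD$, using Lemma \ref{1210-3}(iii) for $\om^*$) yields $\om^*\!\big(1-\tfrac1{2k}\big)\approx k\,\om^*_{2k+1}$. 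Hence $(\Waw)_{2n+2k-1}\approx k^{\alpha}\om^*_{2k+1}$, so that $|m|^2(\Waw)_{2n+2|m|-1}\approx |m|^{\alpha+2}\om^*_{2|m|+1}\approx(|m|+1)^{\alpha+2}\om^*_{2|m|+1}$ for $|m|\ge 1$. Substituting this into the displayed formula for $\|f\|_{H(\Waw)}^2$ and absorbing the $m=0$ term as above delivers the two-sided estimate, with constants depending only on $\om$ and $n$.

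The only genuinely delicate point is uniform control of the constants in the moment asymptotics: one must check that the comparisons in \eqref{0607-6} and in Lemma \ref{0507-1}(i)(c) hold uniformly in $k$ (they do, being stated for all $x\ge 1$), and that the passage from $\om^{n*}$ to $\om^*$ implicit in \eqref{0607-6}, which is only a boundary comparison, does not distort the moments, because for large $k$ the contribution of $t\in[0,\tfrac12]$ to $(\Waw)_{2n+2k-1}$ is exponentially small relative to that of $t\in[\tfrac12,1)$. Everything else is routine bookkeeping with Stirling's formula and the elementary properties of $\hD$ and $\R$ weights gathered in Lemmas \ref{0507-1} and \ref{1210-3}.
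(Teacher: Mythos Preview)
Your proposal is correct and follows essentially the same route as the paper's proof: diagonalize $\|f\|_{H(\Waw)}^2$ via the orthogonality of monomials on $\SS$ and polar integration (the paper cites (1.21) and Lemma~1.11 of \cite{Zk2005}, which amounts to your moment identity), then invoke \eqref{0607-6} to replace $(\Waw)_{2n+2|m|-1}$ by $(|m|+1)^{\alpha-1}\om^*\!\big(1-\tfrac1{2|m|}\big)$, and finally pass to $\om^*_{2|m|+1}$ using $\om^*\in\R$ together with Lemma~\ref{0507-1}(i)(c). Your treatment is in fact slightly more careful than the paper's terse write-up, since you flag the $\om^{n*}$ versus $\om^*$ issue hidden in \eqref{0607-6} and handle the $m=0$ term explicitly.
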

\begin{proof}By (1.21) and Lemma 1.11 in \cite{Zk2005}, (\ref{0607-6}) and Lemma \ref{1210-3}, we have
\begin{align*}
\|f\|_{H(\Waw)}^2
&=|f(0)|^2 \om(\BB)+4\int_\BB |\Re f(z)|^2 \Waw(z)dV(z) \\
&\approx |a_0|^2+\sum_{|m|>0}|m|^2|a_m|^2\int_0^1 r^{2n+2|m|-1}\Waw(r)dr\int_\SS |\eta^m|^2d\sigma(\eta)\\
&\approx |a_0|^2+\sum_{|m|>0}\frac{|m|^{\alpha+2} m!}{(n-1+|m|)!}  \om^*(1-\frac{1}{2|m|}) |a_m|^2 \\
&\approx \sum_{m}\frac{(|m|+1)^{\alpha+2} m!}{(n-1+|m|)!} \om_{2|m|+1}^*|a_m|^2.
\end{align*}
The proof is complete.
\end{proof}

\begin{Theorem}\label{0828-1} Let $\om\in\hD$, $\mu$ be a positive Borel measure on $\BB$, $1\leq p<\infty$ and $-\infty<\alpha<1$ such that $p\alpha<1$. Let
$$\widehat{\mu_{r,\alpha}}=\frac{\mu(D(z,r))}{(1-|z|)^{-\alpha+n-1}\om^*(z)},\,\,\,\,d\lambda(z)=\frac{dV(z)}{(1-|z|^2)^{n+1}}.$$
Then the following statements are equivalent.
\begin{enumerate}[(i)]
  \item $\HT_\mu\in \mcS_p(H(\Waw))$;
  \item $$M_\mu=\sum\limits_{R_{k,j}\in\Upsilon}\left(\frac{\mu(R_{k,j})}{(1-|c_{k,j}|)^{-\alpha+n-1}\om^*(c_{k,j})}\right)^p<\infty;$$
  \item $\widehat{\mu_{r,\alpha}}\in L^p(\BB,d\lambda)$ for some (equivalently, for all) $r>0$.
\end{enumerate}
Moreover, $|\HT_\mu|_p^p\approx M_\mu\approx\|\widehat{\mu_{r,\alpha}}\|_{L^p_\lambda}^p.$
\end{Theorem}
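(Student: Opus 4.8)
The plan is to prove the equivalence by the standard ``atomic decomposition + Schatten characterization via a frame'' method, treating $H(\Waw)$ as a reproducing kernel Hilbert space with kernel $K_z^{\Waw}$ computed in Lemma \ref{0607-1}. First I would record the normalized reproducing kernel $k_z^{\Waw}=K_z^{\Waw}/\|K_z^{\Waw}\|_{H(\Waw)}$ and the Berezin transform $\widetilde{\HT_\mu}(z)=\langle \HT_\mu k_z^{\Waw},k_z^{\Waw}\rangle_{H(\Waw)}=\int_\BB |k_z^{\Waw}(w)|^2\,d\mu(w)$; using \eqref{0606-5} together with Lemma \ref{0313-2}-type estimates and Proposition \ref{0421-1} (applied to the regular weight $\om^*_{-\alpha-2}\in\R$, which controls $\Waw$) one gets $\widetilde{\HT_\mu}(z)\approx \widehat{\mu_{r,\alpha}}(z)$ up to the usual averaging over a Bergman ball, and more crucially $\|K_z^{\Waw}\|^2\approx K_z^{\Waw}(z)$ and $|K_z^{\Waw}(w)|\approx K_z^{\Waw}(z)$ for $w\in D(z,r)$, by the same segment-integration trick used in Lemmas \ref{0413-1} and \ref{0422-1} (note $v_a$ in Lemma \ref{0627-1} is essentially the primitive of $B_a^\om$ appearing in $K_z^{\Waw}$). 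The equivalence (ii)$\Leftrightarrow$(iii) is then a routine discretization: $\{R_{k,j}\}\in\Upsilon$ is a tiling of $\BB$ with $c_{k,j}$ as centers, $(1-|c_{k,j}|)^{-\alpha+n-1}\om^*(c_{k,j})\approx (1-|z|)^{-\alpha+n-1}\om^*(z)$ on $R_{k,j}$ by Lemma \ref{1210-3}, and $\lambda(R_{k,j})\approx 1$; comparing $R_{k,j}$ with Bergman balls $D(z,r)$ via Lemma \ref{0618-1} converts the integral $\|\widehat{\mu_{r,\alpha}}\|_{L^p_\lambda}^p$ into the sum $M_\mu$ with constants depending only on $r$, and the $r$-independence follows exactly as in the proof of Lemma \ref{0424-1} using a finite-overlap covering argument plus boundedness of an averaging operator on $L^p_\lambda$.

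The main work is (i)$\Leftrightarrow$(ii)/(iii). For the sufficiency direction ($M_\mu<\infty\Rightarrow \HT_\mu\in\mcS_p$), I would fix a $\delta$-lattice $\{a_i\}$ (Bergman-metric separated, $\BB=\cup D(a_i,r)$) and write $\HT_\mu=\sum_i \HT_{\mu_i}$ with $\mu_i=\chi_{R(a_i)}\mu$ for a measurable partition subordinate to the lattice; each $\HT_{\mu_i}$ has rank essentially one after comparison with the rank-one operator $f\mapsto \langle f,k_{a_i}^{\Waw}\rangle\, k_{a_i}^{\Waw}\cdot \mu(R(a_i))\|K_{a_i}^{\Waw}\|^2$, so $\|\HT_{\mu_i}\|\lesssim \widehat{\mu_{r,\alpha}}(a_i)\approx \mu(R_{k,j})/((1-|c_{k,j}|)^{-\alpha+n-1}\om^*(c_{k,j}))$, and for $p\ge 1$ one sums: $|\HT_\mu|_p^p\le \big(\sum_i |\HT_{\mu_i}|_p\big)^p$ needs care, so instead I would use that for $p\geq 1$ the $\mcS_p$ (quasi-)norm is controlled by $\sum_i |\HT_{\mu_i}|_p^p$ when the pieces are ``almost orthogonal,'' or more cleanly embed via the frame: define $S:H(\Waw)\to \ell^p$ by $(Sf)_i = f(a_i)\,\mu(R_i)^{1/p}\|K_{a_i}^{\Waw}\|^{2/p'}$-type coefficients and factor $\HT_\mu$ (or rather compare it) through a diagonal operator with entries $\widehat{\mu_{r,\alpha}}(a_i)$, whose $\mcS_p$ membership is exactly $\sum_i \widehat{\mu_{r,\alpha}}(a_i)^p\approx M_\mu$. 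For the necessity direction ($\HT_\mu\in\mcS_p\Rightarrow M_\mu<\infty$), I would test against the orthonormalized-kernel family: by the general principle $\sum_i \langle \HT_\mu e_i, e_i\rangle^p\lesssim |\HT_\mu|_p^p$ for any orthonormal-ish system when $p\ge 1$ (convexity/interpolation: $p\mapsto |\cdot|_p$ and $\ell^p\hookrightarrow \mcS_p$ on diagonals), applied with $e_i$ built from $k_{a_i}^{\Waw}$ over a sufficiently separated lattice so that the Gram matrix is close to the identity, one gets $\sum_i \widetilde{\HT_\mu}(a_i)^p\lesssim |\HT_\mu|_p^p$, and since $\widetilde{\HT_\mu}(a_i)\gtrsim \int_{D(a_i,r)}|k_{a_i}^{\Waw}|^2 d\mu\gtrsim \widehat{\mu_{r,\alpha}}(a_i)\approx \mu(R_{k,j})/((1-|c_{k,j}|)^{-\alpha+n-1}\om^*(c_{k,j}))$, summing over a lattice dense enough to see every $R_{k,j}$ finitely often yields $M_\mu\lesssim |\HT_\mu|_p^p$.

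The hard part, and where I would spend the most care, is making the frame/almost-orthogonality argument rigorous for the \emph{full range} $p\geq 1$ rather than just $p=2$ (where everything is a clean trace identity): one must either invoke the abstract result that for a positive operator $T$ on a Hilbert space and an orthonormal basis $\{e_i\}$, $T\in\mcS_p$ iff $\sum_i \langle Te_i,e_i\rangle^p<\infty$ fails in general but the \emph{upper} bound $\sum\langle Te_i,e_i\rangle^p\le |T|_p^p$ holds for $p\ge1$ and the reverse requires the system to be a lattice-perturbation of orthonormal with small overlap — this is handled by choosing the lattice separation large depending on $p$ so that the frame operator $\sum_i \langle\cdot,k_{a_i}^{\Waw}\rangle k_{a_i}^{\Waw}$ differs from a multiple of the identity by an operator of small norm, exactly as in Pel\'aez--R\"atty\"a \cite{PjaRj2014book, PjaRj2016}. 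I would also need the two-sided pointwise kernel estimate $|K_z^{\Waw}(w)|\approx \|K_z^{\Waw}\|_{H(\Waw)}^2$ for $w\in D(z,r)$ uniformly, which I would derive by combining \eqref{0606-5} with a Lemma \ref{0627-1}-style lower bound transported through the identification $H(\Waw)\simeq A^2_{\om^*_{-\alpha-2}}$ from Lemma \ref{0607-8} — this identification is what lets all the $A_\om^2$-kernel technology (Lemmas \ref{0313-1}, \ref{0313-2}, \ref{0413-1}, \ref{0422-1}) be reused verbatim. Once these three ingredients (kernel asymptotics, averaging/discretization with $r$-independence, almost-orthogonal frame for $p\ge1$) are in place, the chain (i)$\Rightarrow$(ii), (ii)$\Leftrightarrow$(iii), (iii)$\Rightarrow$(i) closes with the norm comparison $|\HT_\mu|_p^p\approx M_\mu\approx\|\widehat{\mu_{r,\alpha}}\|_{L^p_\lambda}^p$ falling out of the same estimates.
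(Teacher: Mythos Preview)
Your outline for (ii)$\Leftrightarrow$(iii) is correct and matches the paper's discretization via Lemma \ref{0618-1}. Your necessity argument (i)$\Rightarrow$(ii) is also essentially the paper's: build a bounded operator $J$ from an orthonormal set to the normalized kernel-primitives $v_{a_j}/\|B_{a_j}^{\Waw}\|$ (this is exactly Lemma \ref{0627-1}), pass to $J^*\HT_\mu J\in\mcS_p$, then use the diagonal estimate $\sum_j\langle J^*\HT_\mu J\, e_j,e_j\rangle^p\lesssim |\HT_\mu|_p^p$ valid for positive operators when $p\ge1$, and lower bound each term by $\mu(\Delta(a_j,r))/((1-|a_j|)^{-\alpha+n-1}\om^*(a_j))$.

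The genuine gap is in (ii)$\Rightarrow$(i) for $1<p<\infty$. Writing $\HT_\mu=\sum_i\HT_{\mu_i}$ and treating each piece as ``essentially rank one'' does not work: $\HT_{\mu_i}$ is not close in norm to a rank-one operator, and while $|\HT_{\mu_i}|_1\approx\widehat{\mu_{r,\alpha}}(a_i)$ does give the $p=1$ case via the triangle inequality in $\mcS_1$, for $p>1$ the triangle inequality yields only $|\HT_\mu|_p\le\sum_i|\HT_{\mu_i}|_p$, i.e.\ $\ell^1$ control of the pieces, not $\ell^p$. Your alternative ``factor through a diagonal'' is not an exact factorization of $\HT_\mu$, and making the frame Gram matrix close to the identity at large separation helps with necessity but does not by itself close the sufficiency direction. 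The paper resolves this by \emph{complex interpolation in the Schatten scale}: it first proves (ii)$\Rightarrow$(i) for $p=1$ by the trace identity (matching your intuition), then for $1<p<\infty$ constructs an analytic family $S_\zeta$ on the strip $0\le\Rp\zeta\le1$ with $S_{1/p}=\HT_\mu$, built from fractional differentiation operators $G_\zeta:H(\Waw)\to H(W_\gamma^\om)$ (with $\gamma=\alpha-2\varepsilon(1-p\Rp\zeta)$) and a reweighted measure $\mu_\zeta$, shows $|S_\zeta|_\infty\le M_0$ on $\Rp\zeta=0$ and $|S_\zeta|_1\le M_1$ on $\Rp\zeta=1$, and invokes the Gohberg--Krein interpolation theorem \cite{GiKm1969} to conclude $|\HT_\mu|_p\le M_0^{1-1/p}M_1^{1/p}$. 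This is also exactly where the hypothesis $p\alpha<1$ is used: one needs $\varepsilon$ with $\alpha<2\varepsilon<(1-\alpha)/(p-1)$ so that at the $\Rp\zeta=0$ endpoint $\gamma=\alpha-2\varepsilon<0$ and Lemma \ref{0607-8} identifies $H(W_\gamma^\om)$ with a genuine Bergman space, while at $\Rp\zeta=1$ one still has $\gamma=\alpha+2\varepsilon(p-1)<1$ so the kernel estimate \eqref{0606-5} applies. Your outline never explains where $p\alpha<1$ enters, which is a symptom that the interpolation step is the missing essential ingredient.
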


\begin{proof}{\bf {\it (ii)}$\Rightarrow${\it (i)}}. The proof  will be divided into two steps.  Suppose $M_\mu<\infty$.

{\bf The first step.} Assume  $\mu$ is a compactly supported positive Borel measure. It is easy to know that $\HT_\mu$ is compact on $H(\Waw)$.
Then the canonical decomposition of $\HT_\mu$ is, see \cite{zhu} for example,
$$\HT_\mu g(z)=\sum_{k=} \lambda_k\langle g,e_k\rangle_{H(\Waw)}f_k,\,\,\mbox{ for   }\,\,g\in H(\Waw).$$
Here $\{e_k\}$, $\{f_k\}$ are orthogonal  sets in $H(\Waw)$ and  $\{\lambda_k\}$ is the singular values of $\HT_\mu$.
By Fubini's theorem, Cauchy-Schwarz's inequality, Lemmas \ref{0607-1} and  \ref{1210-3},  we get
\begin{align}
|\HT_\mu|_1^1
&=\sum_{k} |\lambda_k|=\sum_{k}\left|\langle\HT_\mu e_k, f_k\rangle_{H(\Waw)}\right|  \nonumber\\
&=\sum_{k} \left|\om(\BB)\HT_\mu e_k(0)\ol{f_k(0)}+4\int_\BB \Re(\HT_\mu e_k)(z)\ol{\Re f_k(z)}\Waw(z)   dV(z)\right|
\nonumber\\
&=\sum_k \left|\int_\BB\ol{\left( \om(\BB)f_k(0)\ol{K_w^\Waw(0)}+4\int_\BB \Re f_k(z)\ol{\Re K_w^\Waw(z)}  \Waw(z)dV(z)   \right)}e_k(w)d\mu(w)\right|
\nonumber\\
&=\sum_k\left|\int_\BB e_k(w)\ol{\langle f_k, K_w^\Waw\rangle_{H(\Waw)}}d\mu(w)\right|=\sum_k\left|\int_\BB e_k(w)\ol{f_k(w)}d\mu(w)\right|
\nonumber\\
&\leq \int_\BB \left(\sum_k |e_k(w)|^2\right)^\frac{1}{2}  \left(\sum_k |f_k(w)|^2\right)^\frac{1}{2} d\mu(w)
\leq \int_\BB \|K_w^\Waw\|_{H(\Waw)}^2  d\mu(w)
\nonumber\\
&\leq \int_\BB \frac{(1-|w|)^{\alpha-n+1}}{\om^*(w)}d\mu(w)
=\sum_{R_{k,j}\in \Upsilon}\int_{R_{k,j}} \frac{(1-|w|)^{\alpha-n+1}}{\om^*(w)}d\mu(w)
\nonumber\\
&\approx \sum_{R_{k,j}\in \Upsilon}  \frac{\mu(R_{k,j})}{(1-|c_{k,j}|)^{-\alpha+n-1}\om^*(c_{k,j})}.\nonumber
\end{align}
Thus we proved the assertion with the case  $p=1$.

Now we assume  $1<p<\infty$ and $p\alpha<1$. Take $\varepsilon>0$ such that $\alpha<2\varepsilon<\frac{1-\alpha}{p-1}$ and $\Lambda=\{\zeta\in\CC: 0\leq \Rp\zeta\leq 1\}$.
For any fixed $\zeta\in\Lambda$, define
$$G_\zeta(\sum_m a_m z^m)=\sum_m (|m|+1)^{\varepsilon(1-p\zeta)}a_m z^m,$$
and $\gamma=\alpha-2\varepsilon(1-p\Rp \zeta)$.
By Lemma \ref{0607-7}, we have
\begin{itemize}
  \item  for all $f\in H(\Waw)$, $\|G_\zeta f\|_{H(W_\gamma^\om)}=\|G_{\Rp \zeta} f\|_{H(W_\gamma^\om)}$;
  \item  $G_\zeta:H(\Waw)\to H(W_\gamma^\om)$ is uniformly bounded and invertible.
\end{itemize}
If $\Rp \zeta=0$, we have $H(W_\gamma^\om)\simeq A_{\om_{-(\alpha-2\varepsilon)-2}^*}^2$ by Lemma \ref{0607-8}.

Let
\begin{align}\label{0608-1}
\mu_\zeta=\sum_{R_{k,j}\in\Upsilon}\left(\frac{\mu(R_{k,j})}{(1-|c_{k,j}|)^{-\alpha+n-1}\om^*(c_{k,j})}\right)^{p\zeta-1}\chi_{R_{k,j}} \mu
\end{align}
and the operator $S_\zeta$ on $H(\Waw)$ be defined by
$$S_\zeta f(z)=\int_\BB G_\zeta f(w)\ol{G_{\ol{\zeta}}K_z^\Waw(w)}(1-|w|)^{2\varepsilon(1-p\zeta)}d\mu_\zeta(w),$$
here, $\chi_{R_{k,j}}=0$ if $\mu(R_{k,j})=0$ and  $\chi_{R_{k,j}}=1$ otherwise.
So, $\mu_\zeta$ is compactly supported and the series at the right side of (\ref{0608-1}) is a sum of finite terms.
Since $G_\zeta:H(\Waw)\to H(W_\gamma^\om)$ is uniformly bounded,  for any fixed $0<r<1$, there is a $C=C(r)$ such that
$$\sup_{0\leq |z|\leq r} |G_\zeta f(z)| \leq C(r)\|G_\zeta f\|_{H(W_\gamma^\om)}\lesssim C(r) \| f\|_{H(W_\alpha^\om)}.$$
So, $\|S_\zeta\|_{H(\Waw)\to H(\Waw)}$ is uniformly bounded on $\Lambda$.
If we can find $0<M_0, M_1<\infty$ such that
$$|S_\zeta|_\infty\leq M_0\,\,\mbox{ when }\,\,\Rp \zeta=0,
\,\,\mbox{ and }\,\,
|S_\zeta|_1\leq M_1\,\,\mbox{ when }\,\,\Rp \zeta=1,$$
by Theorem 13.1 in Chapter 3 of \cite{GiKm1969}, we have
\begin{align}\label{0608-2}
|S_\zeta|_p\leq M_0^{1-\frac{1}{p}}M_1^{\frac{1}{p}}\,\,\mbox{ when }\,\,\Rp \zeta=\frac{1}{p}.
\end{align}
Here, we should note that $|S_\zeta|_\infty=\|S_\zeta\|_{H(\Waw)\to H(\Waw)}$ and  $S_\frac{1}{p}=\HT_\mu$.

Next, we  estimate $\|S_\zeta\|_{H(\Waw)\to H(\Waw)}$ when $\Rp \zeta=0$. Let $f,g\in H(\Waw)$.
Since $\Rp \zeta=0$, then  $G_{\ol{\zeta}}g(0)=g(0)$ and
\begin{align*}
\int_\BB \Re g(z)\Re (G_{\ol{\zeta}} K_z^\Waw(w))\Waw(z)dV(z)
&=\int_\BB \Re (G_{\ol{\zeta}} g)(z)\Re ( K_z^\Waw(w))\Waw(z)dV(z)\\
&=\frac{1}{4}\left(  (G_{\ol{\zeta}} g)(w)-g(0)\right).
\end{align*}
Therefore, by Lemma \ref{0607-1} and Fubini's Theorem, we get
\begin{align*}
& \langle S_\zeta f,g \rangle_{H(\Waw)} \\
=&\om(\BB)\ol{g(0)}\int_\BB G_\zeta f(w)\ol{G_{\ol{\zeta}}K_0^\Waw(w)}(1-|w|)^{2\varepsilon(1-p\zeta)}d\mu_\zeta(w)\\
&+4\int_\BB \Re\left(\int_\BB G_\zeta f(w)\ol{G_{\ol{\zeta}}K_z^\Waw(w)}(1-|w|)^{2\varepsilon(1-p\zeta)}d\mu_\zeta(w)\right) \ol{\Re g(z)}\Waw(z) dV(z)  \\
=&\ol{g(0)}\int_\BB G_\zeta f(w)(1-|w|)^{2\varepsilon(1-p\zeta)}d\mu_\zeta(w)\\
&+4\int_\BB \left(\int_\BB G_\zeta f(w)\ol{\Re G_{\ol{\zeta}}K_z^\Waw(w)}(1-|w|)^{2\varepsilon(1-p\zeta)}d\mu_\zeta(w)\right) \ol{\Re g(z)}\Waw(z) dV(z)  \\
=&\ol{g(0)}\int_\BB G_\zeta f(w)(1-|w|)^{2\varepsilon(1-p\zeta)}d\mu_\zeta(w)\\
&+4\int_\BB \ol{\left(\int_\BB \Re G_{\ol{\zeta}}K_z^\Waw(w)\Re g(z)\Waw(z) dV(z)\right)}G_\zeta f(w)(1-|w|)^{2\varepsilon(1-p\zeta)}d\mu_\zeta(w)\\
&=\int_\BB G_\zeta f(w) \ol{G_{\ol{\zeta}}g(w)}(1-|w|)^{2\varepsilon(1-p\zeta)}d\mu_\zeta(w).
\end{align*}
Then, when $\Rp \zeta=0$,   H\"older's inequality induces that
{\small
 \begin{align}
 \left|\langle S_\zeta f,g \rangle_{H(\Waw)} \right|
& =\left|\int_\BB G_\zeta f(w) \ol{G_{\ol{\zeta}}g(w)}(1-|w|)^{2\varepsilon(1-p\zeta)}d\mu_\zeta(w) \right|\label{0619-1}\\
& \leq \left(\int_\BB |G_\zeta f(w)|^2(1-|w|)^{2\varepsilon}d|\mu_\zeta|(w)\right)^\frac{1}{2}
\left(\int_\BB |G_{\ol{\zeta}} g(w)|^2(1-|w|)^{2\varepsilon}d|\mu_\zeta|(w)\right)^\frac{1}{2}.\nonumber
 \end{align}
}

Since $G_\zeta:H(\Waw)\to H(W_\gamma^\om)$ is uniformly bounded and $H(W_\gamma^\om)\simeq A_{\om_{-(\alpha-2\varepsilon)-2}^*}^2$, when $\Rp \zeta =0$,
we have $\|G_\zeta f\|_{A_{\om_{-(\alpha-2\varepsilon)-2}^*}^2}\lesssim \|f\|_{H(\Waw)}$. Since $|G_\zeta f|^2$ is  subharmonicity and
$\om_{-(\alpha-2\varepsilon)-2}^*\in\R$, for any fixed $0<s<1$, by Lemma 1.23 in \cite{Zk2005} and Lemma \ref{1210-3}, we have
\begin{align*}
|G_\zeta f(w)|^2
&\leq \frac{\int_{\Delta(w,s)}|G_\zeta f(\eta)|^2\om_{-(\alpha-2\varepsilon)-2}^*(\eta)dV(\eta)}
{\int_{\Delta(w,s)}\om_{-(\alpha-2\varepsilon)-2}^*(\eta)dV(\eta)}\\
&\leq C(s,\om) \frac{\int_{\Delta(w,s)}|G_\zeta f(\eta)|^2\om_{-(\alpha-2\varepsilon)-2}^*(\eta)dV(\eta)}
{(1-|w|)^{2\varepsilon+n-\alpha-1}\om^*(w)}.
\end{align*}
Using Fubini's Theorem and Lemma \ref{0618-1}, we have
\begin{align*}
&\int_\BB |G_\zeta f(w)|^2(1-|w|)^{2\varepsilon}d|\mu_\zeta|(w)\\
\leq& C(s,\om) \int_\BB {\frac{\int_{\Delta(w,s)}|G_\zeta f(\eta)|^2\om_{-(\alpha-2\varepsilon)-2}^*(\eta)dV(\eta)}
{(1-|w|)^{n-\alpha-1}\om^*(w)}}d|\mu_\zeta|(w)  \\
=&C(s,\om) \int_\BB |G_\zeta f(\eta)|^2\om_{-(\alpha-2\varepsilon)-2}^*(\eta)dV(\eta)
\int_{\Delta(\eta,s)} \frac{d|\mu_{\zeta}|(w)}{(1-|w|)^{n-\alpha-1}\om^*(w)}\\
\lesssim &C(s,\om)  \left(\sup_{\eta\in\BB} \frac{|\mu_\zeta|(\Delta(\eta,s))}{(1-|\eta|)^{n-\alpha-1}\om^*(\eta)}\right)
\|G_\zeta f\|_{A_{\om_{-(\alpha-2\varepsilon)-2}^*}^2}^2  \\
\lesssim& C(s,\om)
\left(\sup_{\eta\in\BB} \frac{|\mu_\zeta|(\Delta(\eta,s))}{(1-|\eta|)^{n-\alpha-1}\om^*(\eta)}\right)
\| f\|_{H(\Waw)}^2\\
\lesssim & C(s,\om)
\left(\sup_{R_{k,j}\in\Upsilon} \frac{|\mu_\zeta|(R_{k,j})}{(1-|c_{k,j}|)^{n-\alpha-1}\om^*(c_{k,j})}\right)
\| f\|_{H(\Waw)}^2.
\end{align*}
Similarly, we have
$$
\int_\BB |G_{\ol{\zeta}} g(w)|^2(1-|w|)^{2\varepsilon}d|\mu_\zeta|(w)
\lesssim C(s,\om) \left(\sup_{R_{k,j}\in\Upsilon} \frac{|\mu_\zeta|(R_{k,j})}{(1-|c_{k,j}|)^{n-\alpha-1}\om^*(c_{k,j})}\right)\| g\|_{H(\Waw)}^2.
$$
Therefore, when $\Rp \zeta=0$,
$$
\left|\langle S_\zeta f,g \rangle_{H(\Waw)} \right|
\lesssim C(s,\om) \left(\sup_{R_{k,j}\in\Upsilon} \frac{|\mu_\zeta|(R_{k,j})}{(1-|c_{k,j}|)^{n-\alpha-1}\om^*(c_{k,j})}\right)
\| f\|_{H(\Waw)}  \| g\|_{H(\Waw)}.
$$
At the same time,  since
\begin{align}
 \frac{|\mu_\zeta|(R_{k,j})}{(1-|c_{k,j}|)^{n-\alpha-1}\om^*(c_{k,j})}
 &=\left(\frac{\mu(R_{k,j})}{(1-|c_{k,j}|)^{n-\alpha-1}\om^*(c_{k,j})}\right)^{p\Rp \zeta-1}
       \frac{\mu(R_{k,j})}{(1-|c_{k,j}|)^{n-\alpha-1}\om^*(c_{k,j})}  \nonumber\\
 &=\left(\frac{\mu(R_{k,j})}{(1-|c_{k,j}|)^{n-\alpha-1}\om^*(c_{k,j})}\right)^{p\Rp \zeta},  \label{0618-2}
\end{align}
when $\Rp \zeta=0$, there is a constant $0<M_0=M_0(s,\om)<\infty$, such that, for all $f,g\in H(\Waw)$, we have
$$
\left|\langle S_\zeta f,g \rangle_{H(\Waw)} \right|\leq M_0 \| f\|_{H(\Waw)}  \| g\|_{H(\Waw)}.
$$

Then, we will estimate the upper bound of $|S_\zeta|_1$ for $\Rp\zeta=1$. Assume $\Rp \zeta=1$.
Since   $G_\zeta$ and $G_{\ol{\zeta}}$ are bounded and invertible from $H(\Waw)$ to $H(W_\gamma^\om)$,
we can choose bounded and invertible operators $A$ and $B$ on  $H(\Waw)$ such that
$G_\zeta A$ and $G_{\ol{\zeta}}B$ are unitary operators from $H(\Waw)$ to $H(W_\gamma^\om)$.
Here we should note that $\gamma=\alpha+2\varepsilon(p-1)$ when $\Rp \zeta=1$.
Let $T=B^*S_\zeta A$. By (\ref{0619-1}), for all $f,g\in H(\Waw)$, we have
\begin{align*}
\langle T f,g \rangle_{H(\Waw)}&= \langle S_\zeta A f,Bg \rangle_{H(\Waw)}     \\
& =\int_\BB G_\zeta Af(w) \ol{G_{\ol{\zeta}}Bg(w)}(1-|w|)^{2\varepsilon(1-p\zeta)}d\mu_\zeta(w).
\end{align*}
Now, let $\{f_k\}$ and $\{g_k\}$ be orthogonal  sets on $H(\Waw)$ and
$$e_k=G_\zeta Af_k, \,\,e_k^\p=G_{\ol{\zeta}}Bg_k.$$
Since   $G_\zeta A$ and $G_{\ol{\zeta}}B$ are unitary operators from $H(\Waw)$ to $H(W_\gamma^\om)$,
$\{e_k\}$ and $e_k^\p$ are orthogonal  sets in $H(W_\gamma^\om)$. By   Lemma \ref{0607-1}, we get
$$\sum_k |e_k(z)|^2\leq \|K_z^{W_\gamma^\om}\|_{H(W_\gamma^\om)}^2\approx \frac{(1-|z|)^{\gamma-n}}{\hat{\om}(z)},$$
and
$$\sum_k |e_k^\p(z)|^2\leq \|K_z^{W_\gamma^\om}\|_{H(W_\gamma^\om)}^2\approx \frac{(1-|z|)^{\gamma-n}}{\hat{\om}(z)}.$$
Therefore, by Cauchy-Schwarz's inequality, we obtain
\begin{align*}
\sum_k\left|\langle T f_k,g_k \rangle_{H(\Waw)} \right|
&=\sum_k\left|\int_\BB e_k(w) \ol{e_k^\p(w)}(1-|w|)^{2\varepsilon(1-p\zeta)}d\mu_\zeta(w)\right|    \\
&\leq \int_\BB \left(\sum_k |e_k(w)|^2\right)^\frac{1}{2}  \left(\sum_k |e_k^\p(w)|^2\right)^\frac{1}{2}   (1-|w|)^{2\varepsilon(1-p\Rp\zeta)}d|\mu_\zeta|(w)  \\
&\leq \int_\BB \frac{(1-|w|)^{\gamma-n}}{\hat{\om}(w)}   (1-|w|)^{2\varepsilon(1-p)}d|\mu_\zeta|(w)\\
&=\sum_{R_{k,j}\in\Upsilon} \int_{R_{k,j}} \frac{(1-|w|)^{\alpha-n}}{\hat{\om}(w)}  d|\mu_\zeta|(w)   \\
&\approx \sum_{R_{k,j}\in\Upsilon}  \frac{(1-|c_{k,j}|)^{\alpha-n+1}}{\om^*(c_{k,j})}\int_{R_{k,j}}   d|\mu_\zeta|(w) \\
&=\sum_{R_{k,j}\in\Upsilon} \left( \frac{\mu(R_{k,j})}{(1-|c_{k,j}|)^{-\alpha+n-1}\om^*(c_{k,j})}\right)^p.
\end{align*}
By Theorem 1.27 in \cite{zhu}, we have
$$|T|_1\lesssim \sum_{R_{k,j}\in\Upsilon} \left( \frac{\mu(R_{k,j})}{(1-|c_{k,j}|)^{-\alpha+n-1}\om^*(c_{k,j})}\right)^p.$$
Lemma 1.36 in \cite{zhu} implies that
\begin{align*}
|S_\zeta|_1
&\leq \|(B^*)^{-1}\|_{H(\Waw)\to H(\Waw)} |T|_1  \cdot \|A^{-1}\|_{H(\Waw)\to H(\Waw)}  \\
&\lesssim \sum_{R_{k,j}\in\Upsilon} \left( \frac{\mu(R_{k,j})}{(1-|c_{k,j}|)^{-\alpha+n-1}\om^*(c_{k,j})}\right)^p.
\end{align*}
Using (\ref{0608-2}) and  $S_\frac{1}{p}=\HT_\mu$, we have
$$|\HT_\mu|_p\lesssim \left(\sum_{R_{k,j}\in\Upsilon} \left( \frac{\mu(R_{k,j})}{(1-|c_{k,j}|)^{-\alpha+n-1}\om^*(c_{k,j})}\right)^p\right)^\frac{1}{p}$$
for all $\mu$ with the compactly support subsets and $p>1$.

{\bf The second step.} Suppose $\mu$ is a positive Borel measure such that $M_\mu<\infty$. For $k=1,2,\cdots$ and all Borel set $E\subset\BB$, let
$$\mu_k(E)=\mu(E\cap (1-\frac{1}{k+1})\BB).$$
Then $\mu_k$ is compactly supported and $M_{\mu_k}\leq M_\mu<\infty.$ So, we have $|\HT_{\mu_k}|_p^p\lesssim M_\mu$ and
$$\|\HT_{\mu_k}\|_{H(\Waw)\to H(\Waw)}^p=|\HT_{\mu_k}|_{\infty}^p\leq |T_{\mu_k}|_p^p\lesssim M_\mu.$$

Consider the identity operator $Id_k:H(\Waw)\to L^2(\BB,d\mu_k)$.
Here, we should note that $Id_k$ can also be taken as a operator from $H(\Waw)$ to $L^2(\BB,d\mu)$, which is defined by
$$Id_k f(z)=\chi_{(1-\frac{1}{k+1}\BB)}(z)f(z),\,\,\mbox{for all }f\in H(\Waw).$$
So, we do not distinguish the operators  $Id_k:H(\Waw)\to L^2(\BB,d\mu_k)$ and $Id_k:H(\Waw)\to L^2(\BB,d\mu)$. For any $f,g\in H(\Waw)$, using Fubini's Theorem,
\begin{align}
\langle \HT_{\mu_k} f,g \rangle_{H(\Waw)}&=\HT_{\mu_k} f(0)\ol{g(0)}\om(\BB)+4\int_\BB \Re \HT_{\mu_k} f(z)\ol{\Re g(z)} \Waw(z)dV(z)   \label{0626-1}\\
&=\int_\BB f(w)\ol{g(w)}d\mu_k(w)=\langle Id_k f,Id_k g\rangle_{L^2(\BB,d\mu_k)}=\langle (Id_k)^*Id_k f,g\rangle_{H(\Waw)}.\nonumber
\end{align}
Then we have $\HT_{\mu_k}=(Id_k)^* Id_k$ and
$$\|(Id_k)^* Id_k\|_{H(\Waw)\to H(\Waw)}=\|\HT_{\mu_k}\|_{H(\Waw)\to H(\Waw)}\lesssim M_\mu^\frac{1}{p}.$$
So, $\| Id_k\|_{H(\Waw)\to L^2(\BB,\mu_k)}\lesssim M_\mu^\frac{1}{2p}$.
Let $Id$ be the  identity operator from $H(\Waw)$  to $L^2(\BB,\mu)$.
By Banach-Steinhaus's Theorem, we have
$$\lim_{k\to\infty}\|Id-Id_k\|_{H(\Waw)\to L^2(\BB,d\mu)}=0,\,\,\mbox{and}\,\,\| Id\|_{H(\Waw)\to L^2(\BB,\mu)}\lesssim M_\mu^\frac{1}{2p}.$$
Then,
$$\lim_{k\to\infty}\|(Id)^*-(Id_k)^*\|_{ L^2(\BB,d\mu)\to H(\Waw)}=0.$$
Thus,
$$\lim_{k\to\infty}\|(Id)^*Id-(Id_k)^*Id_k\|_{H(\Waw)\to H(\Waw)}=0.$$
Similar to get (\ref{0626-1}), for any $f,g\in H(\Waw)$ we have
\begin{align}\label{0626-2}
\langle \HT_\mu f,g \rangle_{H(\Waw)}
&=\int_\BB f(w)\ol{g(w)}d\mu(w).
\end{align}
Therefore,
$$\lim_{k\to\infty}\|\HT_\mu-\HT_{\mu_k}\|_{H(\Waw)\to H(\Waw)}
=\lim_{k\to\infty}\|(Id)^* Id-(Id_k)^*Id_k\|_{H(\Waw)\to H(\Waw)}=0,$$
 and
$$\|\HT_\mu\|_{H(\Waw)\to H(\Waw)}\leq \|(Id)^* Id\|_{H(\Waw)\to L^2(\BB,\mu)}\lesssim M_\mu^\frac{1}{p}.$$
Since $\HT_{\mu_k}(k=1,2,\cdots)$ are compact operators on $H(\Waw)$,  $\HT_\mu$ is compact on $H(\Waw)$.

If $T$ is a bounded operator on $H(\Waw)$, then
\begin{align*}
\|\HT_\mu-T\|_{H(\Waw)\to H(\Waw)}
&=\lim_{k\to\infty}\|\HT_{\mu_k}-T\|_{H(\Waw)\to H(\Waw)}.
\end{align*}
So, $\lambda_j(\HT_\mu)=\lim_{k\to\infty}\lambda_j(\HT_{\mu_k}).$  Fatou's Lemma deduces
\begin{align*}
|\HT_\mu|_p^p
&=\sum_{j=0}^\infty (\lambda_j(\HT_\mu))^p=\sum_{j=0}^\infty \lim_{k\to\infty}(\lambda_j(\HT_{\mu_k}))^p  \\
&\leq \liminf_{k\to\infty}\sum_{j=0}^\infty (\lambda_j(\HT_{\mu_k}))^p  \leq \limsup_{k\to\infty}|\HT_{\mu_k}|_p^p\lesssim M_\mu.
\end{align*}
We finish the proof of {\it (ii)}$\Rightarrow${\it (i)}.

  {\it (i)}$\Rightarrow${\it (ii)}.  Suppose  $\HT_\mu\in \mcS_p(H(\Waw))$. Let  $\{a_j\}$ be $s$ pseudo-hyperbolic separated and $\BB=\cup_{j=1}^\infty \Delta(a,t)$ for some   small enough  $t$ and $0<s<t<1$. Suppose there is no origin in $\{a_j\}$.

Let $\{e_j\}$ be an orthogonal  set in $H(\Waw)$ and $\mathscr{E}$ denote the subspace which is generated by $\{e_j\}$ and equipped with the norm of $H(\Waw)$.
Consider the linear operator $\widetilde{J}:\mathscr{E}\to H(\Waw)$, which is defined by
$$\widetilde{J}(e_j)(z)=\frac{1}{\|B_{a_j}^\Wav\|_{A_\Wav^2}}\int_0^1 \left(B_{a_j}^\Wav(tz)-B_{a_j}^\Wav(0)\right)\frac{dt}{t}.$$
For any $\{c_j\}\in l^2$ and $f\in H(\Waw)$, by Cauchy-Schwarz's inequality, we have
\begin{align*}
\left|\left\langle \widetilde{J}\left(\sum_j c_je_j\right),f  \right\rangle_{H(\Waw)}\right|
&=\left|\sum_j \frac{c_j}{\|B_{a_j}^\Wav\|_{A_\Wav^2}}\left\langle B_{a_j}^\Wav-B_{a_j}^\Wav(0),\Re f \right\rangle_{A_\Waw^2}\right|   \\
&=\left|\sum_j \frac{c_j}{\|B_{a_j}^\Wav\|_{A_\Wav^2}}\left\langle B_{a_j}^\Wav,\Re f \right\rangle_{A_\Waw^2}\right|
=\left|\sum_j \frac{c_j\ol{\Re f(a_j)}}{\|B_{a_j}^\Wav\|_{A_\Wav^2}}\right|   \\
&\leq \|\{c_j\}\|_{l^2} \left(\sum_{j}\frac{|\Re f(a_j)|^2}{\|B_{a_j}^\Wav\|_{A_\Wav^2}^2}\right)^\frac{1}{2}.
\end{align*}
By (\ref{0412-1}) and $\Wav\in\R$,  we have
$$
\|B_{a_j}^\Wav\|_{A_\Wav^2}^2 \approx \int_0^{|a_j|}\frac{1}{\widehat{\Wav}(t)(1-t)^{n+1}}dt
\approx \frac{1}{(1-|a_j|)^{n+1}\Wav(a_j)}.
$$
Therefore, by Lemma 2.24 in \cite{Zk2005} we get
\begin{align*}
\left|\left\langle \widetilde{J}\left(\sum_j c_je_j\right),f  \right\rangle_{H(\Waw)}\right|
&\lesssim \|\{c_j\}\|_{l^2} \left(\sum_{j}(1-|a_j|)^{n+1}\Wav(a_j)|\Re f(a_j)|^2\right)^\frac{1}{2}\\
&\approx  \|\{c_j\}\|_{l^2} \left(\sum_{j}\Wav(a_j)\int_{\Delta(a_j,\frac{s}{2})}|\Re f(\xi)|^2dV(\xi)\right)^\frac{1}{2}\\
&\approx \|\{c_j\}\|_{l^2} \left(\sum_{j}\int_{\Delta(a_j,\frac{s}{2})}|\Re f(\xi)|^2\Wav(\xi)dV(\xi)\right)^\frac{1}{2}\\
&\leq \|\{c_j\}\|_{l^2} \|f\|_{H(\Waw)}.
\end{align*}
So,  $\widetilde{J}:\mathscr{E}\to H(\Waw)$ is bounded.

Let $P$ be the orthogonal projection operator from $H(\Waw)$  to $\mathscr{E}$ and $J=\widetilde{J} P$.
Then $J$ is bounded on $H(\Waw)$.
By \cite[p. 27]{zhu}, $\mathcal{S}_p(H(\Waw))$ is a two-sided ideal in the space of bounded linear operators on $H(\Waw)$.
So, $J^*\HT_\mu J\in\mathcal{S}_p(H(\Waw))$ and $|J^*\HT_\mu J|_p\lesssim |\HT_\mu|_p.$
Theorem 1.27 in \cite{zhu} deduce that
\begin{align*}
\sum_j |\langle  (\HT_\mu J)e_j,Je_j\rangle_{H(\Waw)}|^p=\sum_j |\langle  (J^*\HT_\mu J)e_j,e_j\rangle_{H(\Waw)}|^p<|\HT_\mu|_p^p.
\end{align*}
By (\ref{0626-2}), we get $\sum\limits_{j}\|Je_j\|_{L^2(\BB,d\mu)}^2<|\HT_\mu|_p^p.$ By (\ref{0412-1}) and Lemma \ref{0627-1}, we have
\begin{align*}
\|Je_j\|_{L^2(\BB,d\mu)}^{2p}
&=\|\widetilde{J}e_j\|_{L^2(\BB,d\mu)}^{2p}= \frac{1}{\|B_{a_j}^\Waw\|_{A_\Waw^2}^{2p}}\int_\BB  \left|\int_0^1 \left(B_{a_j}^\Wav(tz)-B_{a_j}^\Wav(0)\right)\frac{dt}{t}\right|^{2p}d\mu(z) \\
&\gtrsim {(1-|a_j|)^{(n+1)p}\Waw(a_j)^p}\int_{\Delta(a_i,t)}  \left|\int_0^1
\left(B_{a_j}^\Wav(tz)-B_{a_j}^\Wav(0)\right)\frac{dt}{t}\right|^{2p}d\mu(z) \\
&\gtrsim \left(\frac{\mu(\Delta(a_j,t))}{(1-|a_j|)^{n-1}\Waw(a_j)}\right)^p.
\end{align*}
If $E,F\subset\BB$, let $\chi(E,F)=1$ when $E\cap F\neq \O$ and  $\chi(E,F)=0$ otherwise.
By  Lemma \ref{0618-1}, we have
\begin{align}
&\sum_{R_{k,j}\in\Upsilon}\left(\frac{\mu(R_{k,j})}{(1-|c_{k,j}|)^{-\alpha+n-1}\om^*(c_{k,j})}\right)^p
 \lesssim \sum_{R_{k,j}\in\Upsilon}\left(\sum_{i}\frac{\mu(\Delta(a_i,t))\chi(R_{k,j},\Delta(a_i,t))}{(1-|a_i|)^{n-1}\Waw(a_i)}\right)^p  \nonumber\\
&\approx  \sum_{i}\sum_{R_{k,j}\in\Upsilon}\left(\frac{\mu(\Delta(a_i,t))\chi(R_{k,j},\Delta(a_i,t))}{(1-|a_i|)^{n-1}\Waw(a_i)}\right)^p  \approx  \sum_{i}\left(\sum_{R_{k,j}\in\Upsilon}\frac{\mu(\Delta(a_i,t))\chi(R_{k,j},\Delta(a_i,t))}{(1-|a_i|)^{n-1}\Waw(a_i)}\right)^p  \nonumber\\
&\lesssim   \sum_{i}\left(\frac{\mu(\Delta(a_i,t))}{(1-|a_i|)^{n-1}\Waw(a_i)}\right)^p  \lesssim \sum_{i}\|\widetilde{J}e_j\|_{L^2(\BB,d\mu)}^{2p} \lesssim |\HT_\mu|_p^p.  \nonumber
\end{align}
So, {\it (ii)} holds.

({\it ii})$\Rightarrow$ ({\it iii}). Let $\{a_i\}_{i=1}^\infty$ be a $r$-lattice.
By Lemma \ref{0618-1}, we get
\begin{align*}
\|\widehat{\mu_{r,\alpha}}\|_{L^p_\lambda}^p
&\leq \sum_{i=1}^\infty \int_{D(a_i,5r)}\left(\frac{\mu(D(z,r))}{(1-|z|)^{-\alpha+n-1}\om^*(z)}\right)^p d\lambda(z)
\lesssim \sum_{i=1}^\infty\left(\frac{\mu(D(a_i,6r))}{(1-|a_i|)^{-\alpha+n-1}\om^*(a_i)}\right)^p  \\
&\lesssim \sum_{i=1}^\infty\sum_{R_{k,j}\in\Upsilon}  \left(\frac{\mu(R_{k,j})}{(1-|c_{k,j}|)^{-\alpha+n-1}\om^*(c_{k,j})}\right)^p\chi(R_{k,j},D(a_i,6r))\\
&\lesssim \sum_{R_{k,j}\in\Upsilon}  \left(\frac{\mu(R_{k,j})}{(1-|c_{k,j}|)^{-\alpha+n-1}\om^*(c_{k,j})}\right)^p,
\end{align*}
as desired.

 {\it (iii)} $\Rightarrow$  {\it (ii)}. Let $\{a_i\}$ be a $\frac{r}{6}$-lattice. For any $z\in D(a_i,\frac{r}{60})$, we have $D(a_i,\frac{5r}{6})\subset D(z,r)$.
By Lemma \ref{0618-1}, we obtain
\begin{align*}
\|\widehat{\mu_{r,\alpha}}\|_{L^p_\lambda}^p
&\geq \sum_{i=1}^\infty \int_{D(a_i,\frac{r}{60})}\left(\frac{\mu(D(z,r))}{(1-|z|)^{-\alpha+n-1}\om^*(z)}\right)^p d\lambda(z)
\geq \sum_{i=1}^\infty\left(\frac{\mu(D(a_i,\frac{5r}{6}))}{(1-|a_i|)^{-\alpha+n-1}\om^*(a_i)}\right)^p  \\
&\approx \sum_{i=1}^\infty\sum_{R_{k,j}\in\Upsilon}  \left(\frac{\mu(D(a_i,\frac{5r}{6}))}{(1-|a_i|)^{-\alpha+n-1}\om^*(a_i)}\right)^p\chi(R_{k,j},D(a_i,\frac{5r}{6}))\\
&\approx \sum_{R_{k,j}\in\Upsilon}  \left(\sum_{i=1}^\infty\frac{\mu(D(a_i,\frac{5r}{6}))}{(1-|a_i|)^{-\alpha+n-1}\om^*(a_i)}\chi(R_{k,j},D(a_i,\frac{5r}{6}))\right)^p\\
&\gtrsim \sum_{R_{k,j}\in\Upsilon}  \left(\frac{\mu(R_{k,j})}{(1-|c_{k,j}|)^{-\alpha+n-1}\om^*(c_{k,j})}\right)^p.
\end{align*}
 The proof is complete.
\end{proof}

\begin{Lemma}\label{0916-1}
 Suppose $\om\in\hD$, $-\infty<\alpha<1$, $r\in(0,1)$, $s$ is a large enough integer, and $\{e_k\}_{k=1}^\infty$ be a orthonormal basis of a Hilbert space $H(\Waw)$.
If $\{b_k\}_{k=1}^\infty\subset\BB\backslash\{0\}$ is a $r$-lattice ordered by increasing module, and
$$Je_k=\left(\frac{(1-|b_k|^2)^{s}}{\Waw(b_k)(1-\langle z,b_k\rangle)^{n-1+s}}\right)^\frac{1}{2},\,\,k=1,2,\cdots.$$
Then $J:H(\Waw)\to H(\Waw)$ is bounded and  onto.
\end{Lemma}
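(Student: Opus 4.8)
The plan is to recognise the statement as the assertion that $\{Je_k\}$ is a frame for $H(\Waw)$. Indeed $H(\Waw)$ is a Hilbert space and $J\bigl(\sum_k c_ke_k\bigr)=\sum_k c_kJe_k$, so $J$ is bounded if and only if $\sum_k|\langle g,Je_k\rangle_{H(\Waw)}|^2\lesssim\|g\|_{H(\Waw)}^2$ for all $g$ (upper frame bound), and $J$ is onto if and only if $J^*$ is bounded below, i.e. $\sum_k|\langle g,Je_k\rangle_{H(\Waw)}|^2\gtrsim\|g\|_{H(\Waw)}^2$ for all $g$ (lower frame bound). A preliminary normalisation is useful: writing $t=n-1+s$ one has $Je_k(z)=C_{b_k}(1-\langle z,b_k\rangle)^{-t/2}$ with $C_{b_k}^2=(1-|b_k|^2)^s/\Waw(b_k)$, and I would first show $\|Je_k\|_{H(\Waw)}\approx1$ uniformly in $k$. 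This comes from the Taylor coefficient description of the $H(\Waw)$-norm in Lemma \ref{0607-7}, the elementary identity $\sum_{|m|=k}|b|^{2m}/m!=|b|^{2k}/k!$, Stirling's formula, and the facts $\om^*\in\R$ (Lemma \ref{1210-3}(i)) and $\om^{n*}\approx\om^*$ near the boundary: these give $\|(1-\langle\cdot,b\rangle)^{-t/2}\|_{H(\Waw)}^2\approx(1-|b|)^{-(s+\alpha)}\om^*(b)$ and $\Waw(b)\approx(1-|b|)^{-\alpha}\om^*(b)$, whence $\|Je_k\|_{H(\Waw)}^2\approx1$. In particular $Je_k$ behaves like the normalised reproducing kernel $K_{b_k}^\Waw/\|K_{b_k}^\Waw\|_{H(\Waw)}$ of Lemma \ref{0607-1}.

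Next I would compute the pairing. From the same moment description, $\langle g,Je_k\rangle_{H(\Waw)}=C_{b_k}\,\Phi_g(b_k)$, where $\Phi_g\in H(\BB)$ has $m$-th Taylor coefficient equal to $\bigl(w_m\,\Gamma(t/2+|m|)/(\Gamma(t/2)\,m!)\bigr)\widehat g_m$, with $w_m$ the weight of Lemma \ref{0607-7}; a Stirling estimate shows this multiplier is comparable to $(|m|+1)^{\alpha+(1-n+s)/2}\om^*(1-\tfrac1{2|m|+1})$. Working on the coefficient side, one then checks that $g\mapsto\Phi_g$ is a bounded invertible map of $H(\Waw)$ onto the weighted Bergman space $A_\nu^2$, where $\nu\in\R$ is the (regular) weight determined by $\widehat\nu(t)\approx(1-t)^{s+\alpha-n}/\om^*(t)$ --- a legitimate weight once $s$ is large, since $\om^*$ cannot decay faster than a power --- with $\|\Phi_g\|_{A_\nu^2}\approx\|g\|_{H(\Waw)}$; moreover $C_{b_k}^2\approx\nu(\Delta(b_k,r))\approx(1-|b_k|)^{n+1}\nu(b_k)$ by Proposition \ref{0421-1}. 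Hence $\sum_k|\langle g,Je_k\rangle_{H(\Waw)}|^2=\sum_k\nu(\Delta(b_k,r))\,|\Phi_g(b_k)|^2$ is exactly the lattice sampling sum of $\Phi_g$ in $A_\nu^2$ attached to the $r$-lattice $\{b_k\}$.

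With this reduction the two frame inequalities amount to saying that $\{b_k\}$ is a sampling sequence for $A_\nu^2$, i.e. $\Phi\mapsto\{\nu(\Delta(b_k,r))^{1/2}\Phi(b_k)\}$ is bounded above and below from $A_\nu^2$ to $l^2$. The upper bound is the Bessel/Carleson estimate for a separated sequence and follows from the sub-mean-value property of $|\Phi|^2$ (Lemma 2.24 in \cite{Zk2005}), just as in the proof of Theorem \ref{0828-1}. The lower bound is the Coifman--Rochberg estimate: discretise the Bergman reproducing formula $\Phi(z)=c_\beta\int_\BB\Phi(w)(1-|w|^2)^\beta(1-\langle z,w\rangle)^{-n-1-\beta}\,dV(w)$ (valid with $n+1+\beta=t/2$ large) over the Bergman balls $D(b_k,r)$, using $1-|w|^2\approx1-|b_k|^2$ and $(1-\langle z,w\rangle)^{-t/2}\approx(1-\langle z,b_k\rangle)^{-t/2}$ on $D(b_k,r)$, together with Proposition \ref{0421-1} and Lemma \ref{0618-1} to control volumes and overlaps; the resulting error operator has norm $<1$ when the lattice is fine, which yields the lower bound. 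Both estimates are in fact already contained in the Carleson-measure and atomic-decomposition theory for Bergman spaces induced by doubling (here regular) weights from \cite{DjLsLxSy2019arxiv} (cf. Theorem A) and its classical predecessors \cite{Zk2005,PjaRj2014book}, so after the reduction I would simply invoke those.

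The reason the lemma is not immediate from known results is that $H(\Waw)$ carries a Dirichlet-type norm built from the radial derivative, and for $0\le\alpha<1$ it is \emph{not} equivalent to any weighted Bergman norm (Lemma \ref{0607-8} gives $H(\Waw)\simeq A_{\om^*_{-\alpha-2}}^2$ only when $\alpha<0$). Thus the kernel estimates above must be carried out either entirely on the Taylor-coefficient side (the route sketched, via Lemma \ref{0607-7}), or after a reduction to the Bergman case. I would in fact prefer the latter as the cleanest presentation: fix $\varepsilon>0$ with $\alpha-2\varepsilon<0$ and let $G$ be the multiplier $\sum a_mz^m\mapsto\sum(|m|+1)^{\varepsilon}a_mz^m$, which by Lemma \ref{0607-7} is bounded and invertible from $H(\Waw)=H(W_\alpha^\om)$ onto $H(W_{\alpha-2\varepsilon}^\om)\simeq A_{\om^*_{-(\alpha-2\varepsilon)-2}}^2$ (a regular-weight Bergman space, by Lemma \ref{0607-8}); then $J$ is bounded and onto if and only if $GJ$ is, while $GJe_k=G(Je_k)$ is, up to uniform constants, a power kernel $(1-\langle\cdot,b_k\rangle)^{-t/2-\varepsilon}$ normalised in $A_{\om^*_{-(\alpha-2\varepsilon)-2}}^2$ and comparable there to the normalised reproducing kernel at $b_k$, so the classical Coifman--Rochberg frame theorem applies verbatim. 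Either way, the delicate point is the \emph{lower} (Coifman--Rochberg) bound, where the smallness of the discretisation error forces the lattice to be sufficiently fine, respectively $s$ to be sufficiently large.
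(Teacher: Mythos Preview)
Your Route~2 strategy---transfer $H(\Waw)$ to a genuine regular-weight Bergman space by a coefficient isomorphism and then invoke Coifman--Rochberg atomic decomposition---is exactly the paper's. The difference is the transfer operator. The paper does not use your multiplier $G:\sum a_mz^m\mapsto\sum(|m|+1)^{\varepsilon}a_mz^m$; it uses the Zhu fractional derivative $R^{\beta,1}$ with $\beta=\tfrac{s-n-3}{2}$, together with the norm equivalence $\|f\|_{H(\Waw)}\approx\|R^{\beta,1}f\|_{A_\Psi^2}$, where $\Psi\in\R$ agrees with $\Waw$ on $[\tfrac12,1)$. The point is Proposition~1.14 in \cite{Zk2005}: $R^{\beta,1}$ sends $(1-\langle z,b\rangle)^{-(n+1+\beta)}$ \emph{exactly} to $(1-\langle z,b\rangle)^{-(n+2+\beta)}$, so $R^{\beta,1}(Je_k)$ is literally the normalized atom $(1-|b_k|^2)^{s/2}\Psi(b_k)^{-1/2}(1-\langle z,b_k\rangle)^{-(n+1+s)/2}$ in $A_\Psi^2$, and the atomic decomposition for $\mu$-Bergman spaces (the paper cites \cite{LxHz2010amsc,ZxXlFhLj2014amsc}) applies verbatim to give both surjectivity and boundedness of $J$.

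Your $G$ lacks this exactness: $G\bigl((1-\langle\cdot,b\rangle)^{-t/2}\bigr)$ has Taylor coefficients \emph{comparable} to those of $(1-\langle\cdot,b\rangle)^{-t/2-\varepsilon}$, but comparable coefficients do not give comparable functions (consider $\sum z^k$ versus $\sum(-1)^kz^k$), so ``$GJe_k$ is, up to uniform constants, a power kernel'' is not literally true and Coifman--Rochberg does not apply ``verbatim''. The gap is repairable---the ratio of the two coefficient sequences is a radial multiplier with symbol $\asymp1$, hence bounded and invertible on any $A_\nu^2$ since monomials are orthogonal there, and frames transfer under isomorphisms---but it is cleaner simply to replace $G$ by $R^{\beta,1}$, which is precisely the paper's move. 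Your Route~1 (pairing computation reducing to a lattice sampling problem in an auxiliary $A_\nu^2$) is a legitimate dual viewpoint; just note that the weight $\nu$ you construct depends on $s$, so what makes the lower bound go through for an \emph{arbitrary} $r$-lattice is again the large kernel exponent (i.e., atomic decomposition with high powers), not a genuine sampling theorem whose density threshold would be insensitive to $s$.
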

\begin{proof}
Firstly, we prove $J$ is surjective.
Let
$$
 \Psi(t)=\left\{
\begin{array}{cc}
  \Waw(\frac{1}{2}), & t\in [0,\frac{1}{2}],  \\
  \Waw(t), & t\in[\frac{1}{2},1).
\end{array}
\right.
$$
By (\ref{0605-1}), we have
\begin{itemize}
  \item $\Psi\in \R$;\,\, $\Psi(b_k)\approx \Waw(b_k)$ for $k=1,2,\cdots$;
  \item $\|f\|_{H(\Waw)}^2\approx |f(0)|^2+\int_\BB |\Re f(z)|^2\Psi(z) dV(z)$.
\end{itemize}
Let the operators $R^{\beta,t}$ and $R_{\beta,t}$ be defined as (1.33) and (1.34) in \cite{Zk2005}.
Assume the homogeneous expansion of $f$ is $f(z)=\sum\limits_{k=0}^\infty f_k(z)$. Then, by Strling's formula, for any given $\beta>0$, we have
\begin{align*}
\|f\|_{H(\Waw)}^2
&\approx  |f(0)|^2+\int_\BB |\Re f(z)|^2\Psi(z) dV(z)\\
&= |f(0)|^2+\sum_{k=1}^\infty k^2\int_\BB |f_k(z)|^2 \Psi(z)dV(z)
\approx \|R^{\beta,1}f\|_{A_{\Psi}^2}^2.
\end{align*}
Here,
$$\|f\|_{A_\Psi^2}^2=\int_\BB |f(z)|^2 \Psi(z)dV(z).$$
By Proposition 2.4 in \cite{LxHz2010amsc} (also see Theorem 3.2 in \cite{ZxXlFhLj2014amsc}), $R^{\beta,1}f\in A_{\Psi}^2$ if and only if
there exists $\{c_k\}_{k=1}^\infty\in l^2$ such that $\|R^{\beta,1}f\|_{A_{\Psi}^2}\lesssim \|\{c_k\}\|_{l^2}$ and
$$R^{\beta,1}f(z)=\sum_{k=1}^\infty c_k\left(\frac{(1-|b_k|^2)^s}{\Psi(b_k)(1-\langle z,b_k\rangle)^{n+1+s}}\right)^\frac{1}{2}.$$
Let $\beta=-\frac{n+3}{2}+\frac{s}{2}$. By Proposition 1.14 in \cite{Zk2005}, we have
\begin{align*}
f(z)
&=R_{\beta,1}\left(\sum_{k=1}^\infty c_k\left(\frac{(1-|b_k|^2)^s}{\Psi(b_k)(1-\langle z,b_k\rangle)^{n+1+s}}\right)^\frac{1}{2}\right)  \\
&=\sum_{k=1}^\infty c_k\left(\frac{(1-|b_k|^2)^s}{\Psi(b_k)(1-\langle z,b_k\rangle)^{n-1+s}}\right)^\frac{1}{2}\\
&=\sum_{k=1}^\infty \frac{c_k\sqrt{\Waw(b_k)}}{\sqrt{\Psi(b_k)}}\left(\frac{(1-|b_k|^2)^s}{\Waw(b_k)(1-\langle z,b_k\rangle)^{n-1+s}}\right)^\frac{1}{2}.
\end{align*}
Let $d_k=\frac{c_k\sqrt{\Waw(b_k)}}{\sqrt{\Psi(b_k)}}$. Then we have $\|\{d_k\}\|_{l^2}\approx \|\{c_k\}\|_{l^2}$ and $f=J(\sum_{k=1}^\infty d_ke_k)$. So, $J$ is surjective.

Next, we prove $J$ is bounded on $H(\Waw)$. Let $g(z)=\sum\limits_{k=1}^\infty d_ke_k(z)$. Then $\|g\|_{H(\Waw)}=\|\{d_k\}\|_{l^2}$.
Let $\beta=-\frac{n+3}{2}+\frac{s}{2}$. By Proposition 1.14 in \cite{Zk2005} and Proposition 2.4 in \cite{LxHz2010amsc} (also see Theorem 3.2 in \cite{ZxXlFhLj2014amsc}), we have

\begin{align*}
\|Jg\|_{H(\Waw)}
&= \left\|  \sum_{k=1}^\infty \frac{d_k\sqrt{\Psi(b_k)}}{\sqrt{\Waw(b_k)}}\left(\frac{(1-|b_k|^2)^s}{\Psi(b_k)(1-\langle z,b_k\rangle)^{n-1+s}}\right)^\frac{1}{2}  \right\|_{H(\Waw)}   \\
&\approx \left\|  \sum_{k=1}^\infty \frac{d_k\sqrt{\Psi(b_k)}}{\sqrt{\Waw(b_k)}}R^{\beta,1}\left(\frac{(1-|b_k|^2)^s}{\Psi(b_k)(1-\langle z,b_k\rangle)^{n-1+s}}\right)^\frac{1}{2}  \right\|_{A_{\Psi}^2}\\
&= \left\|  \sum_{k=1}^\infty \frac{d_k\sqrt{\Psi(b_k)}}{\sqrt{\Waw(b_k)}}\left(\frac{(1-|b_k|^2)^s}{\Psi(b_k)(1-\langle z,b_k\rangle)^{n+1+s}}\right)^\frac{1}{2}  \right\|_{A_{\Psi}^2}\\
&\lesssim \left\|\left\{\frac{d_k\sqrt{\Psi(b_k)}}{\sqrt{\Waw(b_k)}}\right\}\right\|_{l^2}
\approx \|\{d_k\}\|_{l^2}=\|f\|_{H(\Waw)}.
\end{align*}
So, $J:H(\Waw)\to H(\Waw)$ is bounded. The proof is complete.
\end{proof}

\begin{Lemma}\label{0916-2} Suppose $\varphi$ is regular and continuous, $0<p<\infty$, $s$ is large enough. If there exist $-1<a<\infty$ and $\delta\in(0,1)$ such that
\begin{align*}
n-1+a>0,
\,\,\mbox{ and }\,\,\frac{{\varphi}(t)}{(1-t)^a}\searrow 0,\,\,\mbox{ when }\,\,\delta\leq t<1 ,
\end{align*}
then
\begin{align}\label{0918-1}
I(z)=\int_\BB \frac{(1-|w|^2)^{sp-(n+1)}dV(w)}{\varphi(w)^p|1-\langle z,w\rangle|^{(n-1+s)p}}\lesssim \frac{1}{\varphi(z)^p(1-|z|^2)^{p(n-1)}},
\end{align}
and for any given $\varepsilon>0$, there exists $0<r<1$, such that for all $z\in\BB$,
\begin{align}\label{0917-1}
 I(z,r)=\int_{\BB\backslash \Delta(z,r)}\frac{(1-|w|^2)^{sp-(n+1)}dV(w)}{\varphi(w)^p|1-\langle z,w\rangle|^{(n+1+s)p}}\leq \frac{\varepsilon}{\varphi(z)^p(1-|z|^2)^{p(n-1)}}.
 \end{align}
\end{Lemma}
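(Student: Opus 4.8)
The plan is to reduce both estimates to standard integral estimates for the kernel $(1-\langle z,w\rangle)^{-c}$ from \cite[Theorem~1.12]{Zk2005}, after controlling the weight $\varphi(w)$ on suitable regions. First I would prove \eqref{0918-1}. Fix $z\in\BB$ and split the integral $I(z)$ into the region $\Delta(z,\tfrac12)$ and its complement. On $\Delta(z,\tfrac12)$ we have $1-|w|\approx 1-|z|$ and, since $\varphi$ is continuous and regular, $\varphi(w)\approx\varphi(z)$ (using Lemma~\ref{1210-3} or the argument in Proposition~\ref{0421-1}); thus on this piece $I$ is bounded by a constant multiple of $\varphi(z)^{-p}(1-|z|^2)^{sp-(n+1)}\int_{\Delta(z,1/2)}|1-\langle z,w\rangle|^{-(n-1+s)p}\,dV(w)$, and a direct application of \cite[Theorem~1.12]{Zk2005} (with the exponent $(n-1+s)p-(n+1)$, which is positive for $s$ large) gives the bound $\varphi(z)^{-p}(1-|z|^2)^{p(n-1)}$, as desired. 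On the complement $\BB\setminus\Delta(z,\tfrac12)$ one has $|1-\langle z,w\rangle|\gtrsim(1-|z|^2)$, so we may extract $(1-|z|^2)^{-\theta}$ for a small $\theta>0$ from the kernel, say with $\theta=p(n-1)$ absorbed from the extra powers of $|1-\langle z,w\rangle|$, and are left with $\int_\BB (1-|w|^2)^{sp-(n+1)}\varphi(w)^{-p}|1-\langle z,w\rangle|^{-(n-1+s)p+p(n-1)}\,dV(w)$; here we use the hypothesis $\varphi(t)(1-t)^{-a}\searrow0$ to write $\varphi(w)^{-p}\lesssim (1-|w|^2)^{-ap}\varphi(z')^{-p}(1-|z'|^2)^{ap}$ for a reference point, or more simply bound $\varphi(w)^{-p}\lesssim C(1-|w|^2)^{-ap}$ with the constant from $\delta\le|w|<1$ and handle $|w|\le\delta$ trivially (there the integrand is bounded). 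The resulting integral $\int_\BB(1-|w|^2)^{sp-(n+1)-ap}|1-\langle z,w\rangle|^{-(sp-(n+1))}\,dV(w)$ is again estimated by \cite[Theorem~1.12]{Zk2005}, using $n-1+a>0$ to keep the relevant exponent in the admissible range; tracking the powers of $(1-|z|^2)$ produces exactly $\varphi(z)^{-p}(1-|z|^2)^{p(n-1)}$ after reinstating the regularity relation $\varphi(z)^{-p}\approx(1-|z|^2)^{-ap}\cdot(\text{bounded})$ is \emph{not} true pointwise, so instead I keep the $(1-|z|^2)^{ap}$ factor and convert back via $\varphi$ being regular: $(1-|z|^2)^{ap}\cdot(1-|z|^2)^{-ap}\varphi(z)^{-p}=\varphi(z)^{-p}$ is wrong — the correct move is to never introduce $\varphi(z)$ artificially but rather to note $\varphi(w)^{-p}\le C\varphi(z)^{-p}(1-|w|^2)^{-ap}(1-|z|^2)^{ap}$ is \emph{false}; the monotonicity only gives a one-sided comparison for $|w|\ge|z|$. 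So the cleaner route is: use $\varphi(t)(1-t)^{-b}\nearrow\infty$ for the lower part of the $w$-integral (where $|w|\le|z|$) and $\varphi(t)(1-t)^{-a}\searrow0$ for the upper part (where $|w|\ge|z|$), exactly as in the proofs of Lemma~\ref{0606-4} and Lemma~\ref{0607-1}; each subregion then reduces to a one-variable weighted integral times \cite[Theorem~1.12]{Zk2005}.

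For \eqref{0917-1}, the point is that now the kernel exponent is $(n+1+s)p$, which is strictly larger, so the integral over all of $\BB$ converges with room to spare, and the contribution of $\BB\setminus\Delta(z,r)$ must be made small in $r$. I would argue as follows: by \eqref{0918-1} applied with $s$ replaced by $s+2$ (or directly), the full integral $\int_\BB(1-|w|^2)^{sp-(n+1)}\varphi(w)^{-p}|1-\langle z,w\rangle|^{-(n+1+s)p}\,dV(w)$ is $\lesssim\varphi(z)^{-p}(1-|z|^2)^{-(n+1)p}$ — wait, this needs the analogue of \eqref{0918-1} with exponent $n+1+s$, which gives $(1-|z|^2)^{p(n+1)}$ in the numerator of the bound, i.e. the full integral is $\lesssim\varphi(z)^{-p}(1-|z|^2)^{-(n+1)p}$; that is too big by a factor $(1-|z|^2)^{-2np}$ compared to the target $\varphi(z)^{-p}(1-|z|^2)^{-p(n-1)}$, so convergence of the \emph{tail} alone is not enough and I genuinely need the smallness in $r$. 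The mechanism: on $\BB\setminus\Delta(z,r)$ we have $|1-\langle z,w\rangle|\gtrsim(1-|z|^2)$ \emph{and} $|1-\langle z,w\rangle|\gtrsim$ the "size of $\Delta(z,r)$", more precisely a Whitney decomposition of $\BB\setminus\Delta(z,r)$ into annular pieces $A_k=\{w:2^k\rho_0\le|1-\langle z,w\rangle|<2^{k+1}\rho_0\}$ with $\rho_0\approx(1-|z|^2)$ and $k\ge k_r$ where $2^{k_r}\rho_0\approx$ (a quantity $\to\infty$ as $r\to1$, or rather $2^{-k_r}\to0$ as $r\to1$). On each $A_k$ one estimates the integral using $|1-\langle z,w\rangle|\approx 2^k\rho_0$ and \cite[Theorem~1.12]{Zk2005} on the sub-ball, getting a geometric series in $k$ whose tail from $k_r$ onward is $\lesssim 2^{-\eta k_r}\varphi(z)^{-p}(1-|z|^2)^{-p(n-1)}$ for some $\eta>0$ coming from the gap between $(n+1+s)p$ and the critical exponent; since $2^{-k_r}\to0$ as $r\to1$, choosing $r$ close enough to $1$ makes this $<\varepsilon\varphi(z)^{-p}(1-|z|^2)^{-p(n-1)}$ uniformly in $z$. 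The weight $\varphi(w)$ is handled exactly as in the proof of \eqref{0918-1} — split according to $|w|\lessgtr|z|$ and use the two-sided monotonicity hypotheses.

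The main obstacle I expect is the bookkeeping of the powers of $(1-|z|^2)$ and $(1-|w|^2)$ when the weight $\varphi$ is only \emph{almost} a power: one cannot replace $\varphi(w)$ by $\varphi(z)\,(\text{ratio of }(1-|w|)\text{ to }(1-|z|))^a$ pointwise, only the one-sided monotone comparisons hold, so the $w$-integral must always be split at $|w|=|z|$ and the two halves treated with the increasing resp.\ decreasing normalized weight, in the style of Lemmas~\ref{0606-2}--\ref{0606-4}. Once that split is in place, each half is a genuine power-weight integral and \cite[Theorem~1.12]{Zk2005} applies verbatim; the uniformity in $z$ of the smallness in \eqref{0917-1} then falls out of the Whitney/annular decomposition because the constant in \cite[Theorem~1.12]{Zk2005} is absolute. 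No new ideas beyond \cite{Zk2005} and the weight manipulations already used repeatedly in Section~4 are needed.
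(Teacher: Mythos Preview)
Your plan for \eqref{0918-1} --- after the initial detours --- is exactly the paper's: split the integral at $|w|=|z|$, use on $\{|w|\le|z|\}$ that $\varphi(t)(1-t)^{-a}$ is decreasing and on $\{|w|>|z|\}$ that $\varphi(t)(1-t)^{-b}$ is increasing for some $b>a$ (Lemma~\ref{0507-1}{\it (iii)}), reduce each half to a pure power-weight integral, and apply \cite[Theorem~1.12]{Zk2005}. The hypothesis $n-1+a>0$ is precisely what makes the relevant exponent in \cite[Theorem~1.12]{Zk2005} positive.

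For \eqref{0917-1} there is a real gap. Your mechanism is that $\BB\setminus\Delta(z,r)\subset\bigcup_{k\ge k_r}A_k$ with $A_k=\{w:2^k\rho_0\le|1-\langle z,w\rangle|<2^{k+1}\rho_0\}$, $\rho_0\approx1-|z|^2$, and $k_r\to\infty$ as $r\to1$. This is false: $\BB\setminus\Delta(z,r)$ meets $A_0$ for every $r<1$. Take $z=(t,0,\dots,0)$ and $w=(s,0,\dots,0)$ with $t<s<1$; then $|1-\langle z,w\rangle|=1-ts\approx1-t$ while $\rho(z,w)=\frac{s-t}{1-ts}\to1$ as $s\to1^-$. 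So no lower bound on $|1-\langle z,w\rangle|$ improves with $r$, and there is no geometric tail in $k$ to sum. The smallness in $r$ actually comes from $(1-|w|^2)$ being forced small relative to $|1-\langle z,w\rangle|$ on $\BB\setminus\Delta(z,r)$, via the identity $1-\rho(z,w)^2=\frac{(1-|z|^2)(1-|w|^2)}{|1-\langle z,w\rangle|^2}\le1-r^2$. The paper captures this cleanly by the M\"obius change $\eta=\varphi_z(w)$, which sends $\BB\setminus\Delta(z,r)$ to the honest shrinking annulus $\{r\le|\eta|<1\}$; the transformed integrand becomes $(1-|z|^2)^{c-\beta}(1-|\eta|^2)^{c-(n+1)}|1-\langle z,\eta\rangle|^{-(2c-\beta)}$, and since $c>n$ the factor $(1-|\eta|)^{c-(n+1)}$ is integrable at $1$, so the integral over $r\le|\eta|<1$ tends to $0$ uniformly in $z$ (the angular integral is handled by the usual power-series expansion of the kernel). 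This yields a weight-free auxiliary estimate (their \eqref{0917-2}); then one splits at $|w|=|z|$ exactly as for \eqref{0918-1} to insert $\varphi$. Your Whitney scheme could in principle be repaired by tracking the $(1-|w|^2)$-smallness on each $A_k$ separately, but as written the argument does not go through.
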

\begin{proof}By Lemma \ref{0507-1}, there exists $a<b<+\infty$  such that $\frac{{\varphi}(t)}{(1-t)^b} \nearrow\infty.$
Without loss of generality, let $\delta=0$.

Let
$$I(z)=\left(\int_{|w|\leq |z|}+\int_{|z|<|w|<1} \right)\frac{(1-|w|^2)^{sp-(n+1)}dV(w)}{\varphi(w)^p|1-\langle z,w\rangle|^{(n-1+s)p}} =I_1(z)+I_2(z).$$
When $n-1+a>0$, by Theorem 1.12 in \cite{Zk2005}, we have
$$
I_1(z)\lesssim \frac{(1-|z|)^{pa}}{\varphi(z)^p}\int_{|w|\leq |z|}\frac{(1-|w|^2)^{sp-(n+1)-pa}dV(w)}{|1-\langle z,w\rangle|^{(n-1+s)p}}
\lesssim \frac{1}{\varphi(z)^p(1-|z|)^{(n-1)p}},
$$
and
$$I_2(z)\leq \frac{(1-|z|)^{pb}}{\varphi(z)^p}\int_{|z|\leq |w|<1}\frac{(1-|w|^2)^{sp-(n+1)-pb}dV(w)}{|1-\langle z,w\rangle|^{(n-1+s)p}}
\lesssim \frac{1}{\varphi(z)^p(1-|z|)^{(n-1)p}}.$$
Hence, (\ref{0918-1}) holds.

In order to obtain (\ref{0917-1}), we prove that, if $n<c<\beta$, for any given $\varepsilon>0$, there exists $r\in(0,1)$, such that, for all $z\in\BB$,
\begin{align}\label{0917-2}
\int_{\BB\backslash\Delta(z,r)}\frac{(1-|w|^2)^{c-(n+1)}}{|1-\langle z,w\rangle|^\beta}dV(w)\lesssim \frac{\varepsilon}{(1-|z|^2)^{\beta-c}}.
\end{align}
Letting $\eta=\vp_z(w)$ and $|\eta|=t$, by Lemmas 1.2 and 1.3  in \cite{Zk2005}, we have
\begin{align*}
\int_{\BB\backslash\Delta(z,r)}\frac{(1-|w|^2)^{c-(n+1)}}{|1-\langle z,w\rangle|^\beta}dV(w)
&=\int_{r\leq |\eta|<1}\frac{(1-|\vp_z(\eta)|^2)^{c}}{|1-\langle \vp_z(0),\vp_z(\eta)\rangle|^\beta}\frac{dV(\eta)}{(1-|\eta|^2)^{n+1}}\\
&=\int_{r\leq |\eta|<1} \frac{(1-|z|^2)^{c-\beta}(1-|\eta|^2)^{c-(n+1)}}{|1-\langle z,\eta\rangle|^{2c-\beta}}dV(\eta)  \\
&\approx \frac{1}{(1-|z|^2)^{\beta-c}} \int_r^1 (1-t)^{c-(n+1)}dt\int_\SS\frac{d\sigma(\xi)}{|1-\langle tz,\xi\rangle|^{2c-\beta}}.
\end{align*}
So, when $2c-\beta\leq 0$, for any given $\varepsilon>0$, (\ref{0917-2}) holds for some $r\in(0,1)$.

When $2c-\beta>0$, let $\lambda=\frac{2c-\beta}{2}$.  By the proof of Theorem 1.12 in \cite{Zk2005}  and  Stirling's estimate, we get
\begin{align*}
\int_\SS\frac{d\sigma(\xi)}{|1-\langle tz,\xi\rangle|^{2c-\beta}}
&= \sum_{k=0}^\infty \left(\frac{\Gamma(k+\lambda)}{k!\Gamma(\lambda)}\right)^2\int_\SS |\langle tz,\xi \rangle|^{2k}d\sigma(\xi)  \\
&= \sum_{k=0}^\infty \left(\frac{\Gamma(k+\lambda)}{k!\Gamma(\lambda)}\right)^2 \frac{(n-1)!k!}{(n-1+k)!} |tz|^{2k}
\approx  \sum_{k=0}^\infty k^{2\lambda-n-1}|tz|^{2k}.
\end{align*}
Hence,
\begin{align*}
\int_r^1 (1-t)^{c-(n+1)}dt\int_\SS\frac{d\sigma(\xi)}{|1-\langle tz,\xi\rangle|^{2c-\beta}}
\approx \sum_{k=0}^\infty k^{2\lambda-n-1}|z|^{2k}\int_r^1 (1-t)^{c-(n+1)}t^{2k}dt.
\end{align*}
As $N\to \infty$,
\begin{align*}
\sum_{k=N+1}^\infty k^{2\lambda-n-1}|z|^{2k}\int_r^1 (1-t)^{c-(n+1)}t^{2k}dt
&\leq  \sum_{k=N+1}^\infty k^{2\lambda-n-1}\int_0^1 (1-t)^{c-(n+1)}t^{2k}dt\\
&\approx \sum_{k=N+1}^\infty k^{c-\beta-1}\to 0.
\end{align*}
Therefore, for any given $\varepsilon>0$, there exists $N_*\in\NN$, such that
\begin{align*}
\int_r^1 (1-t)^{c-(n+1)}dt\int_\SS\frac{d\sigma(\xi)}{|1-\langle tz,\xi\rangle|^{2c-\beta}}
\lesssim \sum_{k=0}^{N_*} k^{2\lambda-n-1}\int_r^1 (1-t)^{c-(n+1)}t^{2k}dt +\varepsilon.
\end{align*}
So, for any given $\varepsilon>0$, there exists $r\in(0,1)$, such that for all $z\in\BB$, (\ref{0917-2}) holds.

Let $\BB_{|z|}=\{w\in\BB:|w|<|z|\}$. By (\ref{0917-2}), for any given $\varepsilon>0$, there exists $r\in(0,1)$ such that
\begin{align*}
I(z,r)&=\left(\int_{(\BB\backslash \Delta(z,r))\cap \BB_{|z|}} + \int_{\BB\backslash (\Delta(z,r) \cup \BB_{|z|})}\right)
\frac{(1-|w|^2)^{sp-(n+1)}dV(w)}{\varphi(w)^p|1-\langle z,w\rangle|^{(n-1+s)p}}\\
&\lesssim  \frac{\varepsilon}{\varphi(z)^p(1-|z|^2)^{p(n-1)}}.
\end{align*}
The proof is complete.
\end{proof}

\begin{Theorem}\label{0919-1} Let $0< p<1$, $-\infty<\alpha<1$, $\om\in\hD$ and $\mu$ be a positive Borel measure on $\BB$.  Let
$$\widehat{\mu_{r,\alpha}}=\frac{\mu(D(z,r))}{(1-|z|)^{-\alpha+n-1}\om^*(z)},\,\,\,\,d\lambda(z)=\frac{dV(z)}{(1-|z|^2)^{n+1}}.$$
Assume that there exist $-1<a<\infty$ and $\delta\in(0,1)$ such that
\begin{align*}
n-1+a>0,
\,\,\mbox{ and }\,\,\frac{(1-t)^{-\alpha}\om^*(t)}{(1-t)^a}\searrow 0,\,\,\mbox{ when }\,\,\delta\leq t<1 .
\end{align*}
Then the following statements are equivalent.
\begin{enumerate}[(i)]
  \item $\HT_\mu\in \mcS_p(H(\Waw))$;
  \item $M_\mu=\sum\limits_{R_{k,j}\in\Upsilon}\left(\frac{\mu(R_{k,j})}{(1-|c_{k,j}|)^{-\alpha+n-1}\om^*(c_{k,j})}\right)^p<\infty$;
  \item $\widehat{\mu_{r,\alpha}}\in L^p(\BB,d\lambda)$ for some (equivalently, for all) $r>0$.
\end{enumerate}
Moreover, $|\HT_\mu|_p^p\approx M_\mu\approx\|\widehat{\mu_{r,\alpha}}\|_{L^p_\lambda}^p.$
\end{Theorem}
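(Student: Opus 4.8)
The plan is to establish the cycle of equivalences by first disposing of \textit{(ii)}$\Leftrightarrow$\textit{(iii)}, which is the purely geometric comparison between the lattice sum $M_\mu$ over the partition $\Upsilon=\{R_{k,j}\}$ and the integral $\|\widehat{\mu_{r,\alpha}}\|_{L^p_\lambda}^p$: this is word for word the argument for \textit{(ii)}$\Leftrightarrow$\textit{(iii)} in Theorem \ref{0828-1}, based on the bounded-overlap covering of Lemma \ref{0618-1} and the comparisons $\om^*(c_{k,j})\approx\om^*(z)$, $1-|c_{k,j}|\approx1-|z|$ valid for $z\in R_{k,j}$. So the substance lies in \textit{(i)}$\Leftrightarrow$\textit{(ii)}.

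For \textit{(ii)}$\Rightarrow$\textit{(i)} I would use the operator $J$ of Lemma \ref{0916-1} attached to an $r$-lattice $\{b_k\}$, an orthonormal basis $\{e_k\}$ of $H(\Waw)$, and the integer $s$ of that lemma. Since $J$ is bounded and onto, $J^*$ is bounded below, so $JJ^*$ is invertible and $J$ has a bounded right inverse; factoring $J^*\HT_\mu J=(\HT_\mu^{1/2}J)^*(\HT_\mu^{1/2}J)$ one gets that $\HT_\mu\in\mcS_p(H(\Waw))$ if and only if $J^*\HT_\mu J\in\mcS_p(H(\Waw))$, with comparable quasinorms. The operator $J^*\HT_\mu J$ is positive, and for a positive compact operator $A$ with spectral resolution $A=\sum_j\lambda_j\langle\cdot,u_j\rangle u_j$ and $0<p\le1$, Jensen's inequality applied to the doubly stochastic matrix $(|\langle e_k,u_j\rangle|^2)_{k,j}$ yields $|A|_p^p\le\sum_k\langle Ae_k,e_k\rangle^p$. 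Here $\langle J^*\HT_\mu Je_k,e_k\rangle=\int_\BB|Je_k(w)|^2\,d\mu(w)=\int_\BB\frac{(1-|b_k|^2)^{s}}{\Waw(b_k)\,|1-\langle w,b_k\rangle|^{n-1+s}}\,d\mu(w)$; writing $\mu=\sum_{R_{m,i}\in\Upsilon}\mu|_{R_{m,i}}$, bounding $|1-\langle w,b_k\rangle|$ below by its value at the centre $c_{m,i}$ on each cell, using $(\sum a_m)^p\le\sum a_m^p$, interchanging the sums, comparing $\sum_k\frac{(1-|b_k|^2)^{sp}}{\Waw(b_k)^p|1-\langle c_{m,i},b_k\rangle|^{(n-1+s)p}}$ with the integral $I(c_{m,i})$ of Lemma \ref{0916-2} (legitimate since $\Waw$ is regular and, by the hypothesis of the theorem, $(1-t)^{-\alpha}\om^*(t)/(1-t)^a$ decreases), invoking the bound \eqref{0918-1} and $\Waw(c_{m,i})^p(1-|c_{m,i}|^2)^{p(n-1)}\approx\om^*(c_{m,i})^p(1-|c_{m,i}|)^{p(n-1-\alpha)}$, I obtain $|J^*\HT_\mu J|_p^p\lesssim M_\mu<\infty$, hence $\HT_\mu\in\mcS_p$ and $|\HT_\mu|_p^p\lesssim M_\mu$.

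The hard direction is \textit{(i)}$\Rightarrow$\textit{(ii)}, since for $0<p<1$ it is simply false that a positive operator in $\mcS_p$ has $\ell^p$ diagonal in an arbitrary orthonormal basis, so one must exploit the localisation inherent in $\HT_\mu$. I would use Lemma \ref{0618-1} to split $\Upsilon$ into $N=N(r)$ sub-families $\Upsilon_1,\dots,\Upsilon_N$ whose cells are pairwise far apart (Bergman distance $>M$ for a constant $M$ fixed below), put $\nu_i=\sum_{R\in\Upsilon_i}\mu|_R$, so $\mu=\sum_i\nu_i$ and $M_\mu=\sum_{i=1}^NM_{\nu_i}$; since $0\le\nu_i\le\mu$ gives $0\le\HT_{\nu_i}\le\HT_\mu$ and hence $|\HT_{\nu_i}|_p\le|\HT_\mu|_p$, it suffices to prove $M_{\nu_i}\lesssim_M|\HT_{\nu_i}|_p^p$ for each $i$. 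Fixing $i$ and letting $\{b_k\}$ be the ($M$-separated) centres of the cells of $\Upsilon_i$, form the synthesis operator $V:\ell^2\to H(\Waw)$ from the corresponding reproducing-kernel type vectors $Je_k$; then $B=V^*\HT_{\nu_i}V$ is positive, lies in $\mcS_p(\ell^2)$, and $|B|_p\lesssim|\HT_{\nu_i}|_p$. The key point is that for $M$ large enough, Lemma \ref{0916-2} and in particular the tail estimate \eqref{0917-1} make each $Je_k$ so concentrated near $b_k$ that, after conjugating $B$ by the square root of $\mathrm{diag}(B)$, the off-diagonal part has operator norm $\le\tfrac12$; consequently $B\ge\tfrac12\,\mathrm{diag}(B)$ in the operator order, and since $\mathrm{diag}(B)$ is positive and diagonal, $|\HT_{\nu_i}|_p^p\gtrsim|B|_p^p\gtrsim\sum_k(\mathrm{diag}(B))_{kk}^p$. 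The single-cell lower bound $(\mathrm{diag}(B))_{kk}=\int_\BB|Je_k(w)|^2\,d\nu_i(w)\gtrsim\int_{R_k}|Je_k(w)|^2\,d\mu(w)\gtrsim\frac{\mu(R_k)}{(1-|b_k|)^{n-1}\Waw(b_k)}\approx\frac{\mu(R_k)}{(1-|b_k|)^{-\alpha+n-1}\om^*(b_k)}$ (using $|Je_k|^2\approx\Waw(b_k)^{-1}(1-|b_k|)^{-(n-1)}$ on $\Delta(b_k,t)\supset R_k$ and Lemma \ref{1210-3}) then gives $\sum_k(\mathrm{diag}(B))_{kk}^p\gtrsim M_{\nu_i}$, so $M_\mu=\sum_iM_{\nu_i}\lesssim N|\HT_\mu|_p^p$; together with \textit{(ii)}$\Rightarrow$\textit{(i)} this yields $|\HT_\mu|_p^p\approx M_\mu$.

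The step I expect to be the main obstacle is precisely this near-diagonalisation of $B$: for $p<1$ one cannot pass from $B\in\mcS_p$ to $\ell^p$-summability of its diagonal by soft arguments, so the separation constant $M$ (and thereby $N$) must be chosen quantitatively, using \eqref{0917-1}, large enough that the normalised frame vectors $Je_k$ over an $M$-separated sublattice are almost orthogonal and $B$ dominates half of its own diagonal. The remaining ingredients — the cell-centre comparisons $|1-\langle w,b_k\rangle|\approx|1-\langle c_{m,i},b_k\rangle|$ on each $R_{m,i}$, the passage between lattice sums and the integrals of Lemma \ref{0916-2}, and the equivalence $\HT_\mu\in\mcS_p\Leftrightarrow J^*\HT_\mu J\in\mcS_p$ — are routine but must be carried out uniformly in the lattice.
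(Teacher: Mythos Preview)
Your treatment of \textit{(ii)}$\Leftrightarrow$\textit{(iii)} is fine and identical to the paper's. Your \textit{(ii)}$\Rightarrow$\textit{(i)} is also correct and in fact a bit cleaner than the paper's version: you use the Jensen inequality $|A|_p^p\le\sum_k\langle Ae_k,e_k\rangle^p$ for positive $A$ and $0<p\le1$, whereas the paper uses the cruder double sum $|A|_p^p\le\sum_{j,k}|\langle Ae_j,e_k\rangle|^p$ and then has to estimate all matrix entries. Both routes lead to the same integral $I(c_{m,i})$ of Lemma~\ref{0916-2} after converting the lattice sum over $\{b_k\}$ to an integral. (One detail you skipped: before factoring $\HT_\mu=(\HT_\mu^{1/2})^2$ you need $\HT_\mu$ bounded; this follows because $M_\mu<\infty$ for $p<1$ forces $M_\mu<\infty$ for $p=1$, so Theorem~\ref{0828-1} gives $\HT_\mu\in\mcS_1$.)

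The genuine gap is in \textit{(i)}$\Rightarrow$\textit{(ii)}. Your plan is to establish the operator inequality $B\ge\tfrac12\,\mathrm{diag}(B)$ by making the separation $M$ so large that $\mathrm{diag}(B)^{-1/2}(B-\mathrm{diag}(B))\,\mathrm{diag}(B)^{-1/2}$ has operator norm $\le\tfrac12$. This cannot be achieved uniformly in $\mu$. The normalized off-diagonal entries are $B_{jk}/\sqrt{B_{jj}B_{kk}}$, and any Schur-type bound on these involves the ratios $\mu(R_j)/\mu(R_k)$, which are completely uncontrolled. Concretely, take $\nu_i=\delta_{w_0}$ a point mass in one cell. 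Then $B$ is the rank-one operator $B_{jk}=h_k(w_0)\overline{h_j(w_0)}$, so every normalized off-diagonal entry has modulus exactly $1$, regardless of $M$; the matrix $\mathrm{diag}(B)^{-1/2}B\,\mathrm{diag}(B)^{-1/2}$ is rank one with all entries unimodular and hence has unbounded operator norm as the number of lattice points grows. In particular $B\ge\tfrac12\,\mathrm{diag}(B)$ fails (test against $e_1-e_2$). Increasing $M$ only thins the lattice; it does not make the normalized off-diagonal small.

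The paper avoids this by never normalizing. It writes $J^*\HT_{\mu_*}J=D+E$ with $D$ the diagonal part and uses, for $0<p\le1$, the subadditivity $|D|_p^p\le|J^*\HT_{\mu_*}J|_p^p+|E|_p^p$ together with the matrix-entry bound $|E|_p^p\le\sum_{j\ne k}|B_{jk}|^p$. The key estimate (via Lemma~\ref{0916-2}, in particular the tail bound~\eqref{0917-1}) is then
\[
\sum_{j\ne k}|B_{jk}|^p\ \le\ C_2\,\varepsilon\sum_{m}\Bigl(\tfrac{\mu_*(D(b_m,5r))}{(1-|b_m|)^{-\alpha+n-1}\om^*(b_m)}\Bigr)^p,
\]
with $\varepsilon\to0$ as $M\to\infty$; crucially this upper bound is of the \emph{same shape} as the lower bound $|D|_p^p\ge C_1\sum_m(\cdots)^p$, so one can absorb and obtain $(C_1-C_2\varepsilon)\sum_m(\cdots)^p\le|J^*\HT_{\mu_*}J|_p^p\lesssim|\HT_\mu|_p^p$. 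Your architecture (split into $M$-separated subfamilies, use $0\le\nu_i\le\mu$) is exactly right; the repair is to replace the operator-ordering step by this direct $\mcS_p$-estimate of the off-diagonal, which does not see ratios of cell masses.
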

\begin{proof}
For convenience, let
$$
\Psi(t)=\left\{
\begin{array}{cc}
  \Waw(\frac{1}{2}), & t\in [0,\frac{1}{2}],  \\
  \Waw(t), & t\in[\frac{1}{2},1).
\end{array}
\right.
$$

({\it ii})$\Rightarrow$({\it i}). Suppose $M_\mu<\infty$. By Theorem \ref{0828-1}, $\HT_\mu\in S_1(H(\Waw))$. So, $\HT_\mu:H(\Waw)\to H(\Waw)$ is compact.
Then, by (\ref{0626-2}), we have
\begin{align*}
\langle \HT_\mu f,g\rangle_{H(\Waw)}=&\langle f,g \rangle_{L_\mu^2}.
\end{align*}
Let $\{b_j\}_{j=1}^\infty$ be a $r$-lattice, $s$ be a large enough positive integer,    and $\{e_j\}_{j=1}^\infty$ be a fixed  orthonormal basis of $H(\Waw)$. Define $J:H(\Waw)\to H(\Waw)$ as  in Lemma \ref{0916-1}. Then $J$ is bounded and onto.
Let  $J(e_j)=h_j,j=1,2,\cdots$.
By Proposition 1.30 in \cite{zhu}, we have
\begin{align*}
|J^*\HT_\mu J|_p^p
&\leq \sum_{x=1}^\infty\sum_{y=1}^\infty |\langle J^*\HT_\mu Je_x,e_y\rangle_{H(\Waw)}|^p
=\sum_{x=1}^\infty\sum_{y=1}^\infty |\langle  h_x,h_y\rangle_{L_\mu^2}|^p          \\
&\leq \sum_{x=1}^\infty \sum_{y=1}^\infty\left|\sum_{k=1}^\infty \int_{D(b_k,5r)}|h_x(z)h_y(z)d\mu(z)\right|^p\\
&\leq \sum_{k=1}^\infty (\mu(D(b_k,5r))^p\left(\sum_{x=1}^\infty |h_x(z_{x,k})|^p\right)\left(\sum_{y=1}^\infty|h_y(z_{y,k})|^p\right).
\end{align*}
Here, $z_{x,k}\in \ol{D(b_k,5r)}$ such that $|h_x(z_{x,k})|=\sup_{z\in D(b_k,5r)}|h_x(z)|$.
Since $s$ is large enough, using subharmonicity, by Lemma \ref{0916-2}, we have
\begin{align*}
\sum_{x=1}^\infty |h_x(z_{x,k})|^p
&\lesssim \sum_{x=1}^\infty  \frac{1}{(1-|z_{x,k}|)^{n+1}}\int_{D(z_{x,k},r)}|h_x(z)|^p dV(z)\\
&\lesssim \frac{1}{(1-|b_k|)^{n+1}} \int_{D(b_k,6r)}\left(\sum_{x=1}^\infty |h_x(z)|^p\right)dV(z),
\end{align*}
and
\begin{align*}
\sum_{x=1}^\infty |h_x(z)|^p
&=\sum_{x=1}^\infty \frac{(1-|b_x|^2)^\frac{sp}{2}}{\Waw(b_x)^\frac{p}{2}|1-\langle z,b_x\rangle|^\frac{(n-1+s)p}{2}}  \\
&\lesssim \sum_{x=1}^\infty \frac{(1-|b_x|^2)^{\frac{sp}{2}-(n+1)}}{\Waw(b_x)^\frac{p}{2}}\int_{D(b_x,\frac{r}{10})}\frac{1}{|1-\langle \eta, z\rangle|^{\frac{(n-1+s)p}{2}}}dV(\eta)  \\
&\lesssim \int_\BB \frac{(1-|\eta|^2)^{\frac{sp}{2}-(n+1)}}{\Psi(\eta)^\frac{p}{2}|1-\langle \eta, z\rangle|^{\frac{(n-1+s)p}{2}}}dV(\eta)
\approx \frac{1}{\Psi(z)^\frac{p}{2}(1-|z|)^{\frac{(n-1)p}{2}}}.
\end{align*}
Therefore,
\begin{align*}
\sum_{x=1}^\infty |h_x(z_{x,k})|^p
&\lesssim \frac{1}{(1-|b_k|)^{n+1}} \int_{D(b_k,6r)}  \frac{1}{\Psi(z)^\frac{p}{2}(1-|z|)^{\frac{(n-1)p}{2}}}  dV(z)\\
&\approx \frac{1}{\Waw(b_k)^\frac{p}{2}(1-|b_k|)^{\frac{(n-1)p}{2}}}.
\end{align*}
So, we obtain
\begin{align*}
|J^*\HT_\mu J|_p^p &\lesssim \sum_{k=1}^\infty \frac{(\mu(D(b_k,5r)))^p}{\Waw(b_k)^p(1-|b_k|)^{(n-1)p}}
\approx \sum_{k=1}^\infty \left(\frac{\mu(D(b_k,5r))}{(1-|b_k|)^{-\alpha+n-1}\om^*(b_k)}\right)^p.
\end{align*}
By Lemma \ref{0618-1},   $|J^*\HT_\mu J|_p^p\lesssim M_\mu$. By Proposition 1.30 in \cite{zhu}, $\HT_\mu\in \mathcal{S}_p(H(\Waw))$  and $|\HT_\mu|_p^p\lesssim M_\mu$. So, ({\it i}) holds.

({\it i})$\Rightarrow$({\it ii}). Suppose $\HT_\mu\in \mathcal{S}_p(H(\Waw))$. Let $\{\tau_j\}_{j=1}^\infty$ be a $r$-lattice, $s$ be a large enough positive integer,    and $\{e_j\}$ be a fixed orthonormal basis of $H(\Waw)$.

For any given $R>10r$, $\{\tau_j\}$ can be divided into $N_R$ subsequences such that the Bergman metric between any two points in each subsequence is at least $2R$. Let $\{b_j\}$ be such a subsequence and define
$$d\mu_*(z)=\sum_{j=1}^\infty \chi_{D(b_j,5r)}(z)d\mu(z).$$
Then we have $|\HT_{\mu_*}|_p\leq |\HT_\mu|_p$.

Define $J:H(\Waw)\to H(\Waw)$ as we did in Lemma \ref{0916-1} and let $J(e_j)=h_j,j=1,2,\cdots$. Then $J$ is bounded.
Let $  \|\cdot\|=\|\cdot\|_{H(\Waw\to H(\Waw))} $ for short.  So,
$$|J^*\HT_{\mu_*}J|_p\leq \|J\|^2 |\HT_{\mu_*}|_p\leq \|J\|^2 |\HT_{\mu}|_p.$$

For any $f\in H(\Waw)$, let
\begin{align*}
D(f)=\sum_{k=1}^\infty \langle \HT_{\mu_*} h_k,h_k\rangle_{H(\Waw)}\langle f,e_k\rangle_{H(\Waw)}e_k,
\end{align*}
and
$$E(f)=\sum_{j=1}^\infty\sum_{k\neq j} \langle \HT_{\mu_*}(h_k),h_j\rangle_{H(\Waw)}\langle f,e_k\rangle_{H(\Waw)}e_j.$$
Since $$\langle J^*\HT_{\mu_*} J e_k ,e_j\rangle_{H(\Waw)}=\langle \HT_{\mu_*}(h_k),h_j\rangle_{H(\Waw)},\,\,j,k=1,2,\cdots,$$
we get $J^*\HT_{\mu_*} J=D+E$.

By (\ref{0626-2}), there exists a constant $C_1>0$ independent on $R$, such that
\begin{align*}
|D|_p^p  &=\sum_{k=1}^\infty |\langle \HT_{\mu_*} h_k ,h_k\rangle_{H(\Waw)}|^p  =\sum_{k=1}^\infty \left(\int_\BB |h_k(z)|^2d{\mu_*}(z)\right)^p\\
&\geq \sum_{k=1}^\infty \left(\int_{D(b_k,6r)} |h_k(z)|^2d{\mu_*}(z)\right)^p
\geq C_1\sum_{k=1}^\infty \left(\frac{\mu_*(D(b_k,6r))}{(1-|b_k|)^{-\alpha+n-1}\om^*(b_k)}\right)^p.
\end{align*}
By Proposition 1.29 in \cite{zhu} and   (\ref{0626-2}), we have
{\small
\begin{align}
|E|_p^p
&\leq \sum_{j=1}^\infty\sum_{k\neq j}|\langle \HT_{\mu_*} h_k,h_j\rangle_{H(\Waw)}|^p
= \sum_{j=1}^\infty\sum_{k\neq j}\left(\int_{\BB}|h_k(z)||h_j(z)|d\mu_*(z)\right)^p  \nonumber\\
&= \sum_{j=1}^\infty\sum_{k\neq j}\left(\sum_{i=1}^\infty \int_{D(b_i,5r)}|h_k(z)||h_j(z)|d\mu_*(z)\right)^p   \nonumber \\
&\leq \sum_{j=1}^\infty\sum_{k\neq j}\sum_{i=1}^\infty \left(\int_{D(b_i,5r)}|h_k(z)||h_j(z)|d\mu_*(z)\right)^p \nonumber\\
&\leq \sum_{i=1}^\infty (\mu_*(D(b_i,5r)))^p
\left(\sum_{j=1}^\infty\sum_{k\neq j} |h_k(z_{k,i})|^p |h_j(z_{j,i})|^p\right).\label{0911-1}
\end{align}
}Here,  $z_{j,i}\in\ol{D(b_i,5r)}$ such that $|h_j(z_{j,i})|=\sup_{z\in D(b_i,5r)}|h_j(z)|$.
Using subharmonicity,
we have
{\small
\begin{align*}
\sum_{k\neq j}|h_k(z_{k,i})|^p
\lesssim &\sum_{k\neq j} \frac{(1-|b_k|^2)^{\frac{sp}{2}}}{\Waw(b_k)^\frac{p}{2}}
 \int_{D(b_k,r)}\int_{D(z_{k,i},r)} \left|\frac{1}{1-\langle z, u\rangle}\right|^\frac{(n-1+s)p}{2}d\lambda(z)d\lambda(u)    \\
 \lesssim &\sum_{k\neq j} \frac{(1-|b_k|^2)^{\frac{sp}{2}}}{\Psi(b_k)^\frac{p}{2}}
 \int_{D(b_k,r)}\int_{D(b_i,6r)} \left|\frac{1}{1-\langle z, u\rangle}\right|^\frac{(n-1+s)p}{2}d\lambda(z)d\lambda(u)    \\
\lesssim &\sum_{k\neq j}\int_{D(b_k,r)} \frac{(1-|u|^2)^\frac{sp}{2}}{\Psi(u)^\frac{p}{2}}\int_{D(b_i,6r)} \left|\frac{1}{1-\langle z, u\rangle}\right|^\frac{(n-1+s)p}{2} d\lambda(z)d\lambda(u).
\end{align*}
}
So, by Fubini's theorem, the double sums $\sum\limits_{j=0}^\infty\sum\limits_{k\neq j}$ in (\ref{0911-1}) are dominated by a constant (dependent on $R$) times
\begin{align*}
&\iint\limits_{\cup_{j}\cup_{k\neq j} D(b_j,r)\times D(b_k,r) } \frac{(1-|u|^2)^\frac{sp}{2}}{\Psi(u)^\frac{p}{2}}\frac{(1-|v|^2)^\frac{sp}{2}}{\Psi(v)^\frac{p}{2}}  \\
&\,\,\,\,\,\,\,\,\,\,\,\,\,\,\,\,\cdot \left(\iint\limits_{D(b_i,6r)\times D(b_i,6r)}\left|\frac{1}{(1-\langle z,u\rangle)(1-\langle w,v\rangle)}\right|^\frac{(n-1+s)p}{2}
d\lambda(z)d\lambda(w)\right)d\lambda(u)d\lambda(v)  \\
&\leq
\iint\limits_{G_R} \frac{(1-|u|^2)^\frac{sp}{2}}{\Psi(u)^\frac{p}{2}}\frac{(1-|v|^2)^\frac{sp}{2}}{\Psi(v)^\frac{p}{2}}    \\
&\,\,\,\,\,\,\,\,\,\,\,\,\,\,\,\,\,\cdot\left(\iint\limits_{D(b_i,6r)\times D(b_i,6r)}\left|\frac{1}{(1-\langle z,u\rangle)(1-\langle w,v\rangle)}\right|^\frac{(n-1+s)p}{2}
d\lambda(z)d\lambda(w)\right)d\lambda(u)d\lambda(v)  \\
&\lesssim  \frac{1}{(1-|b_i|)^{2(n+1)}}\iint\limits_{D(b_i,6r)\times D(b_i,6r)}\left(\iint\limits_{G_R} Y(u,v,z,w)dV(u)dV(v)\right)dV(z)dV(w),
\end{align*}
where
$$Y(u,v,z,w)= \frac{(1-|u|^2)^{\frac{sp}{2}-(n+1)}(1-|v|^2)^{\frac{sp}{2}-(n+1)}}{\Psi(u)^\frac{p}{2}\Psi(v)^\frac{p}{2}|(1-\langle z,u\rangle)(1-\langle w,v\rangle)|^\frac{(n-1+s)p}{2}}$$
and
$$G_R=\left\{(u,v):\beta(u,v)>2R-2r\right\}\supset \cup_{j}\cup_{k\neq j} D(b_j,r)\times D(b_k,r). $$

Since $z,w\in D(b_i,6r)$, by (2.20) in \cite{Zk2005} and Lemma \ref{0916-2}, for any given $\varepsilon>0$, there exists $R_\varepsilon>10r$ such that
\begin{align*}
K_{i,1}(z,w)&=   \int_{\BB\backslash D(b_i,R_\varepsilon)}\int_{\BB\backslash D(v,2R_\varepsilon-2r)} Y(u,v,z,w) dV(u)dV(v) \\
&\leq \int_{\BB\backslash D(b_i,R_\varepsilon)}\int_{\BB} Y(u,v,z,w) dV(u)dV(v)  \\
&\lesssim \frac{1}{\Psi(z)^\frac{p}{2}(1-|z|^2)^\frac{(n-1)p}{2}}\int_{\BB\backslash D(b_i,R_\varepsilon)}
\frac{(1-|v|^2)^{\frac{sp}{2}-(n+1)}}{|1-\langle w,v\rangle|^\frac{(n-1+s)p}{2}\Psi(v)^\frac{p}{2}}dV(v)\\
&\approx \frac{1}{\Psi(b_i)^\frac{p}{2}(1-|b_i|^2)^\frac{(n-1)p}{2}}\int_{\BB\backslash D(b_i,R_\varepsilon)}
\frac{(1-|v|^2)^{\frac{sp}{2}-(n+1)}}{|1-\langle b_i,v\rangle|^\frac{(n-1+s)p}{2}\Psi(v)^\frac{p}{2}}dV(v)\\
&\lesssim\varepsilon\left( \frac{1}{\Psi(b_i) (1-|b_i|)^{n-1}}\right)^p
\end{align*}
and
\begin{align*}
K_{i,2}(z,w)&=\int_{D(b_i,R_\varepsilon)}\int_{\BB\backslash D(v,2R_\varepsilon-2r)} Y(u,v,z,w) dV(u)dV(v)  \\
&\leq \int_{\BB}\int_{\BB\backslash D(b_i,R_\varepsilon-2r)} Y(u,v,z,w) dV(u)dV(v)  \\
&\lesssim\varepsilon\left( \frac{1}{\Psi(b_i) (1-|b_i|)^{n-1}}\right)^p.
\end{align*}
Let $R=R_\varepsilon$. Since
\begin{align*}
\iint\limits_{G_{R_\varepsilon}} Y(u,v,z,w)dV(u)dV(v)
=\int_\BB\int_{\BB\backslash D(v,2R_\varepsilon-2r)} Y(u,v,z,w) dV(u)dV(v),
\end{align*}
the double sums $\sum\limits_{j=0}^\infty\sum\limits_{k\neq j}$ in (\ref{0911-1}) are dominated by a constant (dependent on $R$) times
\begin{align*}
 & \frac{1}{(1-|b_i|)^{2(n+1)}}\iint\limits_{D(b_i,6r)\times D(b_i,6r)}\left(\iint\limits_{G_R} Y(u,v,z,w)dV(u)dV(v)\right)dV(z)dV(w)  \\
 \lesssim &
 \frac{1}{(1-|b_i|)^{2(n+1)}}\iint\limits_{D(b_i,6r)\times D(b_i,6r)} (K_{i,1}(z,w)+K_{i,2}(z,w))dV(z)dV(w)  \\
 \lesssim&
 \frac{\varepsilon}{\Psi (b_i)^p(1-|b_i|)^{(n-1)p}}
 \approx \frac{\varepsilon}{(1-|b_i|)^{(-\alpha+n-1)p}\om^*(b_i)^p}.
\end{align*}
Therefore,
$$|E|_p^p\lesssim \varepsilon\sum_{i=1}^\infty \left(\frac{\mu_*(D(b_i,5r))}{(1-|b_i|)^{-\alpha+n-1}\om^*(b_i)}\right)^p.$$
So, there exist $C_1,C_2$, such that
\begin{align*}
|\HT_{\mu^*}|_p^p
\geq |D|_p^p-|E|_p^p
\gtrsim (C_1-C_2\varepsilon) \sum_{i=1}^\infty \left(\frac{\mu_*(D(b_i,5r))}{(1-|b_i|)^{-\alpha+n-1}\om^*(b_i)}\right)^p
\end{align*}
for any given $\varepsilon>0$. Let $\varepsilon=\frac{C_1}{2C_2}$. Then we can choose $R_\varepsilon$ and $N_{R_\varepsilon}$ such that
 \begin{align*}
N_{R_\varepsilon}|\HT_{\mu}|_p^p
\geq \frac{C_1}{2} \sum_{i=1}^\infty \left(\frac{\mu_*(D(\tau_i,5r))}{(1-|\tau_i|)^{-\alpha+n-1}\om^*(\tau_i)}\right)^p.
\end{align*}
By Lemma \ref{0618-1}, $M_\mu\lesssim |\HT_\mu|_p^p$.

({\it ii})$\Leftrightarrow$({\it iii}) This can be obtained by the proof of Theorem \ref{0828-1}.

The proof is complete.
\end{proof}

If $\om\in\hD$, by Lemma \ref{1210-3}, $(1-t)^{-1}\om^*(t)\in\R$. By Lemma \ref{0507-1}, there exist $a>-1$ and $\delta\in(0,1)$ such that \begin{align*}
\frac{(1-t)^{-1}\om^*(t)}{(1-t)^a}\searrow 0,\,\,\mbox{ when }\,\,\delta\leq t<1.
\end{align*}
  Therefore, by Lemma \ref{0607-8},  Theorems  \ref{0828-1} and \ref{0919-1}, we get two  characterizations of  $\T_\mu\in S_p(A_\om^2)$ for positive Borel Measure $\mu$ and $\om\in\hD$ as follows.
\begin{Theorem}\label{0829-2}
Let $0<p<\infty$, $\om\in\hD$ and $\mu$ be a positive Borel measure on $\BB$.
Let
$$\widehat{\mu_r}=\frac{\mu(D(z,r))}{(1-|z|)^{n-1}\om^*(z)},\,\,\mbox{ and }\,\,d\lambda(z)=\frac{dV(z)}{(1-|z|^2)^{n+1}}.$$
Then the following statements are equivalent.
\begin{enumerate}[(i)]
  \item $\T_\mu\in \mcS_p(A_\om^2)$;
  \item $$ \sum_{R_{k,j}\in\Upsilon}\left(\frac{\mu(R_{k,j})}{(1-|c_{k,j}|)^{n-1}\om^*(c_{k,j})}\right)^p<\infty;$$
  \item $\widehat{\mu_r}\in L^p(\BB,d\lambda)$ for some (equivalently for all) $r>0$.
\end{enumerate}
\end{Theorem}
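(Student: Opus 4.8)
The plan is to read the theorem off from Theorems \ref{0828-1} and \ref{0919-1} specialized to the exponent $\alpha=0$, after transporting them to $A_\om^2$ through the identification $A_\om^2=H(W_0^\om)$ of Lemma \ref{0607-8}. First I would check that this identification is isometric and carries $\T_\mu$ to $\HT_\mu$. For $f\in A_\om^2$ the constant $f(0)$ and $f-f(0)$ are orthogonal in $A_\om^2$, since $\int_\BB(f(z)-f(0))\om(z)dV(z)=0$ by the mean value property for radial weights; hence $\|f\|_{A_\om^2}^2=|f(0)|^2\om(\BB)+\|f-f(0)\|_{A_\om^2}^2$, and (\ref{0605-2}) identifies the last term with $4\int_\BB|\Re f(z)|^2W_0^\om(z)dV(z)$. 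Thus $\|f\|_{A_\om^2}=\|f\|_{H(W_0^\om)}$, so the two Hilbert spaces coincide with the same inner product, their reproducing kernels agree ($B_z^\om=K_z^{W_0^\om}$), and $\T_\mu=\HT_\mu$ as operators on this common space. In particular $\mcS_p(A_\om^2)=\mcS_p(H(W_0^\om))$, and at $\alpha=0$ one has $\widehat{\mu_{r,0}}=\widehat{\mu_r}$ while the quantity $M_\mu$ of Theorems \ref{0828-1} and \ref{0919-1} becomes precisely the sum in part (ii) of the present statement.

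Next I would verify that the hypotheses of those theorems hold at $\alpha=0$. For $1\le p<\infty$ the extra condition $p\alpha<1$ of Theorem \ref{0828-1} is automatic since $\alpha=0$, so that theorem applies directly. For $0<p<1$, Theorem \ref{0919-1} additionally demands $-1<a<\infty$ with $n-1+a>0$ and $\frac{(1-t)^{-\alpha}\om^*(t)}{(1-t)^{a}}=\frac{\om^*(t)}{(1-t)^{a}}$ eventually decreasing to $0$. By Lemma \ref{1210-3} we have $(1-t)^{-1}\om^*(t)\in\R$, and since $\om^*$ is continuous on $(0,1)$, Lemma \ref{0507-1}(iii) furnishes $a_0>-1$ and $\delta\in(0,1)$ with $\frac{(1-t)^{-1}\om^*(t)}{(1-t)^{a_0}}\searrow 0$ on $[\delta,1)$; taking $a=a_0+1>0$ gives $n-1+a>0$ and $\frac{\om^*(t)}{(1-t)^{a}}\searrow 0$, which is exactly the required condition.

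With the hypotheses in place, Theorem \ref{0828-1} at $\alpha=0$ yields the equivalence of (i), (ii), (iii) together with $|\T_\mu|_p^p\approx M_\mu\approx\|\widehat{\mu_r}\|_{L^p_\lambda}^p$ when $1\le p<\infty$, and Theorem \ref{0919-1} at $\alpha=0$ yields the same statement when $0<p<1$; combining the two ranges completes the proof. The only steps that require attention are the two reductions just described -- the norm identity $\|f\|_{A_\om^2}=\|f\|_{H(W_0^\om)}$ that forces $\T_\mu=\HT_\mu$, and the check that $\alpha=0$ lies in the admissible range of Theorem \ref{0919-1} -- and both are routine given (\ref{0605-2}) and Lemmas \ref{1210-3} and \ref{0507-1}.
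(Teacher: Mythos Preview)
Your proposal is correct and follows essentially the same route as the paper: the paper derives this theorem directly from Theorems \ref{0828-1} and \ref{0919-1} at $\alpha=0$ via the identification $A_\om^2=H(W_0^\om)$ of Lemma \ref{0607-8}, after checking via Lemmas \ref{1210-3} and \ref{0507-1} that $(1-t)^{-1}\om^*(t)\in\R$ supplies the monotonicity hypothesis needed in Theorem \ref{0919-1}. You have simply made explicit two points the paper leaves implicit---that the identity $A_\om^2=H(W_0^\om)$ is isometric (hence $B_z^\om=K_z^{W_0^\om}$ and $\T_\mu=\HT_\mu$), and the shift $a=a_0+1$ that guarantees $n-1+a>0$---both of which are routine.
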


\section{Schatten class  Volterra operator}

Suppose $0<p,q<\infty$ and $\om\in\hD$. For any $g\in H(\BB)$, the Volterra integral operator on $H(\BB)$ is defined by
$$T_g f(z)=\int_0^1 f(tz)\Re g(tz)\frac{dt}{t}, \,\,f\in H(\BB),\,\,z\in\BB.$$
The operator $T_g$ was   introduced in \cite{hu}. See  \cite{DjLsLxSy2019arxiv, hu, hu3,  pau}, for example, for the study of this operator.  In \cite{DjLsLxSy2019arxiv}, we characterized the boundedness and compactness of $T_g:A_\om^p\to A_\om^q$ when $\om\in\hD$ and $0<p\leq q<\infty$ by using two kinds of function spaces $\mathcal{C}^1(\om^*)$ and $\mathcal{C}_0^1(\om^*)$, respectively.  Recall that $\mathcal{C}_0^1(\om^*)$ consists of all the functions $g\in H(\BB)$ such that
$$\lim_{|a|\to 1}\frac{\int_{S_a}|\Re g(z)|^2\om^*(z)dV(z)}{\om(S_a)}=0.$$

When $p>0$, the Besov space $B_p$ on   $\BB$ is the space consisting of all $g\in H(\BB)$ such that
$$\int_\BB |\Re g(z)|^p(1-|z|^2)^{p-(n+1)}dV(z)<\infty.$$
As we know, when $p\leq n$, $B_p=\CC$.

Let $\B_0$ and $VMOA$ denote the little Bloch space and  vanishing mean oscillation holomorphic function space, respectively, see \cite{Zk2005} for example.

\begin{Lemma}\label{0829-1}
Suppose  $p>0$ and $\om\in\hD$. For any $g\in H(\BB)$, $g\in B_p$ if and only if
\begin{align}
\sum_{R_{k,j}\in\Upsilon}\left(\int_{R_{k,j}}|\Re g(z)|^2\frac{dV(z)}{(1-|z|)^{n-1}}\right)^\frac{p}{2}<\infty.\label{0829-3}
\end{align}
\end{Lemma}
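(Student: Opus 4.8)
The plan is to exploit the fact that the family $\Upsilon=\{R_{k,j}\}$ is a covering of $\BB$ by ``dyadic-type'' boxes on which the quantities $(1-|z|)$ and the kernel-type weights are essentially constant, so that the integral defining the $B_p$-norm can be discretized box by box. First I would recall from Lemma~\ref{0618-1} that each $R_{k,j}$ has bounded ``overlap'' with the pseudo-hyperbolic balls $\Delta(a,r)$ and vice versa; more concretely, on each $R_{k,j}$ one has $1-|z|\approx 1-|c_{k,j}|\approx 2^{-k}$, and there is a $\delta$-lattice $\{a_i\}$ so that each $R_{k,j}$ is covered by boundedly many $\Delta(a_i,r)$ and each $\Delta(a_i,r)$ meets boundedly many $R_{k,j}$. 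Since $1-|z|\approx 2^{-k}$ throughout $R_{k,j}$, for the fixed exponents involved we have
$$
\int_{R_{k,j}}|\Re g(z)|^2\frac{dV(z)}{(1-|z|)^{n-1}}\approx 2^{(n-1)k}\int_{R_{k,j}}|\Re g(z)|^2\,dV(z),
$$
and similarly $(1-|z|^2)^{p-(n+1)}\approx 2^{-(p-(n+1))k}$ on $R_{k,j}$. This reduces both sides of the claimed equivalence to sums over $\Upsilon$ of powers of $\int_{R_{k,j}}|\Re g|^2\,dV$.

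Next I would handle the passage between an $L^2$-average over a box and an $L^p$-average. The point is that $|\Re g|^p$ is not subharmonic for $p<2$, but one can use the standard sub-mean-value estimate for the subharmonic function $|\Re g|^2$ together with Hölder's inequality to compare, for each box $R_{k,j}$ (or better, for a Bergman ball $D(c_{k,j},\rho)$ containing it and contained in a fixed dilate),
$$
\left(\frac{1}{V(D)}\int_{D}|\Re g(z)|^2\,dV(z)\right)^{\frac p2}\lesssim \frac{1}{V(D)}\int_{D}|\Re g(z)|^p\,dV(z)
$$
when $p\le 2$, and the reverse-type inequality (again via sub-mean-value plus Hölder, or via a pointwise bound $|\Re g(z)|\lesssim$ average on a slightly larger ball) when $p\ge 2$. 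Summing these local comparisons over $\Upsilon$, and using the bounded overlap from Lemma~\ref{0618-1} to control the enlargements, yields
$$
\sum_{R_{k,j}\in\Upsilon}\left(\int_{R_{k,j}}|\Re g(z)|^2\frac{dV(z)}{(1-|z|)^{n-1}}\right)^{\frac p2}\approx \sum_{R_{k,j}\in\Upsilon}\int_{R_{k,j}}|\Re g(z)|^p(1-|z|^2)^{p-(n+1)}\,dV(z),
$$
and the right-hand side telescopes exactly to $\int_\BB|\Re g(z)|^p(1-|z|^2)^{p-(n+1)}\,dV(z)$ because $\Upsilon$ is a disjoint cover of $\BB$ up to null sets. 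Checking the exponent bookkeeping $\tfrac p2\cdot\text{(geometry)}$ against $p-(n+1)$ and $-(n-1)$ is the one genuinely computational point, but it is routine: $\tfrac p2(n-1)$ coming from the weight $(1-|z|)^{-(n-1)}$ raised to the $\tfrac p2$, plus $(1-\tfrac p2)$ copies of $\log$-free volume factors $\approx 2^{-(n+1)k}$, matches $2^{-(p-(n+1))k}$.

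The main obstacle I expect is the low-exponent case $p\le n$: there $B_p=\CC$, so the claim asserts that the discrete sum \eqref{0829-3} converges only for constant $g$, and one must make sure the box-by-box comparison does not silently break when $p$ is small. This is handled by being careful that all the sub-mean-value/Hölder comparisons above go through for every $p>0$ (they do, since $|\Re g|^2$ is always subharmonic and Hölder needs only $p/2\le 1$ or $\ge 1$, never any restriction tied to $n$), so that \eqref{0829-3} is genuinely equivalent to finiteness of the $B_p$ integral for all $p>0$, and the degenerate conclusion $g\in\CC$ for $p\le n$ comes out automatically from the known fact $B_p=\CC$ rather than needing separate treatment. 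A secondary technical nuisance is that the innermost boxes ($k=0$) and the origin must be treated by hand, but there $g$ and $\Re g$ are bounded on $\tfrac12\BB$ and every quantity is comparable to a constant times $\|\Re g\|_{L^2(\frac12\BB)}^2$, so these finitely many terms contribute harmlessly to both sides.
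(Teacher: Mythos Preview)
Your approach is essentially correct and genuinely different from the paper's. One misstatement to fix: you write that ``$|\Re g|^p$ is not subharmonic for $p<2$'', but $\Re g(z)=\sum_k z_k\,\partial g/\partial z_k$ is \emph{holomorphic}, so $|\Re g|^p$ is subharmonic for every $p>0$ (since $\log|\Re g|$ is plurisubharmonic and $t\mapsto e^{pt}$ is convex increasing). This actually simplifies your argument: the local two-sided comparison
\[
\Bigl(\tfrac{1}{V(D)}\int_D|\Re g|^2\,dV\Bigr)^{p/2}\ \approx\ \tfrac{1}{V(D')}\int_{D'}|\Re g|^p\,dV
\]
follows cleanly in both regimes from H\"older/Jensen in one direction and from subharmonicity of $|\Re g|^{\min(p,2)}$ applied pointwise in the other, with $D'$ a fixed enlargement of $D$. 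Summing over $\Upsilon$ with Lemma~\ref{0618-1} then matches exponents exactly as you indicate, and your observation that the hypothesis $\om\in\hD$ plays no role is correct.

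By contrast, the paper proves the lemma operator-theoretically: it invokes the known equivalence $g\in B_p\Longleftrightarrow T_g\in\mathcal{S}_p(A^2)$ from \cite{HzTx2010pams}, identifies $T_g^*T_g$ with the Toeplitz operator $\T_{\mu_g}$ where $d\mu_g\approx |\Re g|^2(1-|z|)^2\,dV$, and then appeals to its own Theorem~\ref{0829-2} (the Schatten-class Toeplitz characterization with the standard weight) to translate $\T_{\mu_g}\in\mathcal{S}_{p/2}(A^2)$ into the discrete condition \eqref{0829-3}. Your route is more elementary and self-contained; the paper's route has the virtue of exhibiting the lemma as an immediate corollary of its main Schatten-class machinery, which is precisely how it is used again in the proof of Theorem~\ref{0830-2}.
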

\begin{proof}
By Theorem 1.1 in \cite{HzTx2010pams}, $g\in B_p$ if and only if $T_g\in \mathcal{S}_p(A^2)$.

Suppose $g\in B_p$. Since $B_p\subset\B_0$, $T_g:A^2\to A^2$ is compact. By Theorem 1.26 in \cite{zhu}, $T_g^*T_g\in \mathcal{S}_{\frac{p}{2}} (A^2)$.
For any $f,h\in A^2$, by Theorem 2 in \cite{DjLsLxSy2019arxiv}, we have
\begin{align*}
\langle (T_g)^*T_g f,h\rangle_{A^2} &=\langle T_g f,T_g h\rangle_{A^2}  \\
&=4\int_\BB f(z)\ol{h(z)}|\Re g(z)|^2\left( \frac{1}{2n}\log\frac{1}{|z|}-\frac{1}{4n^2}(1-|z|^{2n})  \right)\frac{dV(z)}{|z|^{2n}}.
\end{align*}
Let
\begin{align*}
d\mu_g(z)=4|\Re g(z)|^2\left( \frac{1}{2n}\log\frac{1}{|z|}-\frac{1}{4n^2}(1-|z|^{2n})  \right) \frac{dV(z)}{|z|^{2n}}.
\end{align*}
Then, $\mu_g(\BB)<\infty$ and
\begin{align*}
d\mu_g(z)\approx |\Re g(z)|^2(1-|z|)^2dV(z), \mbox{  as  }|z|\to 1.
\end{align*}
Let $B_w$ be the reproducing kernel of $A^2$ and $h=B_w$. We have $(T_g)^*T_g f=\T_{\mu_g}f$.  So, $\T_{\mu_g}\in S_{\frac{p}{2}}(A^2)$.
By Corollary \ref{0829-2}, (\ref{0829-3}) holds.

Suppose (\ref{0829-3}) holds.  For any given $\varepsilon>0$, there exists  $N_\varepsilon\in\NN$ such that
\begin{align}\label{0830-1}
\sum_{k>N_\varepsilon}\left(\int_{R_{k,j}}|\Re g(z)|^2\frac{dV(z)}{(1-|z|)^{n-1}}\right)^\frac{p}{2}<\varepsilon.
\end{align}
Since $|\Re g|^2$ is subharmonic, for any fixed $0<r<1$ and any  $z\in\BB$, we have
\begin{align*}
(1-|z|)^2|\Re g(z)|^2  \lesssim (1-|z|)^2\frac{\int_{\Delta(z,r)}|\Re g(\eta)|^2dV(\eta)}{V(\Delta(z,r))}
\approx \int_{\Delta(z,r)}  \frac{|\Re g(\eta)|^2}{(1-|\eta|)^{n-1}}dV(\eta).
\end{align*}
By \ref{0618-1}  and (\ref{0830-1}), there exist  $C=C(r,p)$ and $\delta\in(0,1)$ such that
$$\sup_{|z|>\delta} (1-|z|)^2|\Re g(z)|^2<C\varepsilon^\frac{2}{p}.$$
That is to say, $g\in\B_0$. Then $T_g:A^2\to A^2$ is compact, see  \cite{hu3,DjLsLxSy2019arxiv} for example.
Using Theorem 2 in \cite{DjLsLxSy2019arxiv}, we have $\mu_g(\BB)=\|T_g(1)\|_{A_2^2}<\infty.$
From the above proof, we have $T_g^*T_g=\T_{\mu_g}$. By Corollary \ref{0829-2} and (\ref{0829-3}), $\T_{\mu_g}\in \mathcal{S}_\frac{p}{2}(A^2)$.
Therefore, $T_g\in \mathcal{S}_p(A^2)$ and $g\in B_p$.
The proof is complete.
\end{proof}

\begin{Theorem}\label{0830-2}
Suppose $\om\in\hD$. If $p>n$, $T_g\in \mathcal{S}_p$ if and only if $g\in B_p$. If\, $0<p\leq n$, $T_g\in \mathcal{S}_p$ if and only if $g$ is a constant.
\end{Theorem}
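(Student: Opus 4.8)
The plan is to connect the Schatten-$p$ membership of $T_g$ on $A_\om^2$ to the characterization of Schatten-$p$ Toeplitz operators obtained in Theorem~\ref{0829-2}, by exhibiting $T_g^*T_g$ as a Toeplitz operator with a concrete measure. Concretely, following the computation already carried out in Lemma~\ref{0829-1}, one has for $f,h\in A_\om^2$ the identity
$$\langle T_g^*T_g f,h\rangle_{A_\om^2}=4\int_\BB f(z)\ol{h(z)}|\Re g(z)|^2\om^{n*}(z)\frac{dV(z)}{|z|^{2n}},$$
so that $T_g^*T_g=\T_{\mu_g}$ with $d\mu_g(z)=4|\Re g(z)|^2\om^{n*}(z)|z|^{-2n}dV(z)$, provided the right-hand side defines a finite Borel measure, i.e.\ provided $\mu_g(\BB)=\|T_g(1)\|_{A_\om^2}^2<\infty$. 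The first step is therefore to justify this reduction: when $g\in B_p$ one must check $g\in\mathcal{C}_0^1(\om^*)$ (equivalently $T_g:A_\om^2\to A_\om^2$ is compact) so the operator identity and the canonical decomposition are legitimate; this uses the subharmonicity of $|\Re g|^2$ and Lemma~\ref{0618-1} exactly as in the proof of Lemma~\ref{0829-1}. Then $T_g\in\mathcal{S}_p(A_\om^2)$ if and only if $T_g^*T_g=\T_{\mu_g}\in\mathcal{S}_{p/2}(A_\om^2)$, by Theorem~1.26 in \cite{zhu}.

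Next, apply Theorem~\ref{0829-2} with exponent $p/2$ to $\mu_g$: $\T_{\mu_g}\in\mathcal{S}_{p/2}(A_\om^2)$ is equivalent to
$$\sum_{R_{k,j}\in\Upsilon}\left(\frac{\mu_g(R_{k,j})}{(1-|c_{k,j}|)^{n-1}\om^*(c_{k,j})}\right)^{p/2}<\infty.$$
Since on each $R_{k,j}$ one has $\om^{n*}(z)\approx\om^*(z)\approx\om^*(c_{k,j})$ and $|z|\approx1$, the quantity $\mu_g(R_{k,j})$ is comparable to $\om^*(c_{k,j})\int_{R_{k,j}}|\Re g(z)|^2(1-|z|)^{-(n-1)}(1-|z|)^{n-1}\cdot(1-|z|)^{-(n-1)}dV(z)$ — more precisely $\mu_g(R_{k,j})\approx\om^*(c_{k,j})\int_{R_{k,j}}|\Re g(z)|^2 dV(z)/(1-|z|)^{\,?}$; carrying the bookkeeping through, the sum above becomes exactly
$$\sum_{R_{k,j}\in\Upsilon}\left(\int_{R_{k,j}}|\Re g(z)|^2\frac{dV(z)}{(1-|z|)^{n-1}}\right)^{p/2},$$
which by Lemma~\ref{0829-1} is finite if and only if $g\in B_p$. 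This gives both directions of the first assertion: $p>n$ is not needed here, but the equivalence $T_g\in\mathcal{S}_p\iff g\in B_p$ holds for every $p>0$; the role of $p>n$ is only that for such $p$ the space $B_p$ is nontrivial, whereas the second assertion records that when $0<p\le n$ one has $B_p=\CC$, so $T_g\in\mathcal{S}_p$ forces $g$ constant. (Conversely a constant $g$ gives $T_g=0\in\mathcal{S}_p$ trivially.)

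The main obstacle is the careful verification of the comparison $\mu_g(R_{k,j})\approx\om^*(c_{k,j})\int_{R_{k,j}}|\Re g(z)|^2(1-|z|)^{-(n-1)}dV(z)$ and, slightly before it, the clean identification $T_g^*T_g=\T_{\mu_g}$ together with the finiteness $\mu_g(\BB)<\infty$ when $g\in B_p$ (for which one needs $B_p\subset\B_0$, hence $T_g$ bounded on $A_\om^2$, and then $\|T_g(1)\|_{A_\om^2}^2<\infty$ via Theorem~2 in \cite{DjLsLxSy2019arxiv}). The rest is a dictionary translation: Theorem~1.26 of \cite{zhu} on the $p\leftrightarrow p/2$ passage, Theorem~\ref{0829-2} for the Schatten characterization of $\T_{\mu_g}$, Lemma~\ref{0618-1} to control the overlap of the blocks $R_{k,j}$ with pseudohyperbolic balls, and finally Lemma~\ref{0829-1} together with the standard fact $B_p=\CC$ for $p\le n$ (see the remark after the definition of $B_p$). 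I would organize the write-up as: (1) reduce to $\T_{\mu_g}\in\mathcal{S}_{p/2}$; (2) apply Theorem~\ref{0829-2} and simplify the block sum; (3) invoke Lemma~\ref{0829-1} to get $g\in B_p$; (4) dispose of the case $p\le n$ using $B_p=\CC$.
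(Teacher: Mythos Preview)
Your proposal is correct and follows essentially the same route as the paper: identify $T_g^*T_g$ with the Toeplitz operator $\T_{\mu_g}$ for $d\mu_g(z)=4|\Re g(z)|^2\om^{n*}(z)|z|^{-2n}dV(z)$, reduce via Theorem~1.26 of \cite{zhu} to $\T_{\mu_g}\in\mathcal{S}_{p/2}(A_\om^2)$, apply Theorem~\ref{0829-2}, simplify the block sum using $\om^{n*}(z)\approx\om^*(z)\approx\om^*(c_{k,j})$ on $R_{k,j}$, and conclude with Lemma~\ref{0829-1} and $B_p=\CC$ for $p\le n$. The only cosmetic difference is that the paper invokes $B_p\subset VMOA$ (and the results of \cite{DjLsLxSy2019arxiv}) for the preliminary compactness of $T_g$, where you use $B_p\subset\B_0$; both work, and your bookkeeping for $\mu_g(R_{k,j})$, once carried out, yields precisely $\mu_g(R_{k,j})\approx\om^*(c_{k,j})\int_{R_{k,j}}|\Re g|^2\,dV$ and hence $\frac{\mu_g(R_{k,j})}{(1-|c_{k,j}|)^{n-1}\om^*(c_{k,j})}\approx\int_{R_{k,j}}|\Re g(z)|^2(1-|z|)^{-(n-1)}dV(z)$ as needed.
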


\begin{proof}The case of $n=1$ is Theorem 6.1 in \cite{PjaRj2014book}. So, we can assume $n\geq 2$.

Suppose $g\in B_p$ and $p>n$. Since $B_p\subset VMOA$, by \cite[Proposition 4]{DjLsLxSy2019arxiv} and  \cite[Theorem 5]{DjLsLxSy2019arxiv}, $T_g$ is compact on $A_\om^2$.  Let
$$d\mu_g^\om(z)=\frac{4|\Re g(z)|^2\om^{n*}(z)dV(z)}{|z|^{2n}}.$$
By Theorem 2 in \cite{DjLsLxSy2019arxiv}, for all $f,g\in A_\om^2$, we get
\begin{align*}
\langle (T_g)^*T_g f,h\rangle_{A_\om^2} &=\langle T_g f,T_g h\rangle_{A_\om^2}  \\
&=4\int_\BB f(z)\ol{h(z)}\frac{|\Re g(z)|^2\om^{n*}(z)dV(z)}{|z|^{2n}} =\int_\BB f(z)\ol{h(z)}d\mu_g^\om(z).
\end{align*}
Let $B_w^\om$ be the reproducing kernel of $A_\om^2$ and $h=B_w^\om$. We have $(T_g)^*T_g f=\T_{\mu_g^\om}f$.
Using the fact $\mu_g^\om(\BB)<\infty$ and $\frac{\om^{n*}(z)}{|z|^{zn}}\approx \om^*(z)$ as $|z|\to 1$,
by Lemma \ref{1210-3} we have
$$\mu_g^\om (R_{k,j})\approx  \om^{*}(c_{k,j})(1-|c_{k,j}|)^{n-1} \int_{R_{k,j}}|\Re g(z)|^2\frac{dV(z)}{(1-|z|)^{n-1}}.$$
By Corollary \ref{0829-2} and Lemma \ref{0829-1}, $\T_{\mu_g^\om}\in \mathcal{S}_{\frac{p}{2}}(A_\om^2)$.
By Theorem 1.26 in \cite{zhu},  $T_g\in \mathcal{S}_p(A_\om^2)$.
When $0< p\leq n$ and $g$ is a constant, it is obvious that $T_g\in \mathcal{S}_p(A_\om^2)$.

Conversely, we assume that $T_g\in \mathcal{S}_p(A_\om^2)$.
Using Corollary \ref{0829-2} and Lemma \ref{0829-1}, we get that $g\in B_p$. Moreover, when $ p\leq n$, $B_p=\CC$.
The proof is complete.
\end{proof}

\noindent {\bf Acknowledgments.} 
 The corresponding author was supported by the Macao Science and Technology Development Fund (No.186/2017/A3) and NNSF of China (No. 11720101003).

\end{document}